\numberwithin{equation}{section}
\newcommand{\id}{{\rm id}}
\newcommand{\e}{\varepsilon}
\newcommand{\Pb}{\mathbb{P}}
\newcommand{\E}{\mathbb{E}}
\newcommand{\R}{\mathbb{R}}
\newcommand{\N}{\mathbb{N}}
\newcommand{\Z}{\mathbb{Z}}
\newcommand{\eps}{\varepsilon}
\newcommand{\Or}{{\cal O}}
\newtheorem{prop}{Proposition}[section]
\newtheorem{assumption}[prop]{Assumption}
\newtheorem{thm}[prop]{Theorem}
\newtheorem{lem}[prop]{Lemma}
\newtheorem{cor}[prop]{Corollary}
\newtheorem{cla}[prop]{Claim}
\newtheorem{rem}[prop]{Remark}
\newenvironment{remark}{\begin{rem}\normalfont}{\end{rem}}
\title{TASEP with a moving wall}
\author{Alexei Borodin\thanks{Department of Mathematics, Massachusetts Institute of Technology, 77 Massachusetts Ave., Cambridge, MA 02139, USA. E-mail: {\tt borodin@math.mit.edu }}
\and Alexey Bufetov\thanks{Institute of Mathematics, Leipzig University, Augustusplatz 10, 04109 Leipzig, Germany. E-mail: {\tt alexey.bufetov@gmail.com}}
\and Patrik L.\ Ferrari\thanks{Institute for Applied Mathematics, Bonn University, Endenicher Allee 60, 53115 Bonn, Germany. E-mail: {\tt ferrari@uni-bonn.de}}}
\date{November 2, 2021}
\begin{document}
\maketitle
\begin{abstract}
We consider a totally asymmetric simple exclusion on $\mathbb{Z}$ with the
step initial condition, under the additional restriction that the first
particle cannot cross a deterministally moving wall. We prove that
such a wall may induce asymptotic fluctuation distributions of particle
positions of the form
$$
\Pb\Big(\sup_{\tau\in \R}\{\textrm{Airy}_2(\tau) -g(\tau)\}\leq S\Big)
$$
with arbitrary barrier functions $g$. This is the same class of distributions
that arises as one-point asymptotic fluctuations of TASEPs with arbitrary
initial conditions. Examples include Tracy-Widom GOE and GUE distributions, as
well as a crossover between them, all arising from various particles behind a
linearly moving wall.

We also prove that if the right-most particle is second class, and a linearly
moving wall is shock-inducing, then the asymptotic distribution of the
position of the second class particle is a mixture of the uniform distribution
on a segment and the atomic measure at its right end.
\end{abstract}

\sloppy

\section{Introduction} \label{SectIntro}

The totally asymmetric simple exclusion process (or \emph{TASEP}, for short) is
a prototypical example of an interacting particle system in one
space dimension. It consists of particles moving within the one-dimensional
lattice $\mathbb{Z}$ in continuous time, with each site of $\mathbb{Z}$
occupied by at most one particle (the \emph{exclusion} constrained). Each
particle carries its individual exponential clock of rate one (all clocks are
independent), and when that clocks rings, the particle attempts to jump to the
right by one unit. It succeeds if the target site is empty, and it stays put if
it is not. The TASEP was introduced into mathematics by Spitzer in~\cite{Spi70},
and since then it has been a subject of extensive studies.

The TASEP evolution can be rephrased as a model of random interface growth, and
as such it is arguably the simplest representative of the (conjectural)
Kardar-Parisi-Zhang universality class of random growth models in
(1+1)-dimensions~\cite{KPZ86}. For example, it was the first member of the
class for which one-point fluctuations were asymptotically analyzed (Johansson
\cite{Jo00b}), and the complete Markovian evolution of the asymptotic
fluctuation processes was obtained (Matetski-Quastel-Remenik~\cite{MQR17}).

Among all possible initial conditions for the TASEP, the translation-invariant
stationary ones play a special role. They depend on a single parameter
$\rho\in (0,1)$ called the \emph{density}, and they place a particle at each
site of $\mathbb{Z}$ independently with probability $\rho$. One shows that
such a Bernoulli measure on particle configurations in $\mathbb{Z}$ is stable
under the TASEP evolution, and the trajectory of each particle is a simple
random walk in continuous time that moves up by one with rate $(1-\rho)$.
\footnote{The latter fact is usually referred to as \emph{Burke's
theorem}, see Burke~\cite{Bur56} for the original statement in terms of a
queuing system, and Spitzer~\cite[Example 3.2]{Spi70}, Ferrari-Fontes
\cite{FF96} for interpretations in terms of particle systems.}

Apart from the stationary initial data, the simplest initial
configuration is probably the one that is often called the \emph{step initial
condition}: The particles occupy all the sites marked by $\mathbb{Z}_{\le
0}=\{0,-1,-2,\dots\}$. This was the one considered in~\cite{Jo00b}, and it was
also the first for which a (nontrivial) law of large numbers type behavior was
obtained by Rost in~\cite{R81}.

The purpose of the present paper is to investigate the fluctuations of the
TASEP with the step initial condition, with an additional constraint -- the
right-most particle is forbidden to cross a deterministically moving barrier
that we call \emph{a wall}.

Similar setups have been considered before. However, in previous works the
movement of the wall was always \emph{random}, and, to our knowledge, a
deterministically moving wall has not been considered before.

Borodin-Ferrari-Sasamoto~\cite{BFS09} studied the situation when the TASEP is
initialized with the particles occupying every second site in $\mathbb{Z}_{\le
0}$, and right-most particle has a different (slower) jump rate. Equivalently,
one can think of the positive semi-axis $\Z_{>0}$ initialized by a
Bernoulli measure. A few different fluctuation processes arose in the large
time limit, depending on the speed of the right-most particle and the region of
the lattice, cf.~\cite[Section 2]{BFS09}. One interesting feature was a
fluctuation description of a \emph{shock}. Shocks were investigated on a deeper
level in subsequent works by Ferrari-Nejjar~\cite{FN13, FN16, N17, Fer18, N19,
FN19} and Quastel-Rahman~\cite{QR18}. Other types of evolution of the first
particle were considered (and other initial condition on $\mathbb{Z}_{\ge 0}$ as
well), with fluctuations of different order and different distributions.
However, the randomness of the wall movement was always an essential
contributor to the fluctuations of the TASEP particles, once they became
affected by the wall.

Our main interest in this work was to investigate the situation when the
movement of the wall was not producing any randomness whatsoever, and to see
what kind of asymptotic fluctuations one would be able to observe in such a
case.

Our main result says that by choosing an appropriate (deterministic) movement
of the wall, one can create distributions of the asymptotic fluctuations of
a particle in the bulk of the system that span essentially the same class as
the ones arising from one-point asymptotic fluctuations of the TASEP with
varying (deterministic) initial conditions (obtained previously in
\cite{MQR17}). A bit more precisely, we show that for any piecewise continuous
function $g$ on the real line satisfying $g(\tau)\ge const+\tau^2/2$, $\tau\in
\mathbb{R}$, the probability that a certain
particle at time $t>0$ is to the right of position $\xi t- Sct^{1/3}$, for
suitable constants $\xi$ and $c$, tends, as $t\to\infty$, to
\begin{equation}
\label{eq:intro1}
\Pb\Big(\sup_{\tau\in \R}\{{\cal A}_2(\tau) -g(\tau)\}\leq S\Big), \qquad
S\in \mathbb{R},
\end{equation}
where ${\cal A}_2$ is the Airy$_2$ process. The corresponding movement of the
wall is governed by the function $g$ in a space-time window of
size $\sim t^{1/3}\times t^{2/3}$ which is determined by the particle that we
focus on; outside of that critical window the position of the wall is required
to satisfy an (explicit) macroscopic inequality that limits its influence on
the chosen particle.

An exact formulation of the claim above can be found
in Theorem \ref{ThmOnePointSituation} below.

The conditions on the functions $g$ can be relaxed, although we do not pursue
that in the present paper. Our result also does not cover all possible
fluctuation scenarios that a deterministic wall can create. For example, wall
trajectories may have multiple critical windows that affect the tagged
particle, and those are not considered in this text.

For a discussion of how the variational formula \eqref{eq:intro1} is
related to the \emph{Airy sheet} and the \emph{KPZ fixed point}, see
\cite[Section 4.5]{MQR17} and references therein. For some of the earlier
works where such variational formulas played an important role see Johansson
\cite{Jo03}, Quastel-Remenik~\cite{QR13, QR16}, Baik-Liu~\cite{BL13},
Corwin-Liu-Wang~\cite{CLW16}, Chhita-Ferrari-Spohn~\cite{CFS16},
Ferrari-Occelli~\cite{FO17}.

To give a concrete application, consider the wall that starts at the origin and
moves to the right with constant speed $v<1$. Then we show, in Section
\ref{secConstantSpeed} below, that our main result implies the following
behavior. Denote by $x_1(t)>x_2(t)>\dots$ the positions of our TASEP particles
at time $t\ge 0$; note that $x_n(0)=-n+1$ for all $n\ge 1$. Then the asymptotic
fluctuations of $x_{\alpha t}(t)$ converge, on a $t^{1/3}$-scale and as
$t\to\infty$, to the GOE Tracy-Widom distribution $\mathrm{F_1}$ for
$\alpha<(1-v)^2$, to the GUE Tracy-Widom distribution $\mathrm{F_2}$ for
$\alpha>(1-v)^2$, and to the crossover distribution $\mathrm{F_{2\to 1;0}}$
describing a section of the Airy$_{2\to 1}$ process of~\cite{BFS07} for
$\alpha=(1-v)^2$.

Our proof is based on two relatively recent advances.

One is a so-called \emph{color-position symmetry} of the multi-species TASEP.
The symmetry itself
goes back to the work of Angel-Holroyd-Romik~\cite{AHR09}, see also
Amir-Angel-Valk\'{o}~\cite{AAV11}, Borodin-Wheeler~\cite{BW18}, Borodin-Bufetov
\cite{BB19}, Bufetov~\cite{B20}, Galashin~\cite{G20} for the development of
its understanding and some of its applications. For our model with a wall,
we show that this symmetry implies that the distribution of the position of a
given particle is identical to the distribution of the position of another
particle in another TASEP with the step initial condition (and without a wall),
conditioned on the fact that this other particle remains ahead of a
deterministally moving barrier.

The second part of our argument is a precise control of the whole trajectory of
a given TASEP particle achieved via the technique of \emph{backwards paths}
introduced in~\cite{Fer18}, see also~\cite{FN19}. Backwards paths are random
lattice paths that mimic the behavior of the characteristics of
exclusion processes.\footnote{More exactly, those are the characteristics of
the inviscid Burgers equation that describes the law of large numbers behavior
of the exclusion processes.} The control is realized by fine comparisons of the
backwards paths for the process at hand with those of stationary TASEPs, since
the latter ones are easier to estimate. Backwards paths could be
viewed as analogs of geodesics in last passage percolation models.

Employing this technique for the trajectory of a TASEP particle conditioned to
stay above a barrier, which we obtain via the color-position symmetry from the
original TASEP with a wall, ultimately leads to the limiting distributions
\eqref{eq:intro1}, where the function $g$ is related to the movement of the
barrier and, consequently, to the movement of the wall in the original
TASEP.

We also offer another application of our approach by considering
a TASEP with a linearly moving wall whose right-most particle is second class.
Second class particles are known to track the characteristics and, in
particular, stick to the shock locations. For the step initial condition
without a wall, Ferrari-Kipnis~\cite{FK95} proved that at a large time $t$, the
right-most second class particle is asymptotically uniformly distributed on the
segment $(-t,t)$.\footnote{This corresponds to the fact that in the case of the
step initial condition, the characteristics that pass through the origin form a
\emph{rarefaction fan}.} We prove, in Theorem \ref{th:sec-class} below, that
once one adds a shock-inducing wall that at time $t\in [0,T]$ is at position
$cT+vt$ with $c>0$, $0\le v<1$, and $v+c\le 1$ (the latter condition ensures
that the wall nontrivially interacts with the particles), the asymptotic
distribution of the second class particle is a mixture of the uniform
distribution on the shortened segment $\left(-t,t(-1+2v+2\sqrt{c(1-v)})\right)$
and an atomic measure at the right end of this segment.

The paper is organized as follows. Section \ref{SectIntro} is the introduction.
In Section \ref{SectTightness} we use backwards paths and comparisons with
stationary TASEPs to prove weak process-level convergence of a tagged TASEP
particle to the Airy$_2$ process at the fluctuation scale. Section
\ref{SectRelationWall} explains the application of the color-position symmetry
to TASEPs with a wall. In Section \ref{SectScalingLimits} we prove our main
result by combining the results of the two previous sections. The final Section
\ref{SecSecondClass} contains a proof of the second class particle asymptotics.

\bigskip
\noindent\textbf{Acknowledgements.} A.~Borodin was partially supported by the
NSF grant DMS-1853981, and the Simons Investigator program. The work of P.L. Ferrari
was partly funded by the Deutsche Forschungsgemeinschaft (DFG, German Research Foundation) under Germany's Excellence Strategy - GZ 2047/1,
projekt-id 390685813 and by the Deutsche Forschungsgemeinschaft (DFG, German Research Foundation) - Projektnummer 211504053 - SFB 1060.

\newpage

\section{Tightness of the scaled particle process}\label{SectTightness}
In what follows, whenever we consider TASEP with different initial conditions, we always assume that they are coupled by the basic coupling~\cite{Li85b}. In other words, the evolution of the different processes occurs using the same jump trials. It is useful to have in mind the graphical construction of TASEP~\cite{Har65,Lig76}, see also Figure~\ref{FigCharacteristics} below. We describe TASEP not by the occupation variables of the sites, but rather by the positions of labelled particles. For all times $t$ and label $k\in \Z$, denote the position of the particle with label $k$ at time $t$ by $x_k(t)$. We use the convention $x_k(t)>x_{k+1}(t)$.

The goal of this section is to obtain the following results based on the particle representation (without having to make a detour through the last passage percolation model):
\begin{itemize}
\item In Proposition~\ref{propTightness} we show that for the step initial condition, that is $x_k(0)=-k+1$, $k\geq 1$, the rescaled particle process converges weakly to the Airy$_2$ process. As convergence of finite-dimensional distributions is known, we need a modulus of continuity estimate to get tightness.
\item In Theorem~\ref{thmComparison} we show that the increments of the particle process can be bounded by the ones of two Poisson processes, which originate from two TASEPs with stationary initial conditions. This result is central for getting the estimate on the modulus of continuity, and also for bounding the increments in a mesoscopic scale for the scaling limit of Section~\ref{SectScalingLimits}.
\end{itemize}
The comparison of TASEPs requires careful estimates, which will be done using the formalism of backwards paths discussed below. Backwards paths have been introduced in~\cite{Fer18}, see also~\cite{FN19}.

\subsection{The backwards path}
We define the following process running \emph{backwards} in time. First we define a process on the labels starting from time $t$ backwards to time $0$, denoted $N(t\downarrow \cdot)$, as follows:
\begin{itemize}
\item we set $N(t\downarrow t)=N$,
\item the jumps happen at times when a TASEP jump trial is suppressed: if at time\footnote{$\hat t^+$ means a time moment that is infinitesimally larger than $\hat t$.} $\hat t^+$ we have $N(t\downarrow \hat t^+)=n$ and at time $\hat t$ a jump of the TASEP particle $n$ is suppressed by the presence of particle $n-1$, then we define $N(t\downarrow \hat t)=n-1$.
\end{itemize}
The backwards path associated to the label $N$ at time $t$ is defined by setting
\begin{equation}
\pi_{N,t}=\{x_{N(t\downarrow u)}(u), u\in[0,t]\},
\end{equation}
see Figure~\ref{FigSpaceCut}.
\begin{figure}[t!]
\begin{center}
\psfrag{x}[lc]{$x$}
\psfrag{t}[lb]{$t$}
\psfrag{tau}[lb]{$\tau$}
\includegraphics[height=5cm]{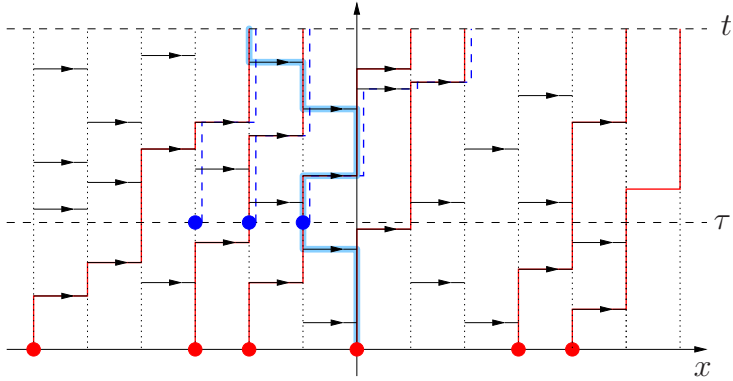}
\caption{The red solid lines are the trajectories of $\{x_n(s),0\leq s\leq t, \textrm{ all }n\}$. The thick light-blue line is the trajectory of $\{x_{N(s)}(s),0\leq s\leq t\}$. The blue dots and the dashed blue lines are the trajectories of the particles after setting step initial condition at time $\tau$.}
\label{FigSpaceCut}
\end{center}
\end{figure}

We denote by $x_n^{{\rm step},Z}(\tau,t)$ the particle process at time $t$
starting at time $\tau$ from the step initial condition with the right-most
particle at position $Z \in \Z$, i.e., with the initial condition
\begin{equation}
x_{n}^{{\rm step},Z}(\tau,t=\tau)=-n+Z+1, \quad n\geq 1.
\end{equation}
For $Z=0$ and $\tau=0$ we simply write $x_{n}^{\rm{step}}(t)$. Also, define $y_n^{Z}(\tau,t)=x_{n}^{{\rm step},Z}(\tau,t)-Z$.

In Proposition~3.4 of~\cite{Fer18} it is shown that
\begin{equation}\label{eq1}
x_N(t)=x_{N(t\downarrow\tau)}(\tau)+y^{x_{N(t\downarrow\tau)}(\tau)}_{N-N(t\downarrow\tau)+1}(\tau,t).
\end{equation}
Furthermore, for any other $n\leq N$ it holds\footnote{For TASEP, we thus have $x_N(t)=\inf_{n\leq N} \left\{ x_{n}(\tau)+y^{x_{n}(\tau)}_{N-n+1}(\tau,t)\right\}$, as previously proven by Sepp\"al\"ainen with a different approach in~\cite{Sep98c}.}
\begin{equation}\label{eq2}
x_N(t)\leq x_{n}(\tau)+y^{x_{n}(\tau)}_{N-n+1}(\tau,t).
\end{equation}
This can be also read in another way (the role of $n$ is taken by $N(t\downarrow\tau)$ and the role of $N$ is taken by $n$): for any $n\geq N(t\downarrow\tau)$,
\begin{equation}\label{eq3}
x_n(t)\leq x_{N(t\downarrow\tau)}(\tau)+y^{x_{N(t\downarrow\tau)}(\tau)}_{n-N(t\downarrow\tau)+1}(\tau,t).
\end{equation}

\begin{remark}
The important properties of the backwards path $\pi_{N,t}$ are \eqref{eq1}, \eqref{eq2}, \eqref{eq3}, and by construction it has almost surely only steps of size $\pm 1$.
\end{remark}

By construction of the backwards path, the position $x_N(t)$ is unchanged if particles strictly to the right of the path are moved to the right and particles strictly to the left are moved to the right but still keeping the exclusion constraint; the extreme situation is to create a step initial condition at position of the backwards path (which gives the formula \eqref{eq1}). If the backwards path ends weakly to the left of $0$, then we can move to the right particles initially strictly to the right of $0$ and get the initial configuration of a process we call $x^{{\rm right}}$, while if the backwards path ends strictly to the right of $0$, we can move to the right particles weakly to the left of the origin leading to a process we call $x^{{\rm left}}$. Therefore,
\begin{equation}\label{eqLeftRightDecomposition}
x_N(t)=\min\{x^{{\rm left}}_N(t),x^{{\rm right}}_N(t)\},
\end{equation}
for arbitrary $N$.

\subsection{Comparison inequalities}
For $t_1<t_2$, we would like to bound the increments of $x_N(t_2)-x_N(t_1)$ with
the increments of another process, namely $\tilde x_M(t_2)-\tilde x_M(t_1)$ for
suitable $M$ and $\tilde x$. The process $\tilde x$ will be taken to be a
stationary TASEP with some density $\rho$, because in this case for each fixed
$n$, $(\tilde x_n(t),t\geq 0)$ is a Poisson process with intensity $1-\rho$
(one-sided random walk with jumps to the right at rate $1-\rho$). This property
is coming from Burke's theorem~\cite{Bur56} and it was observed in~\cite{Spi70}
(Example~3.2), see also~\cite{FF96}.

\begin{prop}\label{propComparison1}
Let $x(t)$ and $\tilde x(t)$ be two TASEPs coupled by the basic coupling. Consider the tagged particle processes $x_N(t)$ and $\tilde x_M(t)$ on a time interval $[t_1,t_2]$. Assume that
\begin{equation}\label{eqchoiceM}
\tilde x_M(t_1)\leq x_N(t_1).
\end{equation}
Define the event
\begin{equation}\label{eqEvent}
{\cal E}_{t_1,M}^{t_2,N}=\{\exists \tau\leq t_1\, | \, x_{N(t_2\downarrow\tau)}(\tau)=\tilde x_{M(t_1\downarrow\tau)}(\tau)\}.
\end{equation}
If ${\cal E}_{t_1,M}^{t_2,N}$ takes place, then
\begin{equation}
x_N(t_2)-x_N(t_1)\geq \tilde x_M(t_2)-\tilde x_M(t_1).
\end{equation}
\end{prop}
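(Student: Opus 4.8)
The plan is to exploit the structure encoded in the backwards path and the identity \eqref{eq1}, comparing the backwards path of $x$ launched from $(t_2,N)$ with that of $\tilde x$ launched from $(t_1,M)$, run under the same basic coupling. First I would apply \eqref{eq1} at time $\tau$ to both processes: writing $a=N(t_2\downarrow\tau)$ and $b=M(t_1\downarrow\tau)$, we get $x_N(t_2)=x_a(\tau)+y^{x_a(\tau)}_{N-a+1}(\tau,t_2)$ and, analogously for the TASEP $\tilde x$ run backward from $(t_1,M)$, $\tilde x_M(t_1)=\tilde x_b(\tau)+\tilde y^{\tilde x_b(\tau)}_{M-b+1}(\tau,t_1)$, where $y$ and $\tilde y$ are built from the same Harris clocks. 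On the event ${\cal E}_{t_1,M}^{t_2,N}$ there is a time $\tau\le t_1$ at which $x_a(\tau)=\tilde x_b(\tau)$; fix such a $\tau$. Then the two step-initial-condition processes $x^{{\rm step},Z}(\tau,\cdot)$ with $Z=x_a(\tau)=\tilde x_b(\tau)$ are literally the \emph{same} process, so $y$ and $\tilde y$ refer to one and the same family of trajectories started from step data at $(\tau,Z)$.

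Next I would bound $x_N(t_1)$ and $\tilde x_M(t_2)$ from the correct sides. For the lower bound on $x_N(t_1)$ I use \eqref{eq2} with the same $\tau$ and with the label $a=N(t_2\downarrow\tau)$: this gives (after a short argument that $N(t_1\downarrow\tau)\le N(t_2\downarrow\tau)$, which holds because the backwards label process is monotone and $x_N(t_1)\le x_N(t_2)$ forces the earlier-time path to sit weakly below) the inequality $x_N(t_1)\le x_a(\tau)+y^{x_a(\tau)}_{N-a+1}(\tau,t_1)$ --- actually the cleaner route is to run the $x$-backwards path from $(t_1,N)$, land at some label $a'\le a$, and use monotonicity of $y$ in its lower index together with $x_{a'}(\tau)\le x_a(\tau)$. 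Either way one arrives at $x_N(t_1)\le x_a(\tau)+y^{x_a(\tau)}_{N-a+1}(\tau,t_1)$, hence
\begin{equation}
x_N(t_2)-x_N(t_1)\ge y^{Z}_{N-a+1}(\tau,t_2)-y^{Z}_{N-a+1}(\tau,t_1),
\end{equation}
i.e.\ the increment of $x$ dominates the increment, over $[t_1,t_2]$, of the $(N-a+1)$-st particle of the step process started at $(\tau,Z)$. Symmetrically, for $\tilde x_M(t_2)$ I use \eqref{eq2} for the process $\tilde x$ with time $t_2$ and label $b=M(t_1\downarrow\tau)$ to get $\tilde x_M(t_2)\le \tilde x_b(\tau)+\tilde y^{\tilde x_b(\tau)}_{M-b+1}(\tau,t_2)=Z+y^{Z}_{M-b+1}(\tau,t_2)$, while the equality $\tilde x_M(t_1)=Z+y^Z_{M-b+1}(\tau,t_1)$ is exact, giving $\tilde x_M(t_2)-\tilde x_M(t_1)\le y^{Z}_{M-b+1}(\tau,t_2)-y^{Z}_{M-b+1}(\tau,t_1)$.

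It remains to compare the two increments of the \emph{same} step process at possibly different labels $N-a+1$ and $M-b+1$. Here the hypothesis \eqref{eqchoiceM}, $\tilde x_M(t_1)\le x_N(t_1)$, enters: it translates, via the two displayed identities at time $t_1$, into $y^Z_{M-b+1}(\tau,t_1)\le y^Z_{N-a+1}(\tau,t_1)$, which for the deterministic step configuration at $(\tau,Z)$ forces $N-a+1\le M-b+1$ (lower-indexed particles are to the right). I then invoke the basic-coupling monotonicity inside the single step process: for a fixed realization of the clocks, the increment over $[t_1,t_2]$ of a particle is at least the increment of any particle to its left that is not blocked by it --- more precisely, I will use the standard fact (a consequence of the graphical construction, cf.\ the attractiveness/second-class-particle ordering used throughout) that along a backwards path the traversed increments are monotone in the label in the required direction, so $y^Z_{N-a+1}(\tau,t_2)-y^Z_{N-a+1}(\tau,t_1)\ge y^Z_{M-b+1}(\tau,t_2)-y^Z_{M-b+1}(\tau,t_1)$. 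Chaining the three inequalities yields the claim. The main obstacle I anticipate is precisely this last monotonicity-of-increments statement for different labels of one TASEP: it is not literally ``attractiveness'' and must be argued carefully from the backwards-path construction (the increment of particle $j$ equals the number of its successful jumps, and successful jumps of a right particle dominate those of a left particle over the same window once they are at the coupling time in step configuration), together with making sure the choice of the time $\tau$ from ${\cal E}_{t_1,M}^{t_2,N}$ is consistent with the labels produced by the two backwards paths.
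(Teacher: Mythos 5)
Your proposal reproduces the paper's own argument step by step: apply \eqref{eq1} along both backwards paths at the meeting time $\tau$ (the basic coupling identifies the two restarted step processes launched from $(\tau,x^*)$), use \eqref{eq2} for the two cross-time bounds, deduce $N_1:=N-N(t_2\downarrow\tau)+1\le M_1:=M-M(t_1\downarrow\tau)+1$ from \eqref{eqchoiceM} (you do this directly from $y^{x^*}_{M_1}(\tau,t_1)\le y^{x^*}_{N_1}(\tau,t_1)$ and order preservation; the paper does it by contradiction, but it is the same content), and finally compare the step-process increments at labels $N_1$ and $M_1$. This is exactly the paper's route, and you have correctly singled out the last comparison as the only step that genuinely needs an argument.

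You should, however, push much harder on that last step rather than appealing to a ``standard fact.'' What is actually needed is
\begin{equation*}
y^{x^*}_{N_1}(\tau,t_2)-y^{x^*}_{N_1}(\tau,t_1)\;\ge\; y^{x^*}_{M_1}(\tau,t_2)-y^{x^*}_{M_1}(\tau,t_1)\qquad\textrm{for }N_1\le M_1 ,
\end{equation*}
i.e.\ that the jump counts of the restarted step process \emph{over the window $[t_1,t_2]$ with $t_1>\tau$} are nonincreasing in the label; equivalently, that the gap $y^{x^*}_{N_1}(\tau,\cdot)-y^{x^*}_{M_1}(\tau,\cdot)$ is nondecreasing on $[\tau,t_2]$. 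This is \emph{not} the same as the true and easy statement that cumulative displacements measured \emph{from the step time $\tau$} are nonincreasing in label (which only says the gap stays $\ge M_1-N_1$, not that it is monotone). A gap between two particles of a step-initial TASEP can grow and then shrink: the right one can burn through all the rings ahead of it during $[\tau,t_1]$ and then stall, while the left one catches up during $[t_1,t_2]$. So ``the increment of a particle dominates the increment of any particle to its left'' is false over a generic sub-interval and cannot be cited as attractiveness or as a backwards-path monotonicity; the paper's own one-line justification of this step has the same difficulty. To close the gap you need to feed in more than $N_1\le M_1$ --- e.g.\ revisit how the location of the meeting time $\tau$ (in particular, what happens when $\tau<t_1$ versus $\tau=t_1$, and how the four identities \eqref{eq1.9}--\eqref{eq1.11b} interact as equalities) constrains the restarted step process on $[\tau,t_1]$. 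That is where the real content of the proposition lies, and your proposal currently treats it as a footnote.
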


\begin{proof}
Let us consider the backwards path $\pi_{N,t_2}$ associated with $x_N$ starting at time $t_2$ and the backwards path $\tilde \pi_{M,t_1}$ associated with $\tilde x_M$ starting at time $t_1$. Assume ${\cal E}_{t_1,M}^{t_2,N}$ is satisfied and set $x^*=\tilde x_{M(t_1\downarrow\tau)}(\tau)=x_{N(t_2\downarrow\tau)}(\tau)$. Then by \eqref{eq1} we have
\begin{equation}\label{eq1.9}
\begin{aligned}
x_N(t_2)&=x^*+y^{x^*}_{N-N(t_2\downarrow\tau)+1}(\tau,t_2),\\
\tilde x_M(t_1)&=x^*+y^{x^*}_{M-M(t_1\downarrow\tau)+1}(\tau,t_1),
\end{aligned}
\end{equation}
and by \eqref{eq2}
\begin{equation}\label{eq1.11b}
\begin{aligned}
x_N(t_1)&\leq x^*+y^{x^*}_{N-N(t_2\downarrow\tau)+1}(\tau,t_1),\\
\tilde x_M(t_2)&\leq x^*+y^{x^*}_{M-M(t_1\downarrow\tau)+1}(\tau,t_2).
\end{aligned}
\end{equation}
\begin{figure}[t!]
\begin{center}
\psfrag{xn}[cb]{$x_N$}
\psfrag{xm}[cb]{$\tilde x_M$}
\psfrag{t1}[lb]{$t_1$}
\psfrag{t2}[lb]{$t_2$}
\psfrag{tau}[lb]{$\tau$}
\psfrag{time}[lb]{time}
\psfrag{space}[rb]{space}
\psfrag{(a)}[ct]{(a)}
\psfrag{(b)}[ct]{(b)}
\includegraphics[height=7cm]{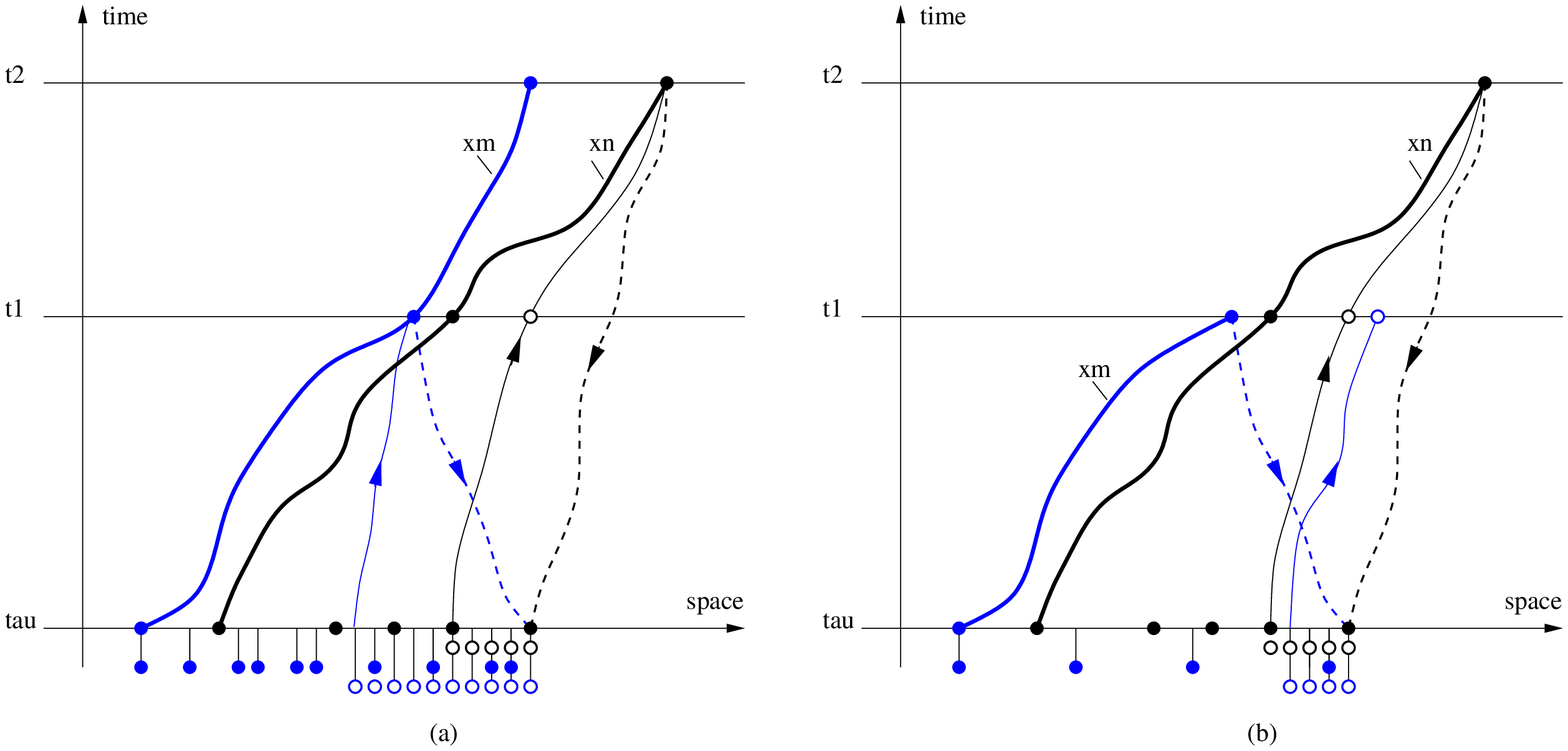}
\caption{The \emph{thick} solid lines are the evolution of particle $x_N$ and $\tilde x_M$, while the backwards path are dashed. The \emph{thin} solid lines are the evolution of particle $N$ (resp.\ $M$) after restarting with step initial condition at time $\tau$ and position $x^*$ for $x$ (resp.\ $\tilde x$). The solid dots are the particle configurations at time $\tau$ and the empty dots are the configurations after resetting to the step initial condition at time $\tau$. The picture (a) is for the case $N-N(t_2\downarrow \tau)+1\leq M-M(t_1\downarrow \tau)+1$, while (b) for the case $N-N(t_2\downarrow \tau)+1> M-M(t_1\downarrow \tau)+1$. The contradiction in (b) is that the blue empty dot at time $t_1$ should be at position $\tilde x_M(t_1)$, but it is also to the right of the black empty dot at time $t_1$.}
\label{FigComparison}
\end{center}
\end{figure}

Assume for a moment that
\begin{equation}\label{eq1.11}
N-N(t_2\downarrow\tau)+1\leq M-M(t_1\downarrow\tau)+1.
\end{equation}
Then, combining \eqref{eq1.9} and \eqref{eq1.11b} we get
\begin{equation}
\begin{aligned}
x_N(t_2)-x_N(t_1)&\geq  y^{x^*}_{N-N(t_2\downarrow\tau)+1}(\tau,t_2)-y^{x^*}_{N-N(t_2\downarrow\tau)+1}(\tau,t_1)\\
&\geq y^{x^*}_{M-M(t_1\downarrow\tau)+1}(\tau,t_2)-y^{x^*}_{M-M(t_1\downarrow\tau)+1}(\tau,t_1)\\
&\geq \tilde x_M(t_2)-\tilde x_M(t_1).
\end{aligned}
\end{equation}
The second inequality holds due to \eqref{eq1.11} and the fact that, for the step initial condition, particles starting to the left of particle $N$ can not move more than the distance traveled by particle $N$.

It remains to prove \eqref{eq1.11}. Assume that it is not true. Consider the evolution of the particles obtained by setting the step initial condition at time $\tau$ with the right-most particle at $x^*$, which we label as particle number $1$; i.e., we look at the process $\{x^*+y^{x^*}_{n}(\tau,t)\}_{n\geq 1}$. By assumption, particle $N_1=N-N(t_2\downarrow\tau)+1$ is to the left of particle $M_1=M-M(t_1\downarrow\tau)+1$. By \eqref{eq1.11b} particle $N_1$ at time $t_1$ is on $[x_N(t_1),\infty)$, and thus particle $M_1$ at time $t_1$ is strictly to the right of $x_N(t_1)$. In formulas,
\begin{equation}
x_N(t_1)\leq x^*+y^{x^*}_{N_1}(\tau,t_1)< x^*+y^{x^*}_{M_1}(\tau,t_1) = \tilde x_M(t_1),
\end{equation}
where the last equality comes from \eqref{eq1.9}. But by assumption \eqref{eqchoiceM} we have $\tilde x_M(t_1) \in (-\infty,x_N(t_1)]$, which is a contradiction.
\end{proof}

Interchanging the roles of $x$ and $\tilde x$ we get the following result.
\begin{prop}\label{propComparison2}
Let $x(t)$ and $\hat x(t)$ be two TASEPs under the basic coupling. Consider the tagged particles processes $x_N(t)$ and $\hat x_P(t)$ on the interval $[t_1,t_2]$. Assume that
\begin{equation}\label{eqchoiceP}
x_N(t_1)\leq \hat x_P(t_1).
\end{equation}
Define the event
\begin{equation}\label{eqEventB}
{\cal \hat E}_{t_1,N}^{t_2,P}=\{\exists \tau\leq t_1\, | \, x_{N(t_1\downarrow\tau)}(\tau)=\hat x_{P(t_2\downarrow\tau)}(\tau)\}.
\end{equation}
If ${\cal \hat E}_{t_1,N}^{t_2,P}$ is satisfied, then
\begin{equation}
x_N(t_2)-x_N(t_1)\leq \hat x_P(t_2)-\hat x_P(t_1).
\end{equation}
\end{prop}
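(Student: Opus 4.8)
The plan is to recognize that Proposition~\ref{propComparison2} is the mirror image of Proposition~\ref{propComparison1} under exchanging the two coupled TASEPs, and to obtain it either by a direct relabeling or --- what I would actually write out --- by re-running the argument of Proposition~\ref{propComparison1} verbatim with the obvious change of names. Concretely, the process that is \emph{ahead at time $t_1$} now takes the role that $x_N$ played there (so its backwards path is rooted at the \emph{later} time $t_2$); by \eqref{eqchoiceP} that process is $\hat x_P$, and $x_N$ takes the role of $\tilde x_M$.

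The first step is the set-up on the event. Suppose ${\cal \hat E}_{t_1,N}^{t_2,P}$ occurs, fix $\tau\le t_1$ with $x^*:=x_{N(t_1\downarrow\tau)}(\tau)=\hat x_{P(t_2\downarrow\tau)}(\tau)$, and consider the backwards path $\pi_{N,t_1}$ of $x_N$ rooted at $t_1$ together with the backwards path $\hat\pi_{P,t_2}$ of $\hat x_P$ rooted at $t_2$; both pass through $x^*$ at time $\tau$. Writing $N_1:=N-N(t_1\downarrow\tau)+1$ and $P_1:=P-P(t_2\downarrow\tau)+1$, formula \eqref{eq1} applied at the root of each backwards path gives the identities $x_N(t_1)=x^*+y^{x^*}_{N_1}(\tau,t_1)$ and $\hat x_P(t_2)=x^*+y^{x^*}_{P_1}(\tau,t_2)$, while \eqref{eq2} (with $n=N(t_1\downarrow\tau)\le N$, respectively $n=P(t_2\downarrow\tau)\le P$) applied at the other time gives the bounds $x_N(t_2)\le x^*+y^{x^*}_{N_1}(\tau,t_2)$ and $\hat x_P(t_1)\le x^*+y^{x^*}_{P_1}(\tau,t_1)$. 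From these four relations one reads off $x_N(t_2)-x_N(t_1)\le y^{x^*}_{N_1}(\tau,t_2)-y^{x^*}_{N_1}(\tau,t_1)$ and $\hat x_P(t_2)-\hat x_P(t_1)\ge y^{x^*}_{P_1}(\tau,t_2)-y^{x^*}_{P_1}(\tau,t_1)$, so it remains only to compare the two right-hand sides.

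The second step is the label inequality $P_1\le N_1$, proved by contradiction exactly as in Proposition~\ref{propComparison1}: if $P_1>N_1$, then in the step process started at time $\tau$ with right-most particle at $x^*$ the particle labelled $P_1$ stays strictly to the left of the particle labelled $N_1$, hence $\hat x_P(t_1)\le x^*+y^{x^*}_{P_1}(\tau,t_1)< x^*+y^{x^*}_{N_1}(\tau,t_1)=x_N(t_1)$, contradicting \eqref{eqchoiceP}. Given $P_1\le N_1$, the same monotonicity used in the proof of Proposition~\ref{propComparison1} --- for the step initial condition a particle does not travel more than any particle starting to its right --- yields $y^{x^*}_{N_1}(\tau,t_2)-y^{x^*}_{N_1}(\tau,t_1)\le y^{x^*}_{P_1}(\tau,t_2)-y^{x^*}_{P_1}(\tau,t_1)$, and chaining this with the two inequalities from the first step gives $x_N(t_2)-x_N(t_1)\le \hat x_P(t_2)-\hat x_P(t_1)$, which is the assertion.

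There is no genuine obstacle here: the statement is the symmetric counterpart of Proposition~\ref{propComparison1} and the proof is word-for-word parallel. The only point that needs attention is the bookkeeping forced by the fact that Proposition~\ref{propComparison1} is \emph{not} symmetric under $t_1\leftrightarrow t_2$ --- the particle that is ahead at $t_1$ is the one whose backwards path must be rooted at $t_2$ --- so one has to match $\hat x_P$ (ahead at $t_1$ by \eqref{eqchoiceP}) with the role of $x_N$ there; getting this assignment right is exactly what makes the event ${\cal \hat E}_{t_1,N}^{t_2,P}$ correspond to ${\cal E}_{t_1,M}^{t_2,N}$, and everything else is a mechanical relabeling together with the two figures' worth of case analysis already carried out above.
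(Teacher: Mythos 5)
Your proposal is correct and is precisely the detailed expansion of the paper's one-line proof, which simply observes that Proposition~\ref{propComparison2} follows from Proposition~\ref{propComparison1} by interchanging the roles of $x$ and $\tilde x$ (so that $\hat x_P$, which is ahead at $t_1$, plays the role of $x_N$ there and has its backwards path rooted at $t_2$). You carried out that relabeling carefully, verifying all four applications of \eqref{eq1} and \eqref{eq2}, the label inequality $P_1\le N_1$ by contradiction against \eqref{eqchoiceP}, and the step-process monotonicity step — the mapping of roles is stated correctly and the argument is sound.
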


\subsection{End-point localization of backwards paths}
Here with prove two lemmas which imply that with high probability the events ${\cal E}$ and ${\cal \hat E}$ do occur from \eqref{eqEvent} and \eqref{eqEventB}.

\begin{lem}\label{lemLocalStep}
Let $x(t)$ be the TASEP with the step initial condition. Let $N=\alpha T$ with $\alpha\in(0,1)$ and $t=T-\varkappa T^{2/3}$ with $\varkappa$ in a bounded set. Define $\alpha_T=\alpha T/t$. Then there exist constants $C,c>0$ independent of $N$ such that for all $K_1,K_2>0$ we have
\begin{equation}\label{eq1.18}
\Pb(|x_{N(t\downarrow 0)}(0)|\geq K_1 T^{1/3})\leq C e^{-c K_1}
\end{equation}
and
\begin{equation}\label{eq1.16}
\Pb\left(|x_{N}(t)-(1-2\sqrt{\alpha_T})t|\geq K_2 T^{1/3}\right)\leq C e^{-c K_2},
\end{equation}
uniformly for all $N$ large enough.
\end{lem}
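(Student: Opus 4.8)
Both estimates are statements about the TASEP with step initial condition in the KPZ scaling window: \eqref{eq1.18} localizes the endpoint of the backwards path on the spatial $T^{1/3}$ scale, while \eqref{eq1.16} is the standard one-point concentration of a tagged particle around its macroscopic position. The plan is to deduce both from known one-point tail bounds for TASEP (equivalently, for the associated last passage percolation / the current $h(x,t)$), which are available in the literature with exponential (indeed stretched-exponential, but $e^{-cK}$ is all we need here) decay, uniformly in the scaling parameters in compact sets.

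First I would prove \eqref{eq1.16}. Recall that for the step initial condition the position $x_N(t)$ has an exact description in terms of last passage times, and the law of large numbers gives $x_N(t)/t \to 1-2\sqrt{\alpha_t}$ when $N/t = \alpha_t$ with $\alpha_t$ bounded away from $0$ and $1$; with $N=\alpha T$ and $t=T-\varkappa T^{2/3}$ we have $\alpha_T = \alpha T/t = \alpha(1+\varkappa T^{-1/3}+O(T^{-2/3}))$, so $\alpha_T \in (0,1)$ stays in a compact subinterval for $T$ large and $\varkappa$ bounded. The fluctuations of $x_N(t)$ around $(1-2\sqrt{\alpha_T})t$ are of order $T^{1/3}$ with Tracy–Widom-type upper and lower tails; the bound $\Pb(|x_N(t)-(1-2\sqrt{\alpha_T})t|\geq K_2 T^{1/3})\leq Ce^{-cK_2}$ is then immediate from those tail estimates, with constants uniform over the relevant range of parameters because the geometry (the density profile away from the edges) is uniformly nondegenerate. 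This step is routine given the cited concentration results.

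The real content is \eqref{eq1.18}. Here I would use the backwards-path inequalities \eqref{eq1}–\eqref{eq3}. The endpoint $x_{N(t\downarrow 0)}(0)$ is the starting position on the step configuration of the backwards path from $(N,t)$; since the step initial condition has particles exactly at $\Z_{\le 0}$, the label $N(t\downarrow 0)$ and the position $x_{N(t\downarrow 0)}(0) = -N(t\downarrow 0)+1$ are in bijection, so it suffices to control how far back the label has travelled, i.e. to show $N(t\downarrow 0) \le cK_1 T^{1/3}$ with the stated probability (the trivial bound $N(t\downarrow 0)\ge 1$ handles the other side). To get the upper bound on the endpoint I would compare, via \eqref{eq1} resp. \eqref{eq3}, $x_N(t)$ evaluated against a restarted step configuration placed at the endpoint: on the event $\{x_{N(t\downarrow 0)}(0) \le -K_1 T^{1/3}\}$, equation \eqref{eq1} writes $x_N(t)$ as the position, after time $t$, of a particle with label $N-N(t\downarrow 0)+1 \le N - (K_1 T^{1/3}-1)$ in a step-initial-condition TASEP whose rightmost particle sits at $x_{N(t\downarrow 0)}(0)\le -K_1 T^{1/3}$. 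But moving the rightmost particle of that restarted configuration back by $K_1 T^{1/3}$ and removing $\sim K_1 T^{1/3}$ particles shifts the macroscopic position of the tagged particle by a definite negative amount of order $K_1 T^{1/3}$ (this is where one uses that the velocity $1-2\sqrt{\alpha}$ has a strictly negative derivative in $\alpha$ on compacts, and that $x^{{\rm step},Z}=x^{{\rm step}}+Z$ shifts rigidly). So the event forces $x_N(t)$ to be smaller than $(1-2\sqrt{\alpha_T})t$ by at least $c K_1 T^{1/3}$, up to fluctuations; combining with the lower-tail estimate from \eqref{eq1.16} (or directly with the one-point lower tail) gives $\Pb(x_{N(t\downarrow 0)}(0)\le -K_1 T^{1/3}) \le C e^{-cK_1}$.

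The main obstacle will be making the comparison in the previous paragraph quantitative and uniform: one must choose the restarted step TASEPs so that the monotone coupling gives a genuinely linear-in-$K_1$ gain, control the label shift $N-N(t\downarrow 0)+1$ and the position shift $x_{N(t\downarrow 0)}(0)$ simultaneously (they are both random and correlated), and then absorb everything into a single exponential tail. Concretely I would dyadically decompose over $K_1 \in [2^j, 2^{j+1})$, on each block compare against a deterministic step configuration restarted at position $-2^j T^{1/3}$ with the appropriately reduced label, apply the uniform one-point tail bound of \eqref{eq1.16}-type to that auxiliary TASEP, and sum the geometric series; the uniformity over $\varkappa$ in a bounded set and over large $N$ comes for free once the auxiliary parameters are checked to stay in a fixed compact region of the $(\alpha,\text{ratio})$-plane. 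The symmetric statement \eqref{eq1.16} for the auxiliary process is what closes the loop, so logically one proves \eqref{eq1.16} first (for a range of parameters slightly larger than needed) and then feeds it into the proof of \eqref{eq1.18}.
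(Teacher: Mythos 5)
Your overall plan is the same as the paper's: represent $x_N(t)$ via \eqref{eq1} as a tagged particle in a restarted step TASEP whose origin is the endpoint $x_{N(t\downarrow 0)}(0)$ of the backwards path, observe that an endpoint far to the left implies a restarted process with a shifted origin and a reduced label, and then contradict one-point tail bounds of the kind in the appendix. Equation \eqref{eq1.16} is handled the same way in both.

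However, the key comparison in your proof of \eqref{eq1.18} has the sign reversed, and as written the argument does not close. On the event $\{x_{N(t\downarrow 0)}(0)\leq -K_1T^{1/3}\}$, $x_N(t)$ coincides with $x^{{\rm step},-m}_{N-m}(t)$ for some $m\geq K_1T^{1/3}$. Shifting the origin left by $m$ contributes $-m$ to the position, but tagging the particle with label $N-m$ instead of $N$ contributes about $+m/\sqrt{\alpha_T}$; since $\alpha_T\in(0,1)$, the net macroscopic shift is $m(\alpha_T^{-1/2}-1)>0$, i.e.\ to the \emph{right}. So the bad event forces $x_N(t)$ to be abnormally \emph{large}, not "smaller than $(1-2\sqrt{\alpha_T})t$ by $cK_1T^{1/3}$" as you write. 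Consequently you must use the bound on upper deviations of $x_N(t)$ (the paper's \eqref{eqOnePointLowerTail}, named that way because of the LPP correspondence), together with a bound on lower deviations of the auxiliary restarted process, rather than the lower-deviation estimate you invoke. The paper makes this quantitative cleanly by introducing a threshold $A=(1-2\sqrt{\alpha_T})t+\tfrac12 K_1(\alpha^{-1/2}-1)T^{1/3}$ lying between the two typical positions and bounding the two resulting tail events separately. A further simplification the paper uses which you could adopt: $n\mapsto x^{{\rm step},-n+1}_{N-n+1}(t)$ is weakly increasing, so on the bad event one directly has $x_N(t)\geq x^{{\rm step},-K_1T^{1/3}}_{N-K_1T^{1/3}}(t)$; this replaces your proposed dyadic decomposition over scales by a single comparison with one auxiliary process, and avoids the need to track the random label/position shift simultaneously.
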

\begin{proof}
For the step initial condition $x_{N(t\downarrow 0)}(0)=-N(t\downarrow 0)+1\leq 0$. Thus, by \eqref{eq1} at time $\tau=0$ we have
\begin{equation}\label{eq1.20}
x_N(t)=x^{{\rm step},-N(t\downarrow 0)+1}_{N-N(t\downarrow 0)+1}(t),
\end{equation}
that is, $x_N(t)$ can be obtained from the step initial condition where the particles on $[-N(t\downarrow 0)+2,0]$ are removed. Let us estimate the probability that $N(t\downarrow 0)\geq K_1 T^{1/3}+1$.

We have\footnote{In the r.h.s.\ of \eqref{eq1.21} we have used a strict inequality. The reason is that the event \mbox{$\{x_N(t)=\min_{n\geq K_1 T^{1/3}+1}x^{{\rm step},-n+1}_{N-n+1}(t))\}$} does not imply $N(t\downarrow 0)< K_1 T^{1/3}+1$, because several values of $n$ could minimize $x^{{\rm step},-n+1}_{N-n+1}(t))$.} (using \eqref{eq1.20})
\begin{equation}\label{eq1.21}
\begin{aligned}
\Pb(N(t\downarrow 0)< K_1 T^{1/3}+1)&\geq \Pb(x_N(t)<\min_{n\geq K_1 T^{1/3}+1}x^{{\rm step},-n+1}_{N-n+1}(t))\\
&= \Pb(x_N(t)<x^{{\rm step},-K_1 T^{1/3}}_{N-K_1 T^{1/3}}(t)),
\end{aligned}
\end{equation}
where we used that $x^{{\rm step},-n+1}_{N-n+1}(t)$ is weakly increasing in $n$. Next, for any constant $A$,
\begin{equation}
\begin{aligned}
\Pb(x_N(t)<x^{{\rm step},-K_1 T^{1/3}}_{N-K_1 T^{1/3}}(t))&\geq \Pb(x_N(t)\leq A <x^{{\rm step},-K_1 T^{1/3}}_{N-K_1 T^{1/3}}(t))\\
&\geq 1-\Pb(\{x_N(t)> A\}\cup\{A\geq x^{{\rm step},-K_1 T^{1/3}}_{N-K_1 T^{1/3}}(t)\})\\
&\geq 1-\Pb(x_N(t)> A)-\Pb(A\geq x^{{\rm step},-K_1 T^{1/3}}_{N-K_1 T^{1/3}}(t)).
\end{aligned}
\end{equation}
Thus
\begin{equation}\label{eq1.23}
\Pb(N(t\downarrow 0)\geq K_1 T^{1/3})\leq \Pb(x_N(t)> A)+\Pb(A\geq x^{{\rm step},-K_1 T^{1/3}}_{N-K_1 T^{1/3}}(t)).
\end{equation}
We choose
\begin{equation}
A=(1-2\sqrt{\alpha_T})t+\tfrac12 K_1 (\alpha^{-1/2}-1)T^{1/3}.
\end{equation}

To bound the first term, with $\sigma=\frac{(1-\sqrt{\alpha})^{2/3}}{\alpha^{1/6}}$ we have (see Lemma~\ref{LemOnePointBounds})
\begin{equation}
\lim_{T\to\infty}\Pb\left(x_{\alpha T}(t)\geq A\right)=\lim_{T\to\infty}\Pb\left(x_{\alpha T}(t)\geq (1-2\sqrt{\alpha_T})t-s \sigma T^{1/3}\right) = F_{\rm GUE}(s)
\end{equation}
with $s=-\tfrac12 K_1 (\alpha^{-1/2}-1)/\sigma$. The tails for large but finite time $T$ are also known to be (super)-exponential. In particular, for all $T$ large enough and $s_1<0$, see \eqref{eqOnePointLowerTail},
\begin{equation}\label{eq1.26}
\Pb\left(x_{\alpha T}(t)\geq (1-2\sqrt{\alpha_T})t-s_1 \sigma T^{1/3}\right) \leq C e^{-c |s_1|^{3/2}}.
\end{equation}
\eqref{eq1.16} follows from this bound, together with the bound of the upper tail \eqref{eqOnePointUpperTail}.
By our choice of $A$, $\Pb\left(x_{\alpha T}(t)\geq A\right)\leq C e^{-c K_1^{3/2}}$ for other constants $C,c>0$.

To bound the second term in \eqref{eq1.23}, let $\hat \alpha_T=\frac{\alpha T-K_1 T^{1/3}}{T-\tau T^{2/3}}$. Then
\begin{equation}
\begin{aligned}
&\Pb\left(x^{{\rm step},-K_1 T^{1/3}}_{\alpha T-K_1 T^{1/3}}(t)> (1-2\sqrt{\hat\alpha_T})t-K_1 T^{1/3}-s_2 \sigma T^{1/3}\right) \\
&= \Pb\left(x^{{\rm step}}_{\alpha T-K_1 T^{1/3}}(t)> (1-2\sqrt{\hat\alpha_T})t-s_2 \sigma T^{1/3}\right)\stackrel{T\to\infty}{\longrightarrow} F_{\rm GUE}(s_2),
\end{aligned}
\end{equation}
and, for all $T$ large enough, see \eqref{eqOnePointUpperTail},
\begin{equation}
\Pb\left(x^{{\rm step},-K_1 T^{1/3}}_{\alpha T-K_1 T^{1/3}}(t)\leq (1-2\sqrt{\hat\alpha_T})t-K_1 T^{1/3}-s_2 \sigma T^{1/3}\right) \leq C e^{-c s_2}.
\end{equation}
A computation gives, as $T\to\infty$,
\begin{equation}
(1-2\sqrt{\hat\alpha_T})t-K_1 T^{1/3} = (1-2\sqrt{\alpha_T})t +K_1(\alpha^{-1/2}-1)T^{1/3}+O(1).
\end{equation}
This corresponds to setting $s_2=\tfrac12K_1(\alpha^{-1/2}-1)/\sigma+o(1)$, which implies
\begin{equation}
\Pb\left(x^{{\rm step},-K_1 T^{1/3}}_{\alpha T-K_1 T^{1/3}}(t)\leq A\right)\leq C e^{-c K_1}
\end{equation}
for some other constants $C,c>0$.

Since $N(t\downarrow 0)\geq 1$, \eqref{eq1.18} follows from the bounds on the two terms of \eqref{eq1.23}.
\end{proof}

\begin{lem}\label{lemLocalStat}
Consider the TASEP with the stationary initial condition with density $\rho$. Then there exist constants $C,c>0$ independent of $t$ such that for all $K>0$ and any $N\in\Z$
\begin{equation}
\Pb(|x^\rho_N(t)-x^\rho_{N(t\downarrow 0)}(0)-(1-2\rho) t|\geq K t^{2/3})\leq C e^{-c K},
\end{equation}
uniformly for all $t$ large enough.
\end{lem}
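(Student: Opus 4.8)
The plan is to split the displacement along the backwards path into a single-particle part and a relabelling part, and to reduce everything to a localization of the backwards-path endpoint $\ell:=N(t\downarrow 0)$. By translation invariance of the stationary measure and $\Z$-equivariance of the labelled dynamics, the law of $\big(x^\rho_{N+j}(\cdot)-x^\rho_N(0)\big)_{j\in\Z}$ does not depend on $N$, so we fix $N$. Then
\begin{equation*}
x^\rho_N(t)-x^\rho_{N(t\downarrow 0)}(0)=\big(x^\rho_N(t)-x^\rho_N(0)\big)+\big(x^\rho_N(0)-x^\rho_{\ell}(0)\big).
\end{equation*}
By Burke's theorem the first term is a $\mathrm{Poisson}((1-\rho)t)$ random variable, hence within $\tfrac12 Kt^{2/3}$ of $(1-\rho)t$ except on an event of probability $\le Ce^{-cK^2t^{1/3}}\le Ce^{-cK}$ for $t$ large. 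The second term equals $-\sum_{k=\ell}^{N-1}g_k$ with $g_k:=x^\rho_k(0)-x^\rho_{k+1}(0)$, and under the stationary (product Bernoulli$(\rho)$) measure the $g_k-1$ are i.i.d.\ $\mathrm{Geometric}(\rho)$, so the $g_k$ are i.i.d.\ of mean $1/\rho$ with exponential tails.

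It therefore suffices to prove $\Pb\big(|\ell-(N-\rho^2 t)|\ge Kt^{2/3}\big)\le Ce^{-cK}$. Granting this, a union bound over the $\Or(Kt^{2/3})$ admissible values of $\ell$ together with the concentration of a sum of $\rho^2 t+\Or(Kt^{2/3})$ i.i.d.\ exponential-tailed terms of mean $1/\rho$ gives $\sum_{k=\ell}^{N-1}g_k=\rho t+\Or(Kt^{2/3})$ with exponential-in-$K$ tails, so that $x^\rho_N(t)-x^\rho_{N(t\downarrow 0)}(0)=(1-\rho)t-\rho t+\Or(Kt^{2/3})=(1-2\rho)t+\Or(Kt^{2/3})$, as claimed. (The value $N-\rho^2 t$ is the natural one because the backwards path shadows the characteristic of speed $1-2\rho$ and hence crosses about $\rho^2 t$ particle world-lines between times $t$ and $0$.)

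The localization of $\ell$ is the step I expect to be the main obstacle; it runs closely parallel to the proof of Lemma~\ref{lemLocalStep}. I would use the minimization identity $x^\rho_N(t)=\min_{n\le N}\phi(n)$ with $\phi(n):=x^\rho_n(0)+y^{x^\rho_n(0)}_{N-n+1}(0,t)=x^{{\rm step},x^\rho_n(0)}_{N-n+1}(0,t)$ (the infimum form of \eqref{eq2} quoted in the footnote), of which $\ell$ is a minimizer, so that, as in Lemma~\ref{lemLocalStep}, $\{\ell\ge n_1\}\subseteq\{x^\rho_N(t)\ge\min_{n_1\le n\le N}\phi(n)\}$ and symmetrically for $\{\ell\le n_1\}$. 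The deterministic profile $\bar\phi(n):=-n/\rho+\psi(N-n+1,t)$, where $\psi(m,t)=(1-2\sqrt{m/t})\,t$ is the law-of-large-numbers value of $y^z_m(0,t)=x^{{\rm step},z}_m(0,t)-z$ (which by translation invariance does not depend on $z$; Rost~\cite{R81}), is strictly convex near its minimizer $n^\ast=N+1-\rho^2 t$ with $\bar\phi''\asymp t^{-1}$, so $\bar\phi(n)-\bar\phi(n^\ast)\gtrsim (n-n^\ast)^2/t$. The fluctuation $\phi(n)-\bar\phi(n)=\big(x^\rho_n(0)+n/\rho\big)+\big(y^{x^\rho_n(0)}_{N-n+1}(0,t)-\psi(N-n+1,t)\big)$ has a first summand that is a mean-zero random walk in $n$ (oscillation $\Or(\sqrt W)$ over a width-$W$ window, with exponential tails for the normalized oscillation) and a second summand of order $t^{1/3}$ with tails $e^{-cs^{3/2}}$ by the step one-point estimates already used for Lemma~\ref{lemLocalStep}, since $y^z_m(0,t)\stackrel{d}{=}x^{\rm step}_m(0,t)$ (see \eqref{eqOnePointUpperTail}, \eqref{eqOnePointLowerTail}). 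Comparing the parabolic gain $\gtrsim 4^jK^2t^{1/3}$ on the dyadic shell $2^jKt^{2/3}\le|n-n^\ast|<2^{j+1}Kt^{2/3}$ against these fluctuations shows that $\ell$ can lie at distance $\ge Kt^{2/3}$ from $n^\ast$ only if some fluctuation exceeds its typical scale by a factor $\gtrsim 4^jK^2$; summing the resulting probabilities over $j\ge 0$ (the parabolic gain absorbs the polynomially many candidate $n$ per shell and makes the sum over shells geometric), adding the negligible atypical-initial-configuration event (controlled by Bernoulli concentration), and handling ties in the minimizer by a strict inequality as in Lemma~\ref{lemLocalStep}, yields the bound $Ce^{-cK}$.
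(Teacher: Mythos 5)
Your decomposition into a Burke–Poisson increment plus a sum of initial gaps, and the reduction to localizing $\ell:=N(t\downarrow 0)$ near $N-\rho^2 t$, is a genuinely different route from the paper's. The paper instead sets a perturbed density $\rho_+=\rho_0+\tilde\kappa t^{-1/3}$, recasts \enquote{the backwards path ends left of the origin} via the left/right decomposition \eqref{eq1.31} as a comparison $x^{\rho_+,\mathrm{right}}_M(t)>x^{\rho_+,\mathrm{left}}_M(t)$, translates to LPP, and invokes the exit-point estimates of Lemma~2.5/3.3 of \cite{FO17} together with the known Baik–Rains one-point bound. That shortcut is precisely what makes the lemma a one-page computation; the content you still need to supply is exactly what those FO17 bounds encapsulate.

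The gap in your sketch is in the very step you flag as \enquote{closely parallel to the proof of Lemma~\ref{lemLocalStep}}: it is not parallel, because the collapse in \eqref{eq1.21} relies on $n\mapsto x^{{\rm step},-n+1}_{N-n+1}(t)$ being deterministic-in-initial-data and weakly monotone in $n$, so that the minimum over an entire half-line of labels reduces to a single term with no union bound. In your setting $\phi(n)=x^\rho_n(0)+y^{x^\rho_n(0)}_{N-n+1}(0,t)$ has a random, non-monotone boundary contribution, and that collapse is unavailable. Your dyadic shell $2^jKt^{2/3}\le|n-n^\ast|<2^{j+1}Kt^{2/3}$ contains $\asymp 2^jKt^{2/3}$ labels, while the per-label tail bound you propose (stretched-exponential in $4^jK^2$, coming from \eqref{eqOnePointLowerTail} at scale $t^{1/3}$, plus sub-Gaussian for the initial walk) is \emph{independent of $t$}. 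So the per-shell union bound is of order $2^jK\,t^{2/3}\,e^{-c\,8^jK^3}$, which for bounded $K$ and the innermost shells is \emph{not} $o(1)$ as $t\to\infty$; the sentence \enquote{the parabolic gain absorbs the polynomially many candidate $n$ per shell} is not correct as stated. To close the argument one must exploit correlations across $n$ rather than union-bound over them: for the random-walk part this is a Doob-type maximal inequality, but for the $y^{\cdot}_{N-n+1}$ part you need a genuine transversal/local-fluctuation estimate for the KPZ increment over a window of width $O(t^{2/3})$ in the label, which is nontrivial (and essentially the content of the FO17 exit-point lemma the paper imports). Without that, the localization of $\ell$ — and hence the lemma — is not proved.

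A minor additional caveat: the Burke increment $x^\rho_N(t)-x^\rho_N(0)$ and the gap sum $\sum_{k=\ell}^{N-1}g_k$ are not independent of $\ell$, so even after localizing $\ell$ you need a simultaneous (e.g.\ maximal-over-$m$) concentration for the partial sums $\sum_{k=N-m}^{N-1}g_k$ over the admissible window; you gesture at this, and it is routine, but it should be stated as a maximal inequality rather than a union over $\Or(Kt^{2/3})$ values of $\ell$ (which would again cost a factor $t^{2/3}$).
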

\begin{proof}
Consider a density $\rho_+=\rho_0+\tilde \kappa t^{-1/3}$ with $\tilde\kappa>0$ and set $\chi_+=\rho_+(1-\rho_+)$, $\chi_0=\rho_0(1-\rho_0)$. In the stationary TASEP with density $\rho_+$, denoted as $x^{\rho_+}$, choose the particle number $M=\rho_+^2 t-2\tilde \kappa \rho_+ t^{2/3}$ (where particle with label $0$ is the first one strictly to the right of the origin). Then, see Theorem~1.6 of~\cite{BFP09},
\begin{equation}
\begin{aligned}
&\lim_{t\to\infty} \Pb\left(x^{\rho_+}_M(t)\geq (1-2\rho_+)t+2\tilde \kappa t^{2/3}-(1-\rho_+)\chi_+^{-1/3} s t^{1/3}\right)\\
&=\lim_{t\to\infty} \Pb\left(x^{\rho_+}_M(t)\geq (1-2\rho_0)t-(1-\rho_0)\chi_0^{-1/3} s t^{1/3}\right) = F_{{\rm BR},\tilde \kappa/\chi_0^{1/3}}(s),
\end{aligned}
\end{equation}
where $F_{{\rm BR},w}$ is the Baik-Rains distribution function with parameter $w$. In addition, for all large $t$ (see Lemma~\ref{LemOnePointStat})
\begin{equation}\label{eq1.29}
\Pb\left(|x^{\rho_+}_M(t) -(1-2\rho_0)t|\geq K t^{1/3}\right)\leq C e^{-c K}
\end{equation}
for some constants $C,c>0$.

Now consider TASEPs with two initial conditions modified as follows: (a) $x^{\rho_+,{\rm left}}$ consists of $x^{\rho_+}$ in which all particles weakly to the left of $0$ are pushed to the right until $0$ to create the step initial condition on $\{\ldots,-2,-1,0\}$ and particles strictly to the right of $0$ are unchanged, (b) $x^{\rho_+,{\rm right}}$ consists of removing all particles starting strictly to the right of $0$ of $x^{\rho_+}$, keeping the particles to the left of $0$ unchanged (the numbering of the remaining particles is unchanged).

By \eqref{eqLeftRightDecomposition} and the discussion above it, we have
\begin{equation}\label{eq1.31}
\{x^{\rho_+}_{M(t\downarrow 0)}(0)>0\}=\{M(t\downarrow 0)\leq 0\} \supseteq \{x^{\rho_+,{\rm right}}_M(t) > x^{\rho_+,{\rm left}}_M(t)\}.
\end{equation}
The inclusion is not necessarily an equality since it could also happen that the two random variables in the last expression are equal. Thus we have, for any choice of $A$,
\begin{equation}
\begin{aligned}
\Pb(x^{\rho_+}_{M(t\downarrow 0)}(0)>0)&\geq \Pb(x^{\rho_+,{\rm right}}_M(t) > A \geq x^{\rho_+,{\rm left}}_M(t))\\
&\geq 1-\Pb(x^{\rho_+,{\rm right}}_M(t) \leq A) - \Pb(x^{\rho_+,{\rm left}}_M(t)> A),
\end{aligned}
\end{equation}
which gives
\begin{equation}\label{eq1.32}
\Pb(x^{\rho_+}_{M(t\downarrow 0)}(0)\leq 0) \leq \Pb(x^{\rho_+,{\rm right}}_M(t) \leq  A) +\Pb(x^{\rho_+,{\rm left}}_M(t)> A).
\end{equation}
We choose $A=(1-2\rho_0)t+\tilde \kappa^2 t^{1/3}$, since typically $x^{\rho_+,{\rm left}}_M(t)$ is to the left of $A$ and $x^{\rho_+,{\rm right}}_M(t)$ is to the right of $A$.

The two initial condition used here can be analyzed directly using determinantal formulas, but the bounds for precisely this case have not been written down before. To avoid redoing standard asymptotic analysis, we opt to use the connection to the last passage percolation (LPP) where such computations have been made. We want to emphasize that it is not necessary.

We have
\begin{equation}
\begin{aligned}
&\Pb(x^{\rho_+,{\rm left}}_M(t) >  A) = \Pb(L^{\rho_+}_{-} (M+A,M)\leq t),\\
&\Pb(x^{\rho_+,{\rm right}}_M(t) \leq A) = \Pb(L^{\rho_+}_\vert (M+A,M) > t).
\end{aligned}
\end{equation}
Here $L^{\rho_+}_\vert$ is the LPP with ${\rm exp}(1)$ random variables on $\{(i,j)| i\geq 1,j\geq 1\}$ and ${\rm exp}(\rho_+)$ on $\{(0,j)| j\geq 1\}$, while $L^{\rho_+}_{-}$ is the LPP with the same bulk randomness and with ${\rm exp}(1-\rho_+)$ on $\{(i,0)|i\geq 1\}$.

In Lemma~2.5 of~\cite{FO17}, there are bounds on exit point probabilities bounding precisely $\Pb(L^{\rho_+}_\vert (M+A,M)> t)$ and $\Pb(L^{\rho_+}_{-} (M+A,M) \leq t)$; see (3.7) of~\cite{FO17}. The variables in~\cite{FO17} should be matched as follows:
\begin{equation}
x \to t,\quad n\to M,\quad \gamma^2 n \to M+A,\quad \kappa \to \tilde\kappa (n/t)^{1/3}=\tilde\kappa\rho_0^{2/3}+O(t^{-2/3}).
\end{equation}
Then the bounds used for the proof of Lemma~2.5, see Lemma~3.3 of~\cite{FO17}, lead to
\begin{equation}\label{eq1.35}
\Pb(L^{\rho_+}_\vert (M+A,M)> t)\leq C e^{-c \tilde\kappa^2},\quad \Pb(L^{\rho_+}_{-} (M+A,M) \leq t)\leq C e^{-c\tilde \kappa^3}
\end{equation}
for some constants $C,c>0$ (those can be taken uniformly for $\rho_+$ in a compact subset of $(0,1)$).

Now, we set $\rho_0=\rho_+-\kappa t^{-1/3}$, so that $\rho_+=\rho$. By \eqref{eq1.32}-\eqref{eq1.35} we have, with probability at least $1-2C e^{-c\tilde\kappa^2}$, that $x^{\rho}_M(t)-x^{\rho}_{M(t\downarrow 0)}(t)\leq x^\rho_M(t)$. By \eqref{eq1.29} we further have that with probability at least $1-C e^{-c K}$, $x^\rho_M(t)\leq (1-2\rho_0)t-K t^{2/3}=(1-2\rho)t+(2\tilde\kappa-K) t^{2/3}$. Thus, choosing $\tilde\kappa=K$ we get that
\begin{equation}\label{eq1.36}
\Pb(x^{\rho}_M(t)-x^{\rho}_{M(t\downarrow 0)}(0)\geq (1-2\rho)t+K t^{2/3})\leq C e^{-c K}
\end{equation}
for some new constants $C,c>0$. With similar arguments one shows that
\begin{equation}\label{eq1.37}
\Pb(x^{\rho}_M(t)-x^{\rho}_{M(t\downarrow 0)}(0)\leq (1-2\rho)t-K t^{2/3})\leq C e^{-c K}.
\end{equation}
By translation invariance, the statements \eqref{eq1.36} and \eqref{eq1.37} hold for any $M$, which is our result.
\end{proof}

\subsection{Comparison inequality and tightness}
First let us see that it is enough to verify the events ${\cal E}$ and $\cal \hat E$ at the largest and smallest times of a given interval.
\begin{lem}\label{lemGlobalEvents}
Consider two times $t_1,t_2$ such that $t\leq t_1<t_2\leq T$. Then, with notations \eqref{eqEvent}, \eqref{eqEventB}, we have
\begin{equation}\label{eqEinclusions}
{\cal E}_{t,M}^{T,N} \subseteq {\cal E}_{t_1,M}^{t_2,N},\quad \textrm{and}\quad {\cal \hat E}_{t,M}^{T,N} \subseteq  {\cal \hat E}_{t_1,M}^{t_2,N}.
\end{equation}
\end{lem}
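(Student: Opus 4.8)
The plan is to deduce both inclusions from a \emph{sandwiching} of backwards paths between the ones appearing in the larger interval, followed by a discrete intermediate‑value argument. I will carry out the first inclusion, ${\cal E}_{t,M}^{T,N}\subseteq{\cal E}_{t_1,M}^{t_2,N}$; the inclusion for ${\cal \hat E}$ follows by the identical reasoning with the two processes interchanged, exactly as Proposition~\ref{propComparison2} follows from Proposition~\ref{propComparison1}, using \eqref{eqchoiceP} in place of \eqref{eqchoiceM}.

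First I would record two properties of the backwards label process of a fixed TASEP, both immediate from its construction. \textbf{(i) Semigroup.} For $u\le s\le t$, the backwards label process started at time $t$ from a label $k$ and evaluated at time $u$ equals the one started at time $s$ from the label $k(t\downarrow s)$ and evaluated at $u$; in particular $k(t\downarrow s)\le k(t\downarrow t)=k$, since the label can only decrease as time runs backwards. \textbf{(ii) No crossing.} For fixed $s\ge u$ the map $k\mapsto k(s\downarrow u)$ is non‑decreasing — two backwards paths of the \emph{same} TASEP may coalesce but never cross. Combining (i) and (ii): if $s\le t$ then $k(t\downarrow u)\le k(s\downarrow u)$ for every $u\le s$. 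All of this applies verbatim to the second process and its label process $M(\cdot\downarrow\cdot)$, driven by its own suppressed trials.

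Now assume ${\cal E}_{t,M}^{T,N}$ and fix $\tau\le t$ with $x_{N(T\downarrow\tau)}(\tau)=\tilde x_{M(t\downarrow\tau)}(\tau)$. Write $\gamma(u)=x_{N(t_2\downarrow u)}(u)$ for $u\in[0,t_2]$ and $\delta(u)=\tilde x_{M(t_1\downarrow u)}(u)$ for $u\in[0,t_1]$; these are precisely the two backwards paths whose meeting defines ${\cal E}_{t_1,M}^{t_2,N}$, and each of them moves by $\pm1$ in space. Since $\tau\le t\le t_1\le t_2$, properties (i)–(ii) give $N(T\downarrow\tau)\le N(t_2\downarrow\tau)$ and $M(t_1\downarrow\tau)\le M(t\downarrow\tau)$, hence, using $x_k>x_{k+1}$ (and likewise for $\tilde x$),
\begin{equation*}
\gamma(\tau)=x_{N(t_2\downarrow\tau)}(\tau)\ \le\ x_{N(T\downarrow\tau)}(\tau)\ =\ \tilde x_{M(t\downarrow\tau)}(\tau)\ \le\ \tilde x_{M(t_1\downarrow\tau)}(\tau)=\delta(\tau).
\end{equation*}
At the later time $t_1$ we instead use $N(t_2\downarrow t_1)\le N$ together with \eqref{eqchoiceM}:
\begin{equation*}
\gamma(t_1)=x_{N(t_2\downarrow t_1)}(t_1)\ \ge\ x_N(t_1)\ \ge\ \tilde x_M(t_1)=\delta(t_1).
\end{equation*}
Thus $\gamma-\delta$ is $\le 0$ at $\tau$ and $\ge 0$ at $t_1$ on the interval $[\tau,t_1]\subseteq[0,t_1]$, and since $\gamma$ and $\delta$ each change by $\pm1$ at a time, $\gamma-\delta$ must vanish at some $\tau'\in[\tau,t_1]$; that $\tau'\le t_1$ witnesses ${\cal E}_{t_1,M}^{t_2,N}$.

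The conceptual core — and the step I expect to demand the most care — is the backwards‑path monotonicity (i)–(ii), which is what confines $\gamma$ and $\delta$ between the paths of the larger interval; once that is in place, the rest is the two displayed inequalities and the intermediate‑value observation. A minor technical point in the last step is that $\gamma$ and $\delta$, being backwards paths of \emph{different} coupled processes, may jump at a common clock ring; one checks that such a simultaneous jump can only interchange their order along a common unit space‑segment, so $\gamma-\delta$ still cannot switch sign without producing a meeting. The inclusion ${\cal \hat E}_{t,M}^{T,N}\subseteq{\cal \hat E}_{t_1,M}^{t_2,N}$ is then obtained by repeating the argument for the event of Proposition~\ref{propComparison2}.
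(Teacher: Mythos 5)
Your sandwiching at the time $\tau$ provided by ${\cal E}_{t,M}^{T,N}$ is correct, and the backwards-path monotonicity properties (i)--(ii) you record are exactly what one needs there. The gap is the inequality at the \emph{other} endpoint: to get $\gamma(t_1)\ge\delta(t_1)$ you write ``$x_N(t_1)\ge\tilde x_M(t_1)$'' and cite \eqref{eqchoiceM}. But the hypothesis of the form \eqref{eqchoiceM} that is available in this lemma is $\tilde x_M(t)\le x_N(t)$ at the \emph{outer} lower endpoint $t$; it is not an assumption that such an inequality holds at the intermediate time $t_1$. Since the two processes being compared (for instance step versus stationary, as in Proposition~\ref{propComparison}) are not configuration-wise ordered, the basic coupling does \emph{not} automatically propagate this tagged-particle ordering forward from $t$ to $t_1$, so you cannot simply re-instantiate \eqref{eqchoiceM} at the time $t_1$.

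That ordering at $t_1$ is precisely what the paper's proof is organized around. It first establishes ${\cal E}_{t,M}^{T,N}\subseteq{\cal E}_{t,M}^{u,N}$ for every $u\in[t,T]$ by running your comparison at the times $\tau$ and $t$, where only $\tilde x_M(t)\le x_N(t)$ is used; it then feeds the event ${\cal E}_{t,M}^{u,N}$ into Proposition~\ref{propComparison1} (whose hypothesis holds at time $t$) to bootstrap $\tilde x_M(u)\le x_N(u)$ for all $u\in[t,T]$. Only after that bootstrap is the bound $\tilde x_M(t_1)\le x_N(t_1)$ available, and only then can one compare $\gamma$ and $\delta$ at the endpoint $t_1$ and invoke the intermediate-value observation on $[\tau,t_1]$. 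Your side remarks --- properties (i)--(ii) and the observation that a simultaneous clock ring can move $\gamma$ and $\delta$ together only once they already coincide --- are correct, but they do not supply the missing one-sided bound at $t_1$, which must be derived rather than cited.
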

\begin{proof}
Assume that ${\cal E}_{t,M}^{T,N}$ occurs. Then we have $\tilde x_M(t)\leq x_N(t)$ (by assumption), and hence (via basic coupling) $\tilde x_M(T)\leq x_N(T)$. For any $u \in [t,T]$, we have that the backwards path from $x_N(u)$ is to the left of the backwards path from $x_N(T)$. This implies that the backwards path from $\tilde x_M(t)$ will intersect the one from $x_N(u)$. Thus ${\cal E}_{t,M}^{T,N}$ implies ${\cal E}_{t,M}^{u,N}$ for all $u\in[t,T]$. In turn, since $\tilde x_M(t)\leq x_N(t)$ and the event ${\cal E}_{t,M}^{u,N}$ implies, by Proposition~\ref{propComparison1}, that the increment $x_N(u)-x_N(t)$ is larger than $\tilde x_M(u)-\tilde x_M(t)$, we also have $\tilde x_M(u)\leq x_N(u)$ for all $u\in [t,T]$. But this then implies that the backwards path from $\tilde x_M(t_1)$ intersects the backwards path from $x_N(t_2)$ for all $t_1<t_2$ in $[t,T]$.

For $\cal \hat E$ it is similar. See also Figure~\ref{FigSandwitching} for an illustration.
\end{proof}
\begin{figure}[t!]
\begin{center}
\psfrag{time}[lc]{time}
\psfrag{space}[lc]{space}
\psfrag{xn}[rc]{$x_N$}
\psfrag{xm}[rc]{$\tilde x_M$}
\psfrag{t}[lc]{$t$}
\psfrag{t1}[lc]{$t_1$}
\psfrag{t2}[lc]{$t_2$}
\psfrag{T}[l<c]{$T$}
\psfrag{EtT}[lc]{${\cal E}_{t,M}^{T,N}$}
\psfrag{Et1t2}[lc]{${\cal E}_{t_1,M}^{t_2,N}$}
\includegraphics[height=5cm]{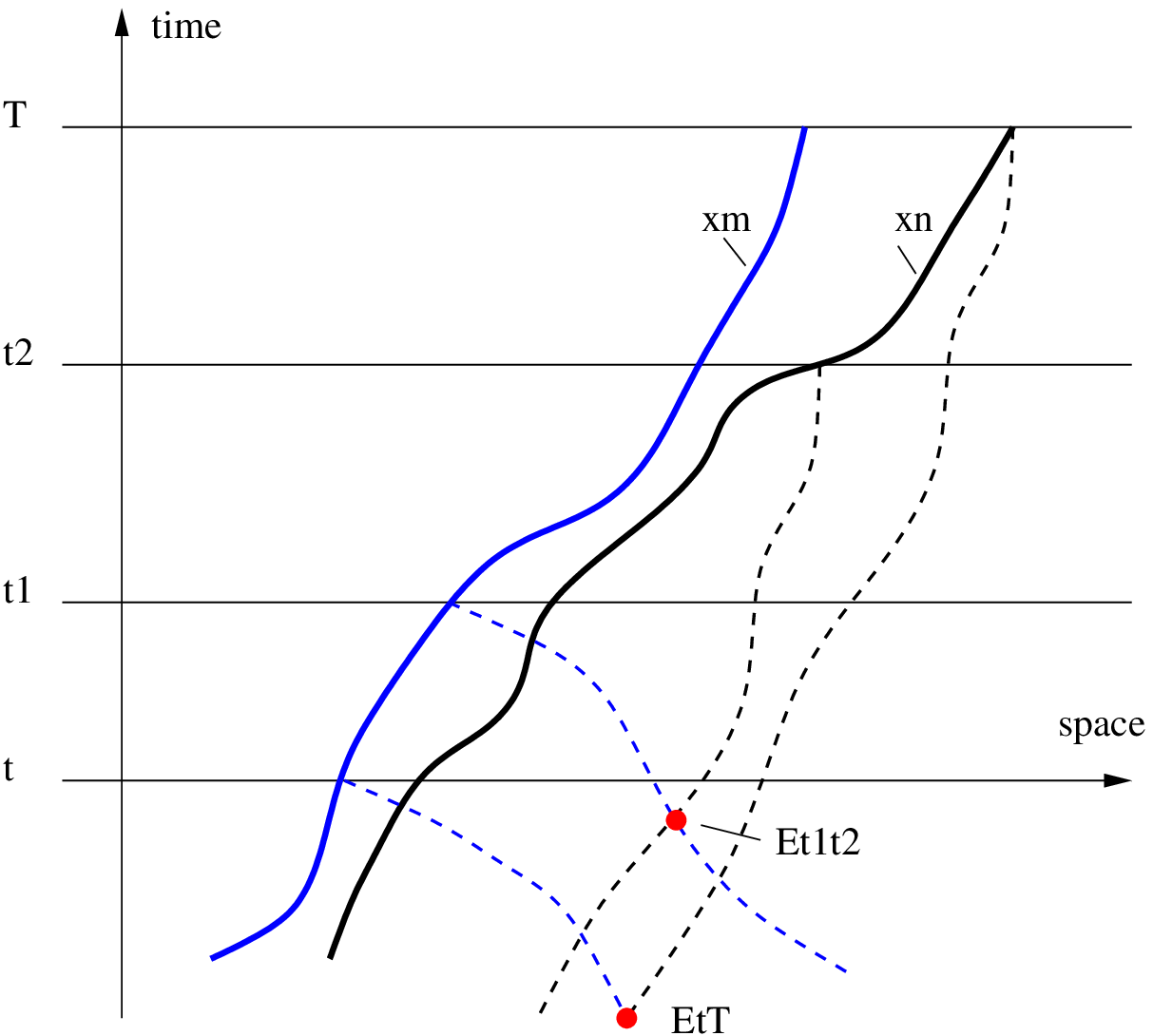}
\caption{The solid lines are the evolution of particle $x_N$ and $\tilde x_M$, while the dashed paths are backwards path starting at different times. The red dots represents the occurrence of the events ${\cal E}_{t,M}^{T,N}$ and ${\cal E}_{t_1,M}^{t_2,N}$. One sees that ${\cal E}_{t,M}^{T,N}$ implies ${\cal E}_{t_1,M}^{t_2,N}$.}
\label{FigSandwitching}
\end{center}
\end{figure}

With the above results, we immediately have an estimate on the probabilities of ${\cal E}_{t_1,M}^{t_2,N}$ and ${\cal \hat E}_{t_1,N}^{t_2,P}$.
\begin{prop}\label{propComparison}
Let $x(t)$ be the TASEP with the step initial condition. Set \mbox{$N=\alpha T$}, $\alpha\in (-1,1)$, and $t=T-\varkappa T^{2/3}$ with $\varkappa>0$ fixed. Let $\rho_0=\sqrt{\alpha_T}=\sqrt{\alpha T/t}$ be the average density of particles around $x_N(t)$. Define
\begin{equation}
\rho_\pm = \rho_0\pm \kappa t^{-1/3}=\sqrt{\alpha}+\left(\tfrac12\varkappa \sqrt{\alpha}\pm \kappa \right)t^{-1/3}+\Or(t^{-2/3})
\end{equation}
with $\kappa>0$. Consider two stationary processes: $\tilde x(t)=x^{\rho_+}(t)$ having density $\rho_+$, and $\hat x(t)=x^{\rho_-}(t)$  having density $\rho_-$. Define the index $M$ to be the smallest one such that $x^{\rho_+}_M(t)\leq  x_N(t)$ and the index $P$ to be the largest one such that $x_N(t) \leq  x^{\rho_-}_P(t)$, and recall the events ${\cal E}_{t,M}^{T,N}$ and ${\cal\hat E}_{t,N}^{T,P}$ of Propositions~\ref{propComparison1} and~\ref{propComparison2}. Then
\begin{equation}
\Pb({\cal E}_{t_1,M}^{t_2,N}\cap {\cal\hat E}_{t_1,N}^{t_2,P}\textrm{ for all }t\leq t_1<t_2\leq T)\geq 1-C e^{-c\kappa},
\end{equation}
for some constants $C,c>0$, where the constants are uniform for all $T$ large enough.
\end{prop}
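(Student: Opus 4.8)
The plan is to combine the two comparison inequalities (Propositions~\ref{propComparison1} and~\ref{propComparison2}) with the endpoint-localization lemmas (Lemmas~\ref{lemLocalStep} and~\ref{lemLocalStat}), using Lemma~\ref{lemGlobalEvents} to reduce the ``for all $t_1<t_2$'' statement to the single pair $(t,T)$. Concretely, by Lemma~\ref{lemGlobalEvents} it suffices to bound from below the probability of ${\cal E}_{t,M}^{T,N}\cap{\cal\hat E}_{t,N}^{T,P}$; the intersection over all sub-intervals then comes for free. So the whole proof reduces to showing that, with probability at least $1-Ce^{-c\kappa}$, the backwards path $\pi_{N,T}$ (started from $x_N$ at time $T$) meets the backwards path $\tilde\pi_{M,t}$ of the stationary process $x^{\rho_+}$ somewhere at a time $\tau\le t$, and likewise $\pi_{N,t}$ meets $\hat\pi_{P,T}$ of $x^{\rho_-}$.

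First I would pin down where the three backwards paths end at time $0$. For the step process, Lemma~\ref{lemLocalStep}, eq.~\eqref{eq1.18}, gives $|x_{N(t\downarrow0)}(0)|\le K_1T^{1/3}$ with probability $\ge1-Ce^{-cK_1}$; I will take $K_1$ a small multiple of $\kappa$. For the stationary processes, I need the analogue of eq.~\eqref{eq1.31}–\eqref{eq1.35}: choosing $M$ as the smallest index with $x^{\rho_+}_M(t)\le x_N(t)$ and using the one-point localization \eqref{eq1.16} places $x_N(t)$ within $O(\kappa t^{1/3})$ of $(1-2\rho_0)t$, hence $M=\rho_+^2t-2\kappa\rho_+t^{2/3}+O(\kappa t^{1/3})$, matching (up to lower order) the index chosen in Lemma~\ref{lemLocalStat}; that lemma's proof then shows $x^{\rho_+}_{M(t\downarrow0)}(0)>0$ with probability $\ge1-Ce^{-c\kappa^2}$. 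Symmetrically, $P$ is chosen so that $x^{\rho_-}_{P(t\downarrow0)}(0)<0$ with the same kind of bound. Note the density splitting $\rho_\pm=\rho_0\pm\kappa t^{-1/3}$ is exactly what makes the two stationary backwards paths go to opposite sides of the origin while staying $O(\kappa t^{1/3})$-close in space to the step path's endpoint.

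The geometric heart of the argument is then an \emph{ordering/crossing} statement: under the basic coupling, if $x^{\rho_+}_M(t)\le x_N(t)$ and the backwards path of the faster (denser) process ends weakly to the right of $0$ while the step path ends weakly to the left of $0$, the two backwards paths — both having only $\pm1$ steps, hence continuous in the lattice sense — must cross, which forces a common point $x_{N(T\downarrow\tau)}(\tau)=\tilde x_{M(t\downarrow\tau)}(\tau)$ for some $\tau\le t$, i.e.\ ${\cal E}_{t,M}^{T,N}$ occurs. This is the same topological mechanism already used implicitly around \eqref{eqLeftRightDecomposition} and in Lemma~\ref{lemLocalStep}; the point is that a path of $\pm1$ steps from a point $\ge0$ at time $0$ and a path of $\pm1$ steps from a point $\le0$ at time $0$, both ending at the coupled ordered particles at the later time, cannot avoid one another. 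The analogous statement with $\rho_-$, $P$, $x_N(t)\le x^{\rho_-}_P(t)$ gives ${\cal\hat E}_{t,N}^{T,P}$. Finally I would collect the bad events — $\{|x_{N(t\downarrow0)}(0)|>K_1T^{1/3}\}$, $\{x^{\rho_+}_{M(t\downarrow0)}(0)\le0\}$, $\{x^{\rho_-}_{P(t\downarrow0)}(0)\ge0\}$, and the one-point events \eqref{eq1.16}, \eqref{eq1.29} needed to control $M$ and $P$ — and a union bound over $O(1)$ terms each of size $Ce^{-c\kappa}$ (the $e^{-c\kappa^2}$ and $e^{-c\kappa^3}$ terms are even smaller) yields the claim, with $C,c$ uniform in $T$ large.

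The step I expect to be the main obstacle is making the crossing/ordering argument fully rigorous: one has to verify that the endpoint data $x_{N(t\downarrow0)}(0)\le0$ and $x^{\rho_+}_{M(t\downarrow0)}(0)\ge1$, together with $x^{\rho_+}_M(t)\le x_N(t)$ at the top, really does force the two backwards paths to coincide at some intermediate time and not merely to ``swap sides'' via particles that are not on either path — i.e.\ one needs the monotonicity that the backwards path from a smaller particle lies weakly to the left of the one from a larger particle at every earlier time (a consequence of the graphical construction and of \eqref{eq2}–\eqref{eq3}), and that two such monotone $\pm1$-paths on either side of $0$ at time $0$ have a genuine intersection point. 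Once that lemma is isolated and proved, the rest is the bookkeeping of probabilities sketched above.
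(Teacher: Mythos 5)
Your overall strategy is the right one --- reduce via Lemma~\ref{lemGlobalEvents} to the single pair $(t,T)$, then force the backwards paths to cross by pinning down their endpoints at time $0$ using Lemmas~\ref{lemLocalStep} and~\ref{lemLocalStat} --- and for the event ${\cal E}_{t,M}^{T,N}$ your argument does close: the step backwards path always ends at $x_{N(T\downarrow 0)}(0)=-N(T\downarrow 0)+1\le 0$, so proving $x^{\rho_+}_{M(t\downarrow 0)}(0)>0$ with probability $\ge 1-Ce^{-c\kappa}$ is genuinely enough.

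The gap is in the treatment of ${\cal \hat E}_{t,N}^{T,P}$. There you propose to show $x^{\rho_-}_{P(\cdot\downarrow 0)}(0)<0$ (you also write $P(t\downarrow 0)$ rather than $P(T\downarrow 0)$, but the event requires the $\hat x$-path started from time $T$). But knowing $x^{\rho_-}_{P(T\downarrow 0)}(0)<0$ and $x_{N(t\downarrow 0)}(0)\le 0$ does \emph{not} force an order swap and hence does not force a crossing: both endpoints are nonpositive, and nothing in your localization prevents, say, $x^{\rho_-}_{P(T\downarrow 0)}(0)=-1$ with $x_{N(t\downarrow 0)}(0)=-K_1T^{1/3}$, in which case the paths are in the same relative order at time $0$ as at time $t$. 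So anchoring the two stationary endpoints on opposite sides of the origin is not the right invariant. What the paper does instead is compare the two endpoints \emph{to each other}: it bounds the increments $x_N(t)-x_{N(T\downarrow 0)}(0)\ge (1-2\rho_0)t-\kappa T^{1/3}$ and $x^{\rho_\pm}_{\cdot}(t)-x^{\rho_\pm}_{\cdot(\cdot\downarrow 0)}(0)\lessgtr(1-2\rho_0)t\mp\kappa t^{2/3}$, observes that $x^{\rho_\pm}_{\cdot}(t)-x_N(t)=\Or(1)$ by the choice of $M,P$, and then concludes $x^{\rho_+}_{M(t\downarrow 0)}(0)>x_{N(T\downarrow 0)}(0)$ and $x^{\rho_-}_{P(T\downarrow 0)}(0)<x_{N(t\downarrow 0)}(0)$ because the $\kappa t^{2/3}$ density margin dominates the $\kappa T^{1/3}$ localization error. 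Your picture is morally the same --- the density splitting should indeed push the $\rho_\pm$ backwards paths $\Theta(\kappa t^{2/3})$ past the step path on either side --- but your stated conclusion $x^{\rho_-}_{P(T\downarrow 0)}(0)<0$ loses that quantitative margin and is too weak to yield the crossing. The fix is either to prove $x^{\rho_-}_{P(T\downarrow 0)}(0)<-c\kappa t^{2/3}$ directly, or, as the paper does, to reformulate Lemma~\ref{lemLocalStat} as a statement about the translation-invariant increment $x^{\rho}_M(t)-x^{\rho}_{M(t\downarrow 0)}(0)$ so that it applies verbatim to any index $M$ and can be combined with the step-side increment bound; the latter also sidesteps your need to argue that the random index $M$ ``matches up to lower order'' a deterministic one before invoking the lemma.
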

\begin{proof}
By Lemma~\ref{lemGlobalEvents} it is enough to bound the probabilities of ${\cal E}_{t,M}^{T,N}$  and ${\cal \hat E}_{t,N}^{T,P}$. Lemma~\ref{lemLocalStep} with $\varkappa=0$ gives the localization of $x_{N(T\downarrow 0)}(0)$ in a $T^{1/3}$ window and Lemma~\ref{lemLocalStep} with $\varkappa$ gives the localization of $x_{N}(t)-(1-2\rho_0)t$ in a $T^{1/3}$ window. Taking for instance $K_1=K_2=\kappa/2$ we have that
\begin{equation}
\Pb(x_N(t)-x_{N(T\downarrow 0)}(0)< (1-2\rho_0)t-\kappa T^{1/3})\leq 2C e^{-c\kappa/2}.
\end{equation}
Lemma~\ref{lemLocalStat} with $t=T-\varkappa T^{2/3}$ and $K=\kappa$ gives
\begin{equation}
\begin{aligned}
&\Pb(x^{\rho_+}_M(t)-x^{\rho_+}_{M(t\downarrow 0)}(0)>(1-2\rho_+)t+\kappa t^{2/3})\\
= &\Pb(x^{\rho_+}_M(t)-x^{\rho_+}_{M(t\downarrow 0)}(0)>(1-2\rho_0)t-\kappa t^{2/3})\leq C e^{-c\kappa}.
\end{aligned}
\end{equation}
Since $x^{\rho_+}_M(t)-x_N(t)$ is $\Or(1)$, this implies that uniformly for all $T$ large enough, $x^{\rho_+}_{M(t\downarrow 0)}(0)>x_{N(T\downarrow 0)}(0)$ with probability at least $1-C e^{-c\kappa}$ (for some new constants $C,c>0$) and thus the event ${\cal E}_{t,M}^{T,N}$ occurs. Similarly one gets a bound for ${\cal \hat E}_{t,N}^{P,T}$.
\end{proof}

Proposition\ref{propComparison} implies the following comparison inequalities.
\begin{thm}\label{thmComparison}
Let us consider the setting of Proposition~\ref{propComparison}. Let $T-\varkappa T^{2/3}\leq t_1=T-\tau_1 T^{2/3}<t_2=T-\tau_2 T^{2/3}\leq T$. Then with probability at least $1-C e^{-c\kappa}$,
\begin{equation}
x^{\rho_+}_M(t_2)-x^{\rho_+}_M(t_1)\leq x_N(t_2)-x_N(t_1)\leq x^{\rho_-}_P(t_2)-x^{\rho_-}_P(t_1).
\end{equation}
\end{thm}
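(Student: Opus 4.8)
The plan is to obtain the statement as an essentially formal consequence of Proposition~\ref{propComparison} together with the two one-step comparison inequalities, Propositions~\ref{propComparison1} and~\ref{propComparison2}; no new asymptotic estimate is needed. Concretely, I would work on the event ${\cal G}$ furnished by Proposition~\ref{propComparison}, on which ${\cal E}_{t_1,M}^{t_2,N}$ and ${\cal \hat E}_{t_1,N}^{t_2,P}$ hold simultaneously for every pair $t\le t_1<t_2\le T$, and which has probability at least $1-Ce^{-c\kappa}$. Everything below is derived pathwise on ${\cal G}$, so the probability bound claimed in the theorem is then immediate.

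The one point that needs checking is that the ordering hypotheses \eqref{eqchoiceM} and \eqref{eqchoiceP} hold at the generic left endpoint $t_1$, and not merely at time $t$, where they hold by the very definition of $M$ and $P$ (which give $x^{\rho_+}_M(t)\le x_N(t)\le x^{\rho_-}_P(t)$). For this I would repeat the propagation argument from the proof of Lemma~\ref{lemGlobalEvents}: from $x^{\rho_+}_M(t)\le x_N(t)$, basic coupling keeps $x^{\rho_+}_M$ weakly to the left of $x_N$ at every later time in the sense needed for the backwards path from $x^{\rho_+}_M(t)$ to meet the one from $x_N(u)$, so ${\cal E}_{t,M}^{u,N}$ holds for all $u\in[t,T]$, and then Proposition~\ref{propComparison1} applied on $[t,u]$ yields $x^{\rho_+}_M(u)\le x_N(u)$ for all $u\in[t,T]$. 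The symmetric argument, using ${\cal \hat E}$ and Proposition~\ref{propComparison2}, gives $x_N(u)\le x^{\rho_-}_P(u)$ for all $u\in[t,T]$. In particular $x^{\rho_+}_M(t_1)\le x_N(t_1)\le x^{\rho_-}_P(t_1)$, which is exactly \eqref{eqchoiceM} and \eqref{eqchoiceP} for the interval $[t_1,t_2]$.

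With these hypotheses in place I would apply Proposition~\ref{propComparison1} to the pair $\big(x,\ \tilde x=x^{\rho_+}\big)$ on $[t_1,t_2]$, using that ${\cal E}_{t_1,M}^{t_2,N}$ holds on ${\cal G}$, to obtain $x^{\rho_+}_M(t_2)-x^{\rho_+}_M(t_1)\le x_N(t_2)-x_N(t_1)$; and Proposition~\ref{propComparison2} to the pair $\big(x,\ \hat x=x^{\rho_-}\big)$ on $[t_1,t_2]$, using ${\cal \hat E}_{t_1,N}^{t_2,P}$, to obtain $x_N(t_2)-x_N(t_1)\le x^{\rho_-}_P(t_2)-x^{\rho_-}_P(t_1)$. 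Chaining the two inequalities gives the asserted sandwich, valid on all of ${\cal G}$. One should note here only that $M$ and $P$ are random but measurably defined indices (they depend on $x_N(t)$ and on the two stationary processes), whereas Propositions~\ref{propComparison1} and~\ref{propComparison2} are stated for arbitrary fixed indices; since those propositions hold pathwise for each value of the index, the conclusion is unaffected.

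I do not expect a genuine analytic obstacle: the substance is already contained in Propositions~\ref{propComparison1}--\ref{propComparison} and in Lemma~\ref{lemGlobalEvents}, and the theorem is a repackaging of them. The only step requiring a little care is the propagation of the two orderings from time $t$ to the time $t_1$ — the step that actually uses the backwards-path construction and the basic coupling — but this is precisely the mechanism already isolated in the proof of Lemma~\ref{lemGlobalEvents}, so it can be quoted rather than redone.
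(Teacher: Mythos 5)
Your proposal is correct and matches the paper's (extremely terse, one-line) proof, which just cites Propositions~\ref{propComparison1}, \ref{propComparison2} and \ref{propComparison}; you have simply spelled out the two implicit steps — that on the event furnished by Proposition~\ref{propComparison} the orderings \eqref{eqchoiceM} and \eqref{eqchoiceP} propagate from time $t$ to the generic $t_1$ via the mechanism already isolated in the proof of Lemma~\ref{lemGlobalEvents}, and that the randomness of the indices $M,P$ is harmless because Propositions~\ref{propComparison1} and~\ref{propComparison2} are pathwise statements. Nothing more is needed.
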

\begin{proof}
This follows directly from Propositions~\ref{propComparison1},~\ref{propComparison2}, and~\ref{propComparison}.
\end{proof}

Define the constants
\begin{equation}
c_1=\frac{(1-\sqrt{\alpha})^{2/3}}{\alpha^{1/6}}\quad\textrm{and} \quad c_2=\frac{2(1-\sqrt{\alpha})^{1/3}}{\alpha^{1/3}}.
\end{equation}
The law of large numbers approximation of $x_{\alpha T}(T-c_2 \tau T^{2/3})$ is given by
\begin{equation}
\begin{aligned}
x_{\alpha T}(T-c_2\tau T^{2/3})&\simeq (1-2\sqrt{\alpha T/(T-c_2\tau T^{2/3})})(T-c_2\tau T^{2/3})\\
&\simeq \left(1-2 \sqrt{\alpha }\right) T+c_2\tau \left(\sqrt{\alpha } -1\right)T^{2/3} +\frac{1}{4} \sqrt{\alpha } c_2^2\tau^2 T^{1/3}.
\end{aligned}
\end{equation}

\begin{prop}\label{propTightness}
Let us define the rescaled process
\begin{equation}
X_T(\tau):=\frac{x_{\alpha T}(T-c_2\tau T^{2/3})-\mu(\tau,T)}{-c_1 T^{1/3}}.
\end{equation}
where $\mu(\tau,T)=\left(1-2 \sqrt{\alpha }\right) T-c_2\tau \left(1-\sqrt{\alpha}\right)T^{2/3}$.
Then we have
\begin{equation}\label{eq1.51}
\lim_{T\to\infty} X_T(\tau) = {\cal A}_2(\tau)-\tau^2,
\end{equation}
where ${\cal A}_2$ is the Airy$_2$ process. The convergence is the weak convergence on the space of continuous functions on compact intervals.
\end{prop}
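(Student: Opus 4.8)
The plan is to deduce the statement from two ingredients: the convergence of the finite-dimensional distributions of $X_T$ to those of ${\cal A}_2(\cdot)-(\cdot)^2$, which is already available for the TASEP with step initial condition (after matching the scaling constants $c_1,c_2$), and tightness of $(X_T)_T$ in $C([-L,L])$ for every $L>0$; weak convergence in $C([-L,L])$ for all $L$ is exactly weak convergence on compact intervals. Since the one-point marginals $X_T(\tau_0)$ converge in law they are tight, so by the standard criterion it remains to prove the modulus-of-continuity estimate: for every $L>0$ and $\e>0$,
\[
\lim_{\delta\to0}\ \limsup_{T\to\infty}\ \Pb\Big(\sup_{\substack{\tau_1,\tau_2\in[-L,L]\\ |\tau_1-\tau_2|\le\delta}}\big|X_T(\tau_1)-X_T(\tau_2)\big|\ge\e\Big)=0 .
\]
Below I write $\omega_Z(\delta)$ for the modulus of continuity of a function $Z$ on $[-L,L]$.

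For this I would use Theorem~\ref{thmComparison} as the main tool, applied with suitably adjusted parameters $\varkappa,T$ so that the whole time window $\{T-c_2\tau T^{2/3}:\tau\in[-L,L]\}$ is covered: on an event of probability at least $1-Ce^{-c\kappa}$ it gives, \emph{simultaneously for all pairs of times} $t_1<t_2$ in that window,
\[
x^{\rho_+}_M(t_2)-x^{\rho_+}_M(t_1)\ \le\ x_{\alpha T}(t_2)-x_{\alpha T}(t_1)\ \le\ x^{\rho_-}_P(t_2)-x^{\rho_-}_P(t_1),
\]
with $\rho_\pm=\sqrt\alpha+(\tfrac12\varkappa\sqrt\alpha\pm\kappa)T^{-1/3}+\Or(T^{-2/3})$. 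For a fixed label a stationary-TASEP trajectory $s\mapsto x^{\rho_\pm}_\cdot(s)$ is a Poisson process of rate $1-\rho_\pm$ (Burke's theorem), so subtracting the linear-in-time mean and rescaling space by $-c_1T^{1/3}$ and time by $c_2T^{2/3}$ produces processes $Y^+_T,Y^-_T$ (attached to $x^{\rho_+}_M$, resp.\ $x^{\rho_-}_P$) that converge to Brownian motions by the functional CLT for the Poisson process and are therefore tight in $C([-L,L])$. Writing $t_i=T-c_2\tau_iT^{2/3}$ and using $\mu(\tau_1,T)-\mu(\tau_2,T)=-c_2(\tau_1-\tau_2)(1-\sqrt\alpha)T^{2/3}$, the Poisson drift $(1-\rho_\pm)(t_2-t_1)$ cancels this affine centering up to an error $c_2(\tau_1-\tau_2)T^{2/3}(\rho_\pm-\sqrt\alpha)=\Or(\kappa\,|\tau_1-\tau_2|\,T^{1/3})$; dividing the full increment by $c_1T^{1/3}$ then yields, on the comparison event,
\[
\big|X_T(\tau_1)-X_T(\tau_2)\big|\ \le\ \frac{c_2R(\kappa)}{c_1}\,|\tau_1-\tau_2|\ +\ \omega_{Y^+_T}\!\big(|\tau_1-\tau_2|\big)\ +\ \omega_{Y^-_T}\!\big(|\tau_1-\tau_2|\big),
\]
with $R(\kappa)=\Or(\varkappa+\kappa)$. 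Thus $\omega_{X_T}(\delta)\le \tfrac{c_2R(\kappa)}{c_1}\delta+\omega_{Y^+_T}(\delta)+\omega_{Y^-_T}(\delta)$ on that event, and the tightness estimate follows by the usual two-parameter argument: given $\e$, for an arbitrary $\e'>0$ first pick $\kappa$ with $Ce^{-c\kappa}<\e'$ (which freezes $R(\kappa)$), then pick $\delta_0$ with $\tfrac{c_2R(\kappa)}{c_1}\delta_0<\e/2$; for $\delta<\delta_0$ the bad event lies in the complement of the comparison event together with $\{\omega_{Y^+_T}(\delta)\ge\e/4\}\cup\{\omega_{Y^-_T}(\delta)\ge\e/4\}$, so $\limsup_T\Pb(\cdots\ge\e)\le\e'+\limsup_T\Pb(\omega_{Y^+_T}(\delta)\ge\e/4)+\limsup_T\Pb(\omega_{Y^-_T}(\delta)\ge\e/4)$, and letting $\delta\to0$ and using tightness of $Y^\pm_T$ leaves $\le\e'$; since $\e'$ was arbitrary the quantity is $0$.

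The main obstacle is the modulus-of-continuity estimate itself. Granting Theorem~\ref{thmComparison}, the delicate points are: (i) using it \emph{uniformly over the pair of times}, which is precisely what Proposition~\ref{propComparison} provides via Lemma~\ref{lemGlobalEvents}; (ii) the order of the two small parameters — $\kappa$ must be taken large first (to kill the comparison-failure probability) since the density mismatch $\rho_\pm-\sqrt\alpha=\Or(\kappa T^{-1/3})$ feeds an $\Or(\kappa\delta)$ term into the bound, and only then $\delta$ small; and (iii) the comparison labels $M,P$ being random — they must either be localized (at an extra $e^{-c\kappa}$ cost) and decoupled from the trajectory oscillation by a Burke-type argument, or replaced from the outset by appropriate deterministic labels, so that $Y^\pm_T$ are genuine rescaled compensated Poisson processes. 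Everything else is classical, including the identification of the limit: the affine centering $\mu(\tau,T)$ does not remove the second-order term $\tfrac14\sqrt\alpha c_2^2\tau^2T^{1/3}$ of the law of large numbers, and since one checks $\tfrac{\sqrt\alpha c_2^2}{4c_1}=1$ this leaves precisely the parabola $-\tau^2$ on top of ${\cal A}_2(\tau)$. Combining the modulus-of-continuity bound with the known convergence of finite-dimensional distributions gives the asserted weak convergence.
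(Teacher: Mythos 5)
Your proposal is correct and follows essentially the same route as the paper's proof: both reduce to a modulus-of-continuity bound, invoke Theorem~\ref{thmComparison} (via Proposition~\ref{propComparison} for uniformity over time pairs) to sandwich increments of the rescaled tagged particle between increments of rate-$(1-\rho_\pm)$ Poisson processes, control those by Brownian-motion-level estimates (you cite the FCLT and tightness of $Y^\pm_T$; the paper does the same thing concretely via Doob's maximal inequality and exponential Chebyshev), and choose parameters in the same order, $\kappa$ large first to kill the comparison-failure probability and only then $\delta$ small. Your observations (ii) and (iii) about the parameter order and the random labels $M,P$ are exactly the subtleties the paper handles (the labels drop out because only increments of the Poisson trajectories enter), so the two arguments are the same in substance.
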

\begin{proof}
The convergence of \eqref{eq1.51} in the sense of finite-dimensional distribution is known from the comparison with the LPP, see~\cite{BP07} (for a discrete time setting, see Theorem~2-2 of~\cite{SI07}). It can be obtained also directly using the determinantal formula for the joint distributions of finitely many particles (it is a particular case of (2.23) in~\cite{BF07}). Through the connection to LPP one can get tightness~\cite{FO17} of the scaled tagged particle process $X_T$. Here we derive the tightness property directly in the particle representations without employing the LPP correspondence.

Define the modulus of continuity for the rescaled process $X_T$ by
\begin{equation}
\omega_T(\delta)=\sup_{\stackrel{0\leq \tau_1,\tau_2\leq \varkappa:}{|\tau_2-\tau_1|\leq \delta}}|X_T(\tau_1)-X_T(\tau_2)|.
\end{equation}
Since $X_T(0)$ is a tight random variable, to show tightness of the process (in the space of continuous functions on bounded intervals) we still need to control the modulus of continuity: we need to prove that for any $\e,\tilde\e>0$, there exists a $\delta>0$ and a $T_0$ such that (see Theorem 8.2 in~\cite{Bil68})
\begin{equation}\label{eq1.46}
\Pb(\omega_T(\delta)\geq \e)\leq \tilde\e,
\end{equation}
for all $T\geq T_0$. The tightness, together with convergence of finite-dimensional distributions, will imply the desired weak convergence.

Let us define the rescaled processes with densities $\rho_+$ and $\rho_-$ (defined as in Proposition~\ref{propComparison} with $t=T-\varkappa T^{2/3}$)
\begin{equation}
B^\pm_T(\tau):=\frac{x^{\rho_\pm}_{M}(T-c_2\tau T^{2/3})-\mu(\tau,T)}{-c_1 T^{1/3}}.
\end{equation}
Since $x^{\rho_+}_M(t)$ (resp.\ $x^{\rho_-}_P(t)$) is a Poisson process with parameter $1-\rho_+$ (resp.\ $1-\rho_-$), we immediately have the weak convergence to the Brownian motion (see Lemma~\ref{LemPPtoBM}), namely
\begin{equation}\label{eqBMcvg}
\lim_{T\to\infty} B^\pm_T(\tau)-B^{\pm}_T(0) = -\tau v_\pm +\sqrt{2}{\cal B}^\pm(\tau),
\end{equation}
where ${\cal B}^+$ and ${\cal B}^-$ are standard Brownian motions, and $v_\pm=(\pm\kappa + \tfrac12\sqrt{\alpha}\varkappa)c_2/c_1$ are the drifts.

For any $\e>0$ and $T$ large enough, by Proposition~\ref{propComparison},
\begin{equation}
\Pb(\omega_T(\delta)\geq \e)\leq C e^{-c\kappa} + \Pb\left(\{\omega_T(\delta)\geq \e\}\cap {\cal E}_{t,M}^{T,N}\cap {\cal\hat E}_{t,N}^{T,P}\right).
\end{equation}
On ${\cal E}_{t,M}^{T,N}\cap {\cal\hat E}_{t,N}^{T,P}$ we have the inequality
\begin{equation}
|X_T(\tau_2)-X_T(\tau_1)|\leq |B_T^+(\tau_2)-B_T^+(\tau_1)|+|B_T^-(\tau_2)-B_T^-(\tau_1)|.
\end{equation}
Choose $\delta>0$ such that the drift correction is smaller than $\e/2$; that is $|v_\pm|\delta\leq \e/2$. Hence, it is enough to bound
\begin{equation}\label{eq1.50}
\Pb\Bigg(\sup_{\stackrel{0\leq \tau_1,\tau_2\leq \varkappa:}{|\tau_2-\tau_1|\leq \delta}} |B_T^+(\tau_2)-B_T^+(\tau_1)+v_+ (\tau_2-\tau_1)|\geq \e/4\Bigg)
\end{equation}
and the same for $B_T^+$ replaced by $B_T^-$. Dividing the interval $[0,\varkappa]$ in pieces of length $\delta$, we have
\begin{equation}\label{eq1.52}
\eqref{eq1.50}\leq \frac{\varkappa}{\delta}\Pb\left(\sup_{0\leq \tau \leq \delta} |B_T^+(\tau)-B_T^+(0)+v_+ \tau|\geq \e/12\right)\leq \frac{\varkappa}{\delta}e^{-c\e^2/\delta},
\end{equation}
for some constant $c>0$, uniformly for all $T$ large enough. The bound is obtained by a standard computation with Doob's maximal inequality followed by the exponential Chebyshev inequality. The reason is that $B_T^+(\tau)-B_T^+(0)+v_+ \tau$ converges weakly to a Brownian motion by Donsker's theorem. A similar bound is obtained for $B_T^-$ instead of $B_T^+$.

Thus, for a fixed $\tilde\e$, take $\kappa>0$ large enough such that $C e^{-c\kappa}\leq \tilde\e/2$. Then take $\delta$ small enough so that $\eqref{eq1.52}\leq \tilde\e/2$. This implies \eqref{eq1.46} and thus we have tightness, which was the missing ingredient off the weak convergence.
\end{proof}

\section{Relation between TASEPs with a wall and without constraints}\label{SectRelationWall}
In the following proposition we show that, for the step initial condition, the one-point distribution for a TASEP with a boundary wall moving to the right deterministically can be related with the distribution of another TASEP without constraints.
\begin{prop}\label{PropStartingIdentity}
Let $f$ be a non-decreasing function on $\R_{\geq 0}$ with $f(0) = 0$. Let $x^f(t)$ be a TASEP starting with the step initial condition, i.e., $x^f_k(0)=-k+1$, $k\geq 1$, but with its first particle blocked by a wall which has position $f(t)$ at time $t$, i.e., $x^f_1(t)\leq f(t)$ for any time $t\geq 0$ \footnote{In other words, the jumps of the first particle that would violate the constraint $x^f_1(t) \leq f(t)$ are suppressed.}. Then for any $n \ge 1$ we have
\begin{equation}\label{eqStartingPoint}
\Pb(x^f_n(T) > s)=\Pb(x_n(t)> s-f(T-t)\textrm{ for all }t\in [0,T]),
\end{equation}
where $x(t)$ is a TASEP with the step initial condition evolving without the wall constraint.
\end{prop}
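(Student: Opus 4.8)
The plan is to deduce \eqref{eqStartingPoint} from the color-position symmetry of the multi-species TASEP, after realizing the wall-constrained process $x^f$ as an \emph{unconstrained} TASEP carrying one extra, deterministically moving, right-most particle. (For a step function $f$, \eqref{eqStartingPoint} amounts to the distributional identity $x^f_n(T)\stackrel{\mathrm d}{=}\min_{0\le t\le T}\{x_n(t)+f(T-t)\}$, which exhibits the constrained position of particle $n$ as an envelope of its free trajectory against the wall; color-position symmetry is the device producing such an envelope coupling.) Before invoking the symmetry I would reduce to the case where $f$ is integer-valued, right-continuous, and a step function with unit jumps at finitely many distinct times $0<t_1<\dots<t_K\le T$: a general non-decreasing $f$ is recovered by a monotone approximation $f^{(m)}\uparrow f$, since both sides of \eqref{eqStartingPoint} are monotone in $f$ (on the left because a higher wall is a weaker constraint, realizable pathwise via the basic coupling; on the right because a larger $f$ lowers the barrier $s-f(T-t)$) and the trajectories $x_n(\cdot)$ and the wall are right-continuous pure-jump paths; splitting jumps of size $\ge2$ into coincident unit jumps and separating coinciding $t_j$ are further routine limits.

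The next step encodes the wall as a particle. Append to the step configuration a right-most particle at site $1$ at time $0$, so the initial data is again of step type, now on $\Z_{\le1}$, and relabel so that the original particle $n$ becomes the $(n+1)$-st particle of the augmented system $y^{(\e)}$. Equip this ``wall particle'' with a degenerate, time-inhomogeneous clock: rate $\e^{-1}$ on each window $[t_j,t_j+\e]$ and rate $0$ elsewhere, all other edges keeping their usual independent rate-$1$ Poisson clocks. Since the wall particle is always right-most, every ring of its clock produces a jump, so as $\e\to0$ it makes exactly one jump near each $t_j$ and its path converges, in probability on path space, to the deterministic staircase $t\mapsto f(t)+1$; hence $y^{(\e)}$ converges to the basic coupling of $x^f$ with that staircase and $\Pb(x^f_n(T)>s)=\lim_{\e\to0}\Pb(y^{(\e)}_{n+1}(T)>s)$.

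The heart of the argument is to apply the color-position symmetry (\cite{AHR09,AAV11,BB19,B20,G20}) to $y^{(\e)}$, a bona fide TASEP from step-type initial data with deterministic, possibly time-inhomogeneous, rates. The symmetry yields a coupling of the colored lift of $y^{(\e)}$ with a reflected colored process in which the label and space coordinates swap roles; under it the constraint ``the tagged particle $n+1$ is to the right of $s$ at the fixed time $T$'' is matched with a constraint on the reflected process in which the right-most particle -- the wall particle -- is \emph{not} mapped to a particle but to a moving barrier whose space-time trajectory is that of the wall particle run backwards in time, hence, in the $\e\to0$ limit, located at dual-time $t$ at position $s-\#\{j:t_j\le T-t\}=s-f(T-t)$. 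The remaining reflected particles, carrying standard Poisson bulk clocks and issued from a step initial condition, form the ordinary step TASEP $x$ of the statement, and after matching the indices on the two sides the dual event becomes ``$x_n(t)>s-f(T-t)$ for all $t\in[0,T]$''. Inserting this into the color-position identity for every $\e$ and letting $\e\to0$ -- the deterministic instants $T-t_j$ being a.s.\ not jump times of $x_n(\cdot)$, so that the barrier event passes to the limit -- gives \eqref{eqStartingPoint}.

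The main obstacle is precisely this passage: realizing a \emph{deterministically} moving wall inside the Markovian framework to which the color-position symmetry genuinely applies, and then tracking through the symmetry how the wall particle is transmuted into the moving barrier. Concretely, one must fix the exact form of the symmetry on $\Z$ in the presence of one distinguished, inhomogeneous edge, correctly match the label and position indices on the two sides, keep the strict-versus-weak inequalities and the $\pm1$ shift between ``wall at $f(t)$'' and ``wall particle at $f(t)+1$'' consistent, and justify interchanging the limits $\e\to0$ and $f^{(m)}\to f$ with the symmetry. By comparison, the two approximation steps and the weak convergence of $y^{(\e)}$ are soft.
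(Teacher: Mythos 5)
Your plan and the paper share the same driving engine, the color-position symmetry of the multi-species TASEP, but you diverge at exactly the step that you yourself flag as ``the main obstacle,'' and I do not believe your route through it works. The paper does \emph{not} encode the wall as an extra particle. Instead it encodes the wall as a spatial suppression rule on the update mechanism itself: the process $\hat\eta_{t;f}$ is the colored TASEP in which all swap attempts involving positions $\geq f(t)$ are discarded. Because the color-position symmetry (Proposition~\ref{prop:sym-basic}) is a pathwise identity between a word in the $W_{(z,z+1)}$'s and its reversal, and because the bulk Poisson clocks are time-reversal invariant, this suppression rule simply reverses in time: applying the symmetry at time $T$ yields $\mathrm{inv}(\hat\eta_{T;f})\stackrel{d}{=}\eta_{T;f,T}$, the process in which the wall moves according to $f(T-t)$ (Proposition~\ref{prop:inverTime}). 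Your wall-as-a-particle encoding runs into two concrete difficulties. First, you attach a degenerate, time-inhomogeneous clock to the wall particle; after applying the symmetry this produces an inhomogeneous clock at different edges and times, and the clean time-reversal argument that underlies the probabilistic form of the symmetry no longer applies without extra work, which you do not supply. Second, under the symmetry $\eta\mapsto\mathrm{inv}(\eta)$ a particle is mapped to a particle (the roles of color and position swap), so the assertion that the wall particle ``is not mapped to a particle but to a moving barrier'' is not a consequence of the symmetry in its known form -- that barrier arises in the paper from the reversed \emph{suppression rule}, not from a particle.

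A second, independent gap: after inverting, dualizing, and shifting coordinates, what one obtains is a \emph{constrained} TASEP $y$ -- step initial data, but with all jumps strictly to the left of the moving wall $s-f(T-t)$ suppressed. The equality $\Pb(y_n(T)>s)=\Pb(x_n(t)>s-f(T-t)\text{ for all }t\in[0,T])$, where $x$ is the \emph{unconstrained} step TASEP, is a separate coupling argument, which the paper carries out by coupling $x$ and $y$ and observing that any $y$-particle that ever touches the (non-decreasing in $t$) barrier $s-f(T-t)$ stalls there forever and so cannot reach $s$. Your proposal jumps directly from the reflected process to the unconstrained $x$ and asserts the barrier event, without providing this coupling step. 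Finally, note that the paper needs neither the monotone staircase approximation of $f$ nor the $\e\to 0$ clock limit; these are unnecessary once the wall is treated as a suppression rule rather than as a particle.
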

The proof of Proposition~\ref{PropStartingIdentity} is given in Section~\ref{proofStartingIdentity} below.

\begin{remark}
In Proposition~\ref{proofStartingIdentity}, we can replace the condition $f(0)=0$ with $f(0)\geq 0$, in which case the identity \eqref{eqStartingPoint} has to be replaced by
\begin{equation}
\Pb(x^f_n(T) > s)=\Pb(x_n(t)> s-f(T-t)\textrm{ for all }t\in [0,T], x_n(T)>s).
\end{equation}
\end{remark}

\subsection{Updates of multi-species TASEPs}
We start with a description of a colored (or multi-species, or multi-type) version of TASEP. We consider an interacting particle system in which particles live on the integer lattice $\Z$ and each integer location contains exactly one particle. The set of colors is taken to be $\Z \cup \{ +\infty\}$.

A particle configuration is a map $\eta: \Z \to \Z \cup \{ +\infty \}$, where we call $\eta(z)$ the \emph{color} of a particle at $z\in\Z$. When $\eta(z)=+\infty$ we will think of $z$ as being empty. Let $\mathfrak C$ be the set of all configurations. For a transposition $(z,z+1)$ with $z \in \Z$, let $\sigma_{(z,z+1)}: \mathfrak C \to \mathfrak C$, be a swap operator defined by
\begin{equation}
(\sigma_{(z,z+1)} \eta) (i) =
\begin{cases}
\eta(z+1), \qquad & i=z, \\
\eta(z), \qquad & i=z+1, \\
\eta(i), \qquad & i \in \Z \backslash \{ z,z+1 \}.
\end{cases}
\end{equation}
Define a \textit{totally asymmetric swap} operator at $z\in \Z$, $W_{(z,z+1)}: \mathfrak C \to \mathfrak C$, via
\begin{equation}
(W_{(z,z+1)} \eta) =
\begin{cases}
\eta, \qquad & \mbox{if $\eta(z) \ge \eta(z+1)$}, \\
\sigma_{(z,z+1)} \eta, \qquad & \mbox{if $\eta(z) < \eta(z+1)$}.
\end{cases}
\end{equation}
In words, the operator $\sigma_{(z,z+1)}$ exchanges the colors of particles at $z, z+1$, and $W_{(z,z+1)}$ realizes that exchange only if the particle that is initially on the left (at $z$) has a smaller color.

Any bijection of integers $s: \Z \to \Z$ can be viewed as a particle configuration by setting $\eta(z)=s(z)$. Such particle configurations will be especially important for us because of the following result.

\begin{prop}\label{prop:sym-basic}
Let $\mathrm{id} : \Z \to \Z$ be the identity bijection. Then, for any $k \in \Z$ and for any integers $z_1, \dots, z_k$ one has
\begin{equation}
W_{(z_k,z_k+1)} \dots W_{(z_2,z_2+1)} W_{(z_1,z_1+1)} \id= \mathrm{inv} \left( W_{(z_1,z_1+1)} W_{(z_2,z_2+1)} \dots W_{(z_k,z_k+1)} \id \right),
\end{equation}
where in the right-hand side $\mathrm{inv}$ denotes the map of taking the inverse of a permutation.
\end{prop}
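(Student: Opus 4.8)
The plan is to prove the identity by induction on $k$, the number of swap operators. The base case $k=0$ is trivial since $\id^{-1}=\id$. For the inductive step, I would peel off the outermost operator on the left-hand side, namely $W_{(z_k,z_k+1)}$, and correspondingly the innermost on the right-hand side. Writing $\sigma=W_{(z_{k-1},z_{k-1}+1)}\cdots W_{(z_1,z_1+1)}\id$, the induction hypothesis gives $W_{(z_{k-1},z_{k-1}+1)}\cdots W_{(z_1,z_1+1)}\id=\mathrm{inv}\bigl(W_{(z_1,z_1+1)}\cdots W_{(z_{k-1},z_{k-1}+1)}\id\bigr)$, so writing $\eta=W_{(z_1,z_1+1)}\cdots W_{(z_{k-1},z_{k-1}+1)}\id$ we have $\sigma=\eta^{-1}$ (as permutations). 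What we must show is then $W_{(z_k,z_k+1)}\eta^{-1}=\mathrm{inv}\bigl(W_{(z_k,z_k+1)}\eta\bigr)$, i.e. that the totally asymmetric swap on a bijection $\eta$ commutes with the operation of inverting the permutation, in the precise sense that applying $W$ at position $(z_k,z_k+1)$ to $\eta^{-1}$ yields the inverse of $W_{(z_k,z_k+1)}\eta$.

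So the heart of the matter is the following single-swap claim: for any bijection $\eta:\Z\to\Z$ and any $z\in\Z$, one has $W_{(z,z+1)}(\eta^{-1})=(W_{(z,z+1)}\eta)^{-1}$. I would prove this by direct case analysis. Since $\eta$ is a bijection, $\eta(z)$ and $\eta(z+1)$ are distinct integers; let $a=\eta(z)$, $b=\eta(z+1)$. The operator $W_{(z,z+1)}$ acting on $\eta$ swaps the \emph{values} at sites $z$ and $z+1$ exactly when $a<b$. On the other hand, $\eta^{-1}$ is the bijection with $\eta^{-1}(a)=z$ and $\eta^{-1}(b)=z+1$; the operator $W_{(z,z+1)}$ acting on $\eta^{-1}$ compares the values $\eta^{-1}(z)$ and $\eta^{-1}(z+1)$ — but this is not obviously the same comparison. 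The key observation that makes it work is that $W_{(z,z+1)}$ applied to a \emph{bijection} can be reinterpreted: swapping the colors at sites $z,z+1$ is the same as swapping, in the inverse permutation, the positions (i.e. output values) $z$ and $z+1$. Concretely, $\sigma_{(z,z+1)}\eta = (z\ z+1)\circ\eta$ as a composition of functions (first apply $\eta$, then the transposition $(z\ z+1)$ acting on the target), whereas pre-composition corresponds to the analogous swap-of-positions operator. Taking inverses, $(\,(z\ z+1)\circ\eta\,)^{-1} = \eta^{-1}\circ(z\ z+1)$.

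With this dictionary, I would unwind both sides. If $\eta(z)<\eta(z+1)$ then $W_{(z,z+1)}\eta=(z\ z+1)\circ\eta$, so $(W_{(z,z+1)}\eta)^{-1}=\eta^{-1}\circ(z\ z+1)$. One then checks that in this case $\eta^{-1}$, evaluated at $z$ and at $z+1$, satisfies the comparison that triggers (or doesn't trigger) the swap in $W_{(z,z+1)}\eta^{-1}$ so that $W_{(z,z+1)}\eta^{-1}$ also equals $\eta^{-1}\circ(z\ z+1)$; and symmetrically if $\eta(z)>\eta(z+1)$ both sides equal $\eta^{-1}$. This requires being careful about which of $z,z+1$ is the preimage of the smaller value under $\eta$, but it is a finite check with essentially two cases (plus the observation that equality $\eta(z)=\eta(z+1)$ is impossible for a bijection, which is precisely why the argument is cleaner for bijections than for general configurations).

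The main obstacle I anticipate is purely bookkeeping: keeping straight the distinction between the swap acting on sites (left multiplication by the transposition on the domain of $\eta^{-1}$, or equivalently on the range of $\eta$) versus acting on colors, and verifying that the inequality $\eta(z)<\eta(z+1)$ that governs $W$ on $\eta$ translates correctly into the inequality governing $W$ on $\eta^{-1}$. Once the function-composition picture $\sigma_{(z,z+1)}\eta=(z\ z+1)\circ\eta$ is in place, the case analysis is short, and the induction closes immediately. No analytic input is needed; this is entirely a combinatorial identity about permutations and the sorting-network interpretation of totally asymmetric swaps.
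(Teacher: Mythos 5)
Your reduction misidentifies the induction step, and the single-swap lemma you try to reduce to is false; both issues trace to the same error in your composition formula. On the right-hand side of the proposition the operator $W_{(z_k,z_k+1)}$ is applied \emph{first} (it is directly adjacent to $\id$), not last. After peeling $W_{(z_k,z_k+1)}$ off the left and invoking the induction hypothesis, what remains to be shown is $W_{(z_k,z_k+1)}\eta^{-1}=\mathrm{inv}\bigl(W_{(z_1,z_1+1)}\cdots W_{(z_{k-1},z_{k-1}+1)}\,W_{(z_k,z_k+1)}\id\bigr)$, which is the inversion of a $W$-chain started at the new permutation $W_{(z_k,z_k+1)}\id$, not at $\eta$; the induction hypothesis, being stated only for the start configuration $\id$, does not simplify this. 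Independently, the commutation $W_{(z,z+1)}(\eta^{-1})=(W_{(z,z+1)}\eta)^{-1}$ is simply false: take $\eta$ with one-line word $[2,3,1]$ and $z=1$. Then $\eta(1)<\eta(2)$, so $W_{(1,2)}\eta=[3,2,1]$, which is its own inverse; but $\eta^{-1}=[3,1,2]$ has $\eta^{-1}(1)>\eta^{-1}(2)$, so $W_{(1,2)}\eta^{-1}=[3,1,2]\ne[3,2,1]$.

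The root of both problems is the composition identity: swapping the occupants of sites $z,z+1$ is \emph{pre}composition, $\sigma_{(z,z+1)}\eta=\eta\circ(z\;z{+}1)$, not $(z\;z{+}1)\circ\eta$ as you wrote. Inverting gives $(\eta\circ(z\;z{+}1))^{-1}=(z\;z{+}1)\circ\eta^{-1}$, which swaps the \emph{positions of the colors} $z$ and $z+1$ rather than the colors at adjacent sites; this is a genuinely different operator, call it $\widehat W_z$. The correct one-step relation is $\mathrm{inv}(W_{(z,z+1)}\eta)=\widehat W_z(\mathrm{inv}\,\eta)$, asymmetric between the two sides. Even granted this, the proposition is not a trivial induction: unwinding the inverse yields $\mathrm{inv}(W_{(z_1,z_1+1)}\cdots W_{(z_k,z_k+1)}\id)=\widehat W_{z_1}\cdots\widehat W_{z_k}\id$, and one still has to show this equals $W_{(z_k,z_k+1)}\cdots W_{(z_1,z_1+1)}\id$, i.e.\ that accumulating the simple swaps from the left or from the right, starting at $\id$, produces the same permutation. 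That equality is precisely the associativity of the Demazure (0-Hecke) product, and it is the nontrivial combinatorial input the paper defers to~\cite{AHR09} and the Hecke-algebra literature rather than proving directly.
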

In a probabilistic setting, Proposition~\ref{prop:sym-basic} was proved in~\cite[Lemma 2.1]{AHR09}, see~\cite{AAV11} and~\cite{BB19} for generalizations. In an equivalent algebraic setting, it turns out to be a well-known involution in the Hecke algebra, see~\cite{B20},~\cite{G20}.

\subsection{Two multi-species TASEPs}\label{proofStartingIdentity}
Now let us define a continuous-time multi-species TASEP. Consider a collection of jointly independent Poisson processes $\{ \mathcal P (z) \}_{z \in \Z}$, where $\mathcal P (z)$ has a state space $\R_{\ge 0}$ and rate $1$. Let $\eta_0 \in \mathfrak C$ be a (either deterministic or random) particle configuration which plays the role of an initial condition. Of course, in the random case, the initial distribution is assumed to be independent of the Poisson processes. We define a continuous-time stochastic evolution $\{ \eta_t \}_{t \in \R_{\ge 0}}$, $\eta_t \in \mathfrak C$, by applying $W_{(z,z+1)}$ at each time $t$ that is an event of the Poisson process $\mathcal P (z)$. More explicitly, particle at $z$ exchanges its position with particle at site $z+1$ if its color has a lower value. It is readily shown via standard techniques that under our assumptions such a random process is well-defined, see~\cite{Har78,Har72,Hol70,Lig72}.

We will need two versions of the continuous-time multi-species TASEP. Both of them will start with the initial configuration
$\id$. The first version $\{\hat \eta_{t;f} (z) \}$ is obtained from the previous construction by adding a constraint: we suppress all jumps which involve positions greather or equal to $f(t)$. In other words, this is a multi-species TASEP with a wall which moves according to a speed profile $f(t)$. The second version $\{\eta_{t;f,T} (z) \}$ is defined only for times $t \in [0,T]$, and in the same way, but the wall moves now according to the speed profile $f(T-t)$. Note that for the first process the wall moves to the right, and for the second process it moves to the left due to our assumption that $f$ is monotonously non-decreasing.

\begin{prop}\label{prop:inverTime}
For any $T \in \R_{\ge 0}$ we have
\begin{equation}
\{ \mathrm{inv} \left( \hat \eta_{T;f} \right) (z) \}_{z \in \mathbb{Z}} \stackrel{d}{=} \{\eta_{T;f,T} (z) \}_{z \in \mathbb{Z}}.
\end{equation}
\end{prop}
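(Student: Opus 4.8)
The plan is to deduce Proposition~\ref{prop:inverTime} from the deterministic involution of Proposition~\ref{prop:sym-basic} by a coupling argument. The key point is that both processes $\hat\eta_{\bullet;f}$ and $\eta_{\bullet;f,T}$ start from the identity configuration $\id$, and both are built from sequences of totally asymmetric swaps $W_{(z,z+1)}$ driven by the Poisson clocks $\{\mathcal P(z)\}_{z\in\Z}$, the only difference being the time-dependent restriction on which swaps are allowed. So the first step is to discretize: condition on a realization of the Poisson processes, and observe that on $[0,T]$ only finitely many clock events in a bounded spatial window matter for reconstructing any finite-dimensional marginal (a localization/finite-speed-of-propagation remark, which one can make rigorous by the standard graphical-construction arguments cited in the text, or by a limiting argument from a truncated system). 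This reduces the claim to a purely combinatorial identity about a finite word $W_{(z_1,z_1+1)}\cdots W_{(z_k,z_k+1)}$ applied to $\id$.

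The second step is to set up the correct correspondence between the two clock sequences. For $\hat\eta_{\bullet;f}$, reading the events of $\{\mathcal P(z)\}$ in increasing order of time on $[0,T]$ gives a sequence $(z_1,t_1),(z_2,t_2),\dots,(z_k,t_k)$ with $t_1<\dots<t_k$, and the dynamics applies $W_{(z_j,z_j+1)}$ at time $t_j$, subject to the constraint ``$z_j+1 < f(t_j)$'' (equivalently $z_j,z_j+1$ are strictly left of the wall at time $t_j$). For $\eta_{\bullet;f,T}$ the wall at time $t$ is at $f(T-t)$, so the allowed swap at time $t_j$ is subject to ``$z_j+1<f(T-t_j)$''. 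The crucial observation is that if we \emph{reverse time}, setting $t_j' = T - t_{k+1-j}$, the sequence of $\eta_{\bullet;f,T}$-events in forward order is $(z_{k},t_1'),(z_{k-1},t_2'),\dots,(z_1,t_k')$, i.e.\ the same spatial positions in the \emph{opposite order}, and the constraint at $t_j'$ becomes ``$z_{k+1-j}+1 < f(T - t_j') = f(t_{k+1-j})$'' — which is exactly the constraint that governed the corresponding event in $\hat\eta_{\bullet;f}$. Hence the set of swaps that actually get \emph{executed} (not merely attempted) in the reversed $\eta_{\bullet;f,T}$-process is the reverse-order list of the swaps executed in $\hat\eta_{\bullet;f}$, and both lists carry the same spatial labels. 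Since the law of the (forward-time-ordered) list of clock events is invariant under the measure-preserving time-reversal $t\mapsto T-t$ of a Poisson process on $[0,T]$, it suffices to prove the identity for a fixed such list.

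The third step is then to invoke Proposition~\ref{prop:sym-basic} directly: with $(z_1,\dots,z_k)$ the spatial labels of the executed swaps in $\hat\eta_{\bullet;f}$ listed in forward time order, we have $\hat\eta_{T;f} = W_{(z_k,z_k+1)}\cdots W_{(z_1,z_1+1)}\id$, while $\eta_{T;f,T}$ (with the reversed clock list and the matched constraints) equals $W_{(z_1,z_1+1)}\cdots W_{(z_k,z_k+1)}\id$. Proposition~\ref{prop:sym-basic} says precisely that the former is $\mathrm{inv}$ of the latter, giving $\mathrm{inv}(\hat\eta_{T;f}) = \eta_{T;f,T}$ on this event. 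Averaging over the (common, time-reversal-invariant) law of the clock configuration yields the claimed equality in distribution.

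I expect the main obstacle to be a careful treatment of the constraint bookkeeping together with the time-reversal, namely verifying cleanly that ``suppressed-at-time-$t$ by the wall at $f(t)$'' for the forward process matches ``suppressed-at-time-$T-t$ by the wall at $f(T-t)$'' for the backward process, so that the \emph{executed} swap words are exact reverses of one another with identical labels; the monotonicity of $f$ and the fact that the wall only \emph{forbids} swaps (it never changes which swap is attempted, only whether it fires) are what make this work, but one must also handle the boundary case where $z_j+1$ equals $\lfloor f(t_j)\rfloor$ or the two constraints disagree on a measure-zero set of times, which is harmless since Poisson events land at prescribed-wall-position times with probability zero. A secondary, purely technical point is justifying the reduction to finitely many swaps: this is the standard well-posedness of the graphical construction for these exclusion-type dynamics (references already given in the text: \cite{Har78,Har72,Hol70,Lig72}), and can be quoted rather than reproved.
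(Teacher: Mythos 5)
Your proof is correct and is exactly the argument the paper intends; the paper's own proof is the single sentence ``Directly follows from Proposition~\ref{prop:sym-basic}. Note that the inversion of time plays a crucial role.'' Conditioning on the Poisson realization, noting that the reflection $t\mapsto T-t$ on $[0,T]$ is measure-preserving and converts the wall constraint for $\hat\eta_{\cdot;f}$ into the one for $\eta_{\cdot;f,T}$ so that the two unsuppressed swap words are exact reverses of one another with identical spatial labels, and then invoking the deterministic involution of Proposition~\ref{prop:sym-basic} is precisely the intended route --- your write-up just spells out the standard bookkeeping (including the reduction to finitely many swaps via the graphical construction); the only point to keep an eye on is that by ``executed (not merely attempted)'' you must mean the wall-\emph{permitted} clock events, not the subset of those where the color comparison actually fires, since it is the full unsuppressed word of $W$-operators that one feeds into Proposition~\ref{prop:sym-basic}.
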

\begin{proof}
Directly follows from Proposition~\ref{prop:sym-basic}. Note that the inversion of time plays a crucial role.
\end{proof}

Now we are ready to prove Proposition~\ref{PropStartingIdentity}.

\begin{proof}[Proof of Proposition~\ref{PropStartingIdentity}]
Note that for any $n \geq 1$ we have
\begin{equation}
\Pb \left( x_n^f (T) > s \right) = \Pb \left( \exists \mbox{ at least $n$ numbers $i \le 0 : \mathrm{inv} \left( \hat \eta_{T;f} \right) (i) > s$} \right).
\end{equation}
Indeed, note that the process $\hat \eta_{T;f} (z)$ can be coupled with a single species TASEP in the following way: we consider colors $\le 0$ as particles, and colors $> 0$ as holes. Both sides of the equality above represent the same event under such a coupling.

Next, Proposition~\ref{prop:inverTime} implies that
\begin{multline*}
\Pb \left(\exists \mbox{ at least $n$ numbers $i \le 0 : \mathrm{inv} \left( \hat \eta_{T;f} \right) (i) > s$} \right)
\\
= \Pb \left( \exists\mbox{ at least $n$ numbers $i \le 0 : \eta_{T;f,T} (i) > s$} \right)
\end{multline*}
In words, the right-hand side contains the probability of the event that at the end of the process $\eta_{T;f,T}$ there are at least $n$ particles of color $>s$ inside the interval $(-\infty,0]$. This event can also be observed through the coupling with a single-species TASEP: we need to view all colors $> s$ as holes, and all colors less or equal than $s$ as particles. Let us apply the standard particle-hole duality for this single-species process, and change the coordinates $z \mapsto s-z$. We obtain the single-species TASEP (notation for its particle positions: $y_1 (t) > y_2 (t) > \dots$) which starts from the step initial condition $y_k(0) = -k+1$, for all $k \ge 1$, and the updates in this process are allowed only strictly \textit{to the right} of the wall, which moves according to profile $s - f(T-t)$. Thus, we get
\begin{equation}
\Pb \left(\exists \mbox{ at least $n$ numbers $i \le 0 : \eta_{T,f,T} (i) > s$} \right) = \Pb \left( y_n (T) > s \right).
\end{equation}
It remains to show that
\begin{equation}
\Pb \left( y_n (T) > s \right) =  \Pb \left( x_n (t) > s - f \left( T-t \right)\, \forall t \in [0,T] \right).
\end{equation}
In order to prove this, let us couple the processes associated with particles $\{ x_k (t) \}$ and $\{ y_k (t) \}$ via the basic coupling in the interval $(s-f(T-t), +\infty)$ for any $t \in [0,T]$. All $\{ y_k \}$ particles which happen to be to the left of $s-f(T-t)$ at some time $t$ stop forever, since $s-f(T-t)$ is a monotonously non-decreasing function. All $\{ x_k \}$ particles continue to jump at all positions, however, the evolution of particles with larger numbers does not affect the evolution of particles with smaller numbers. Thus, under such a coupling, we have
\begin{itemize}
\item[(a)] if $x_n (t) > s - f \left( T-t \right) \forall t \in [0,T]$, then
\begin{equation}
y_n (t) = x_n (t), \forall t \in [0,T],
\end{equation}
\item[(b)] if for some $t_0\in [0,T]$, $x_n (t_0) \leq s - f \left( T-t_0 \right)$, then (since $f(0)= 0$)
\begin{equation}
y_n(T) = y_n (t_0) \leq s - f(T-t_0) \le s.
\end{equation}
\end{itemize}
Therefore, the event $\{ y_n (T) > s \}$ happens if and only if the event $\{ x_n (t) > s - f \left( T-t \right) \mbox{ for all }t \in [0,T]\}$ happens.
This concludes the proof.
\end{proof}

\section{Scaling limit}\label{SectScalingLimits}

\subsection{Single region away from the boundary time}\label{SectBulkCase}
Consider the following setting. Let us fix an $\alpha\in (0,1)$ and consider, in the TASEP with step initial condition, the position of particle number $n=\alpha T$ at time $T$. Without a wall in front of the first particle, we would have $x_{\alpha T}(T)=(1-2\sqrt{\alpha})T+\Or(T^{1/3})$. If we have an influence of the wall around the time $\alpha_0 T$ for some $\alpha_0\in(\alpha,1)$ so that the effect is visible on a macroscopic scale, then we have to look at a scaling $x^f(T)\simeq \xi T$ for some $\xi<1-2\sqrt{\alpha}$. Of course, since we want to still have some fluctuations, particle number $\alpha T$ should have already moved at time $T$, i.e., we also have $\xi>-\alpha$.

Let us fix some $\alpha_0\in (\alpha,1)$. The case $\alpha_0=1$ is discussed in Section~\ref{SectBoundaryCase}. From the law of large numbers, we have (see Lemma~\ref{lemLocalStep})
\begin{equation}\label{eqLLN}
x_{\alpha T}(\alpha_0 T)\simeq \left\{\begin{array}{ll} \sqrt{\alpha_0} (\sqrt{\alpha_0}-2\sqrt{\alpha}) T, & \alpha_0\in [\alpha, 1],\\ -\alpha T,& \alpha_0\in [0,\alpha),\end{array}\right.
\end{equation}
where the second case is simply because particle number $\alpha T$ start moving only around time $\alpha T$.
The rescaled process around time $t=\alpha_0 T$ is given by
\begin{equation}\label{eqRescStep}
\widetilde X_T(\tau):=\frac{x_{\alpha T}(\alpha_0 T- \tilde c_2 \tau T^{2/3})-\tilde \mu(\tau,T)}{-\tilde c_1 T^{1/3}},
\end{equation}
where the constants $\tilde c_1,\tilde c_2$ are given by
\begin{equation}\label{eqConstantsTilde}
\tilde c_1=\frac{(\sqrt{\alpha_0}-\sqrt{\alpha})^{2/3}\alpha_0^{1/6}}{\alpha^{1/6}},\quad \tilde c_2=\frac{2(\sqrt\alpha_0-\sqrt\alpha)^{1/3}\alpha_0^{5/6}}{\alpha^{1/3}}
\end{equation}
and the function $\tilde \mu(\tau,T)$ by
\begin{equation}
\tilde\mu(\tau,T)=\sqrt{\alpha_0}(\sqrt{\alpha_0}-2\sqrt{\alpha}) T-2\tau \frac{(\sqrt\alpha_0-\sqrt\alpha)^{4/3}\alpha_0^{1/3}}{\alpha^{1/3}} T^{2/3}.
\end{equation}
Note the relation $\tilde c_2=2\tilde c_1^2 \sqrt{\alpha_0}/(\sqrt{\alpha_0}-\sqrt{\alpha})$.

For any fixed $\alpha_0\in(\alpha,1]$, a simple rescaling of the result of Proposition~\ref{propTightness} implies the following weak convergence result.
\begin{cor}\label{CorWeakConvergence}Let $\alpha_0\in (\alpha,1]$ be fixed. Then, we have
\begin{equation}\label{eq1.62}
\lim_{T\to\infty} \widetilde X_T(\tau)={\cal A}_2(\tau)-\tau^2.
\end{equation}
The convergence is the weak convergence in the space of continuous functions on compact intervals.
\end{cor}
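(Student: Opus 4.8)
The plan is to reduce the statement to Proposition~\ref{propTightness} by a deterministic time-rescaling of the TASEP. The key observation is that tracking particle number $\alpha T$ at times near $\alpha_0 T$ is the same as tracking particle number $\alpha' T'$ at times near $T'$, where $T'=\alpha_0 T$ and $\alpha'=\alpha/\alpha_0$. Since $\alpha_0\in(\alpha,1]$ we have $\alpha'\in(0,1)$, so Proposition~\ref{propTightness} applies with the pair $(\alpha',T')$ in place of $(\alpha,T)$; note that $\alpha'T'=\alpha T$ exactly, so the tagged particle is literally the same one and no floor/ceiling discrepancy arises.

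First I would write down, from Proposition~\ref{propTightness} applied to $(\alpha',T')$, that
\begin{equation*}
\lim_{T'\to\infty}\frac{x_{\alpha T}\!\left(T'-c_2(\alpha')\tau (T')^{2/3}\right)-\mu'(\tau,T')}{-c_1(\alpha')(T')^{1/3}}={\cal A}_2(\tau)-\tau^2
\end{equation*}
weakly in the space of continuous functions on compact intervals, where $c_1(\alpha')=\tfrac{(1-\sqrt{\alpha'})^{2/3}}{(\alpha')^{1/6}}$, $c_2(\alpha')=\tfrac{2(1-\sqrt{\alpha'})^{1/3}}{(\alpha')^{1/3}}$ and $\mu'(\tau,T')=(1-2\sqrt{\alpha'})T'-c_2(\alpha')\tau(1-\sqrt{\alpha'})(T')^{2/3}$. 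Then I would substitute $T'=\alpha_0 T$ and simplify the elementary powers of $\alpha_0$, using $1-\sqrt{\alpha'}=(\sqrt{\alpha_0}-\sqrt{\alpha})/\sqrt{\alpha_0}$ and $(\alpha')^{1/6}=\alpha^{1/6}/\alpha_0^{1/6}$. The three identities to verify are
\begin{equation*}
\alpha_0^{2/3}\,c_2(\alpha')=\tilde c_2,\qquad \alpha_0^{1/3}\,c_1(\alpha')=\tilde c_1,\qquad \mu'(\tau,\alpha_0 T)=\tilde\mu(\tau,T),
\end{equation*}
with $\tilde c_1,\tilde c_2$ as in \eqref{eqConstantsTilde} and $\tilde\mu$ as defined just after it. The first identity makes the time argument $T'-c_2(\alpha')\tau(T')^{2/3}$ equal to $\alpha_0 T-\tilde c_2\tau T^{2/3}$, so the time parameter $\tau$ is preserved (not rescaled); the second turns $-c_1(\alpha')(T')^{1/3}$ into $-\tilde c_1 T^{1/3}$; the third matches the centerings. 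Hence the quantity inside the limit above is exactly $\widetilde X_T(\tau)$ as in \eqref{eqRescStep}, and since $T\to\infty$ if and only if $T'\to\infty$, the claimed convergence \eqref{eq1.62} follows, inherited on the same class of compact intervals.

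The case $\alpha_0=1$ is literally Proposition~\ref{propTightness}, requiring no rescaling at all. There is no genuine obstacle here: the only thing to be careful with is bookkeeping the powers of $\alpha_0$ in the three identities above, and recalling that Proposition~\ref{propTightness} already bundles together the convergence of finite-dimensional distributions and tightness, so the rescaled statement inherits full weak convergence. The relation $\tilde c_2=2\tilde c_1^2\sqrt{\alpha_0}/(\sqrt{\alpha_0}-\sqrt{\alpha})$ recorded after \eqref{eqConstantsTilde} provides a convenient consistency check on the algebra.
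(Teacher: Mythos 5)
Your proof is correct and takes essentially the same approach the paper intends: the paper merely asserts that Corollary~\ref{CorWeakConvergence} follows from ``a simple rescaling of the result of Proposition~\ref{propTightness},'' and your substitution $T'=\alpha_0 T$, $\alpha'=\alpha/\alpha_0$ is exactly that rescaling. The three identities $\alpha_0^{1/3}c_1(\alpha')=\tilde c_1$, $\alpha_0^{2/3}c_2(\alpha')=\tilde c_2$, and $\mu'(\tau,\alpha_0 T)=\tilde\mu(\tau,T)$ all check out, and since $\alpha'T'=\alpha T$ the tagged particle and the time argument coincide literally, so the rescaled process is $\widetilde X_T$ itself.
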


We study a case where the wall has a non-trivial influence. More precisely, we will consider the situation where in the expression of the r.h.s.~of \eqref{eqStartingPoint} the influence is restricted to a $T^{2/3}$ neighborhood of $t=\alpha_0 T$. As we will explain below, this is the case when the function $f$ describing the evolution of the wall satisfies the assumption below.

For any given $\xi\in [-\alpha,1-2\sqrt{\alpha}]$, consider the threshold function
\begin{equation}\label{eqF0}
f_0(\beta)=\left\{
\begin{array}{ll}
\xi-\sqrt{1-\beta}(\sqrt{1-\beta}-2\sqrt{\alpha}),&\beta\in [0,1-\alpha),\\
\xi+\alpha,&\beta\in[1-\alpha,1].
\end{array}
\right.
\end{equation}
see Figure~\ref{FigThresholdFunction}. The parameter $\xi$ will give us the macroscopic position of $x^f_{\alpha T}(T)$.
\begin{figure}[t!]
\begin{center}
\psfrag{t/T}[cb]{$t/T$}
\psfrag{f0}[lc]{$f_0(t/T)$}
\psfrag{a}[cc]{$1-\alpha$}
\psfrag{a0}[cc]{$1-\alpha_0$}
\psfrag{1}[cc]{$1$}
\includegraphics[height=6cm]{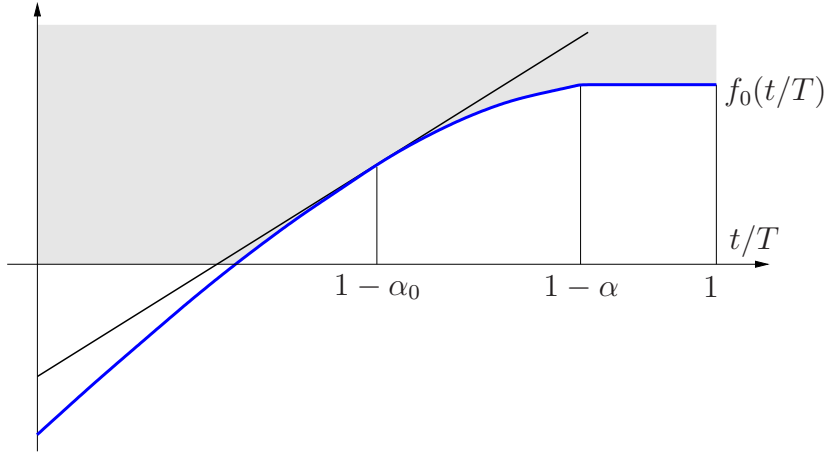}
\caption{The lower bound function $f_0$ (here for $\alpha=0.4$ und $\xi=-0.05$). In Assumption~\ref{AssumptionOnePoint} the function $f(t)/T$ has to be in the gray shaded region since $f(t)\geq 0$ as well. Depicted in black is the tangent line at $1-\alpha_0$ (here $\alpha_0=0.5$), namely $\xi-\sqrt{\alpha_0}(\sqrt{\alpha_0}-2\sqrt{\alpha})+(1-\sqrt{\alpha/\alpha_0})(t/T-1+\alpha_0)$.}
\label{FigThresholdFunction}
\end{center}
\end{figure}

\begin{assumption}\label{AssumptionOnePoint}
Assume that a non-decreasing function $f$ on $\R_{\geq 0}$ with $f(0)=0$ satisfies:
\begin{itemize}
\item[(a)] For some fixed $\e>0$, for all $|t-\alpha_0 T|> \e T$,
\begin{equation}\label{eq1.70}
f(t)\geq T f_0(t/T)+K(\e)T
\end{equation}
for some positive constant $K(\e)>0$.
This means that macroscopically away from $\alpha_0 T$, the function has to be macroscopically larger than $T f_0$.
\item[(b)] For $|t-\alpha_0 T|\leq \e T$, let us parameterize $T-t$ as $T-t=(1-\alpha_0)T+\tilde c_2\tau T^{2/3}$. Then write
\begin{equation}\label{eq1.69}
f(T-t) = \xi T - \tilde\mu(\tau,T)-\tilde c_1(\tau^2-g_T(\tau)) T^{1/3},\quad \tau\in\R.
\end{equation}
Assume that $\tau\mapsto g_T(\tau)$ converges uniformly on compact sets to a piecewise continuous\footnote{With piecewise continuous function we mean that for any bounded interval $I$ can be decomposed into finitely many subsets $I=[a_1,a_2)\cup [a_2,a_3)\cup \ldots \cup [a_{n-1},a_n]$ on which $g$ is a continuous function having one-sided limits at the end-points of the intervals of the decomposition of $I$.} function
$g$. We also assume that there exists a constant $M$ such that, for all $T$ large enough,
\begin{equation}\label{eq1.72}
g_T(\tau)\geq -M+\tau^2/2
\end{equation}
for $|\tau|\leq \e \tilde c_2^{-1} T^{1/3}$.
\end{itemize}
\end{assumption}
\begin{rem}
The condition $f(0)=0$ is easily satisfied in (a) at $t=0$ by choosing $K(\e)\leq -f_0(0)$ as $f_0(0)<0$.
\end{rem}

\begin{thm}\label{ThmOnePointSituation}
Let $f$ satisfy Assumption~\ref{AssumptionOnePoint} and recall that $x^f(t)$ is the TASEP with step initial condition and with a blocking wall moving according to $t\mapsto f(t)$. Then
\begin{equation}\label{eqStartingPointB}
\lim_{T\to\infty}\Pb(x^f_{\alpha T}(T)\geq \xi T-S \tilde c_1 T^{1/3})=\Pb\Big(\sup_{\tau\in \R}\{{\cal A}_2(\tau) -g(\tau)\}\leq S\Big),
\end{equation}
with $\tilde c_1=\frac{(\sqrt{\alpha_0}-\sqrt{\alpha})^{2/3}\alpha_0^{1/6}}{\alpha^{1/6}}$.
\end{thm}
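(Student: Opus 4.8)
The plan is to combine the exact identity of Proposition~\ref{PropStartingIdentity} with the process-level convergence from Corollary~\ref{CorWeakConvergence} and the increment bounds of Theorem~\ref{thmComparison}. By Proposition~\ref{PropStartingIdentity} with $f$ as in the statement,
\begin{equation}
\Pb(x^f_{\alpha T}(T)\geq \xi T - S\tilde c_1 T^{1/3}) = \Pb\big(x_{\alpha T}(t) > \xi T - S\tilde c_1 T^{1/3} - f(T-t)\text{ for all }t\in[0,T]\big),
\end{equation}
so the task reduces to an asymptotic analysis of the probability that the trajectory of $x_{\alpha T}(\cdot)$ in the \emph{wall-free} TASEP stays above a moving lower barrier determined by $f$. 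First I would change variables, writing $T-t=(1-\alpha_0)T+\tilde c_2\tau T^{2/3}$ in the critical window and recalling that, with the rescaled process $\widetilde X_T$ of \eqref{eqRescStep}, the barrier condition inside the window $|\tau|\le \e\tilde c_2^{-1}T^{1/3}$ becomes exactly $\widetilde X_T(\tau)\le g_T(\tau)+S$ by \eqref{eq1.69}. So the event is $\{\widetilde X_T(\tau)\le g_T(\tau)+S$ for all $\tau$ in the window$\}$ together with a contribution from $|t-\alpha_0 T|>\e T$.

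The argument then splits into three pieces. \emph{(i) The macroscopic region $|t-\alpha_0 T|>\e T$ is negligible:} here Assumption~\ref{AssumptionOnePoint}(a) gives $f(T-t)\ge Tf_0((T-t)/T)+K(\e)T$, and since $\xi T - f(T-t) \le Tf_0$ translates (by the definition \eqref{eqF0} of $f_0$ and the law of large numbers \eqref{eqLLN} for $x_{\alpha T}(\cdot)$) into a barrier that the typical trajectory clears by a macroscopic margin $\gtrsim K(\e)T$, one shows via the one-point lower-tail bound \eqref{eq1.26} (applied along an $O(T^{2/3})$-mesh in $t$, using monotonicity of $x_{\alpha T}(t)$ between mesh points) that the probability of violating the barrier somewhere in this region is $o(1)$. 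Hence up to $o(1)$ one may restrict to the window $|\tau|\le \e\tilde c_2^{-1}T^{1/3}=:L_T$. \emph{(ii) Reduce to a compact window:} on $\{L\le|\tau|\le L_T\}$ one uses the lower bound \eqref{eq1.72}, $g_T(\tau)\ge -M+\tau^2/2$, so the barrier is $\widetilde X_T(\tau)\le S - M + \tau^2/2$; Theorem~\ref{thmComparison} bounds the increments of $\widetilde X_T$ from below by those of the stationary (Poisson) process $x^{\rho_+}$, whose rescaled version converges to a Brownian motion with drift, and a standard Gaussian tail estimate (parabolic barrier beats Brownian fluctuations) shows that the contribution of $|\tau|\in[L,L_T]$ can be made uniformly small in probability by taking $L$ large — this is exactly the mechanism already used in the proof of Proposition~\ref{propTightness}, and it is also why the class of admissible $g$ is restricted to $g(\tau)\ge\const+\tau^2/2$. \emph{(iii) The compact window:} on a fixed compact $[-L,L]$, $\widetilde X_T\Rightarrow {\cal A}_2(\tau)-\tau^2$ weakly in $C([-L,L])$ by Corollary~\ref{CorWeakConvergence}, and $g_T\to g$ uniformly; since $g$ is only piecewise continuous one must check that the map $h\mapsto \sup_{[-L,L]}(h(\tau)+\tau^2-g(\tau)) = \sup_{[-L,L]}({\cal A}_2(\tau)-g(\tau))$ is a.s.\ continuous with respect to the limit law (the Airy$_2$ process has continuous paths and its supremum against a fixed piecewise-continuous $g$ has a continuous distribution function, because ${\cal A}_2$ is non-degenerate), so that
\begin{equation}
\Pb\big(\widetilde X_T(\tau)\le g_T(\tau)+S\ \forall\,\tau\in[-L,L]\big)\ \longrightarrow\ \Pb\Big(\sup_{\tau\in[-L,L]}\{{\cal A}_2(\tau)-g(\tau)\}\le S\Big).
\end{equation}
Finally, letting $L\to\infty$ and using that $g(\tau)\ge\const+\tau^2/2$ makes $\sup_{|\tau|>L}\{{\cal A}_2(\tau)-g(\tau)\}\to-\infty$ a.s., the right-hand side converges to $\Pb(\sup_{\tau\in\R}\{{\cal A}_2(\tau)-g(\tau)\}\le S)$, which is \eqref{eqStartingPointB}.

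\textbf{Main obstacle.} The routine part is step (iii) on a compact set; the delicate points are the two truncation estimates. Step (ii) is the real heart: one needs the parabolic lower bound on $g_T$ to dominate the Brownian-scale fluctuations of $\widetilde X_T$ \emph{uniformly over the growing window} $[L,L_T]$, and the only tool for controlling those fluctuations over such a long range is the comparison with the stationary Poisson process from Theorem~\ref{thmComparison} — so one must be careful that the event ${\cal E}\cap\hat{\cal E}$ of Proposition~\ref{propComparison} holds with high probability on the whole relevant time span, which forces the parameters ($\kappa$, and the density shift $\sim\kappa t^{-1/3}$) to be chosen compatibly with the window size. Step (i) is conceptually easy but requires matching the \emph{two} pieces of $f_0$ in \eqref{eqF0} with the two regimes of the law of large numbers \eqref{eqLLN} and doing a mesh/monotonicity argument so that a union bound over $O(T^{1/3})$ mesh points still leaves an $o(1)$ total, which works because the per-point bound \eqref{eq1.26} is stretched-exponentially small in the macroscopic gap. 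A minor additional point is justifying the continuity of the distribution of $\sup_{[-L,L]}({\cal A}_2-g)$ for piecewise-continuous $g$, which one can get from absolute continuity / non-degeneracy properties of the Airy$_2$ process.
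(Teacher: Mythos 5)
Your proposal is correct and follows essentially the same road as the paper: Proposition~\ref{PropStartingIdentity} to convert to a trajectory-above-barrier event, Corollary~\ref{CorWeakConvergence} plus continuous mapping on a fixed compact window (the paper handles the piecewise continuity of $g$ by decomposing into continuity intervals, which is equivalent to your observation that $\sup(\cdot - g)$ is Lipschitz in $\|\cdot\|_\infty$), then truncation via the QR16 result to pass to $\sup_{\R}$, and Lemmas~\ref{lemmaVerySmallTimes}--\ref{lemModerate} for the complementary regions. Your diagnosis of the ``main obstacle'' is accurate: in Lemma~\ref{lemModerate} the paper indeed splits $[\ell_0\varkappa,\e\tilde c_2^{-1}T^{1/3}]$ into $\varkappa$-blocks and, for each block $\ell$, applies Theorem~\ref{thmComparison} anew with a block-dependent $\kappa\sim\varkappa(\ell+1)^2$, so that the density shift and the parabolic lower bound on $g_T$ together beat the Poisson/Brownian fluctuations — exactly the compatibility you flag. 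The only minor difference is that for the macroscopic regime the paper uses a unit mesh (so particle displacement and $f$-change between mesh points are both $O(1)$, making the union bound clean) rather than the $O(T^{2/3})$ mesh you suggest, but either works given the macroscopic $K(\e)T$ margin from Assumption~\ref{AssumptionOnePoint}(a).
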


\begin{remark}
We assumed that $g_T$ converges uniformly on compact sets to a piecewise continuous function $g$. It is not hard to allow the limiting function $g$ to be equal to infinity (the wall is removed) in some intervals since the process $\widetilde X_T$ is tight. In this case, the supremum in \eqref{eqStartingPointB} will be restricted to a subset of $\R$.
\end{remark}

Before proving Theorem~\ref{ThmOnePointSituation}, let us comment on Assumption~\ref{AssumptionOnePoint}(a). It is based on Proposition~\ref{PropStartingIdentity} and the law of large numbers \eqref{eqLLN}.  In order to have a non-trivial interaction of particle $\alpha T$ with the wall at time $\alpha_0 T$ in the r.h.s.~of \eqref{eqStartingPoint}, $f$ macroscopically needs to satisfy
\begin{equation}\label{eq3.11}
\sqrt{\alpha_0}(\sqrt\alpha_0-2\sqrt\alpha) T \simeq \xi T-f((1-\alpha_0)T).
\end{equation}
On the other hand, the influence of the wall in the r.h.s.~of \eqref{eqStartingPoint} at time $\beta T$ will be asymptotically vanishing if, for $\beta\leq\alpha$,
\begin{equation}\label{eq3.2}
-\alpha T> \xi T-f((1- \beta)T),
\end{equation}
and for $\beta\in (\alpha,1]\setminus\{\alpha_0\}$,
\begin{equation}\label{eq3.3}
\sqrt{\beta}(\sqrt{\beta}-2\sqrt\alpha) T > \xi T-f((1-\beta)T).
\end{equation}
\eqref{eq3.2} and \eqref{eq3.3} give the macroscopic lower threshold $f_0$ for $f$.

In the proof of Theorem~\ref{ThmOnePointSituation} we will have to control the increments of the rescaled process $\widetilde X_T(\tau)$. This will be done by the comparison result (Theorem~\ref{thmComparison}). Thus, as in \eqref{eqRescStep}, we will define the rescaled process for the stationary TASEP with density $\rho$ by
\begin{equation}\label{eqRescStat}
\widetilde B^{\rho}_T(\tau):=\frac{x^\rho_{N}(\alpha_0 T- \tilde c_2 \tau T^{2/3})-\tilde \mu(\tau,T)}{-\tilde c_1 T^{1/3}}.
\end{equation}
Note that since we are going to use \eqref{eqRescStat} only to bound increments  of the form $\widetilde B^\rho_T(\tau_2)-\widetilde B^\rho_T(\tau_1)$, these do not depends on the index $N$.

\subsection{Single region at the boundary time}\label{SectBoundaryCase}
This time consider $\alpha_0=1$ and $\xi=1-2\sqrt{\alpha}$. Consider the threshold function
\begin{equation}\label{eqF0bis}
f_0(\beta)=\left\{
\begin{array}{ll}
1-2\sqrt{\alpha}-\sqrt{1-\beta}(\sqrt{1-\beta}-2\sqrt{\alpha}),&\beta\in [0,1-\alpha),\\
(1-\sqrt{\alpha})^2,&\beta\in[1-\alpha,1].
\end{array}
\right.
\end{equation}

\begin{assumption}\label{AssumptionOnePointBis}
Assume that a non-decreasing function $f$ on $\R_{\geq 0}$ with $f(0)=0$ satisfies:
\begin{itemize}
\item[(a)] For some fixed $\e>0$, for all $t>\e T$,
\begin{equation}\label{eq1.70Bis}
f(t)\geq T f_0(t/T)+K(\e)T
\end{equation}
for some positive constant $K(\e)>0$.
\item[(b)] For $t\leq \e T$, let us parameterize $T-t$ as $T-t=\tilde c_2\tau T^{2/3}$. Then write
\begin{equation}\label{eq1.69Bis}
f(T-t) = (1-2\sqrt{\alpha})T - \tilde\mu(\tau,T)-\tilde c_1(\tau^2-g_T(\tau)) T^{1/3},\quad \tau\in\R_+.
\end{equation}
Assume that $\tau\mapsto g_T(\tau)$ converges uniformly on compact sets to a piecewise continuous function $g$. We also assume that there exists a constant $M$ such that, for all $T$ large enough,
\begin{equation}\label{eq1.72Bis}
g_T(\tau)\geq -M+\tau^2/2
\end{equation}
for $|\tau|\leq \e \tilde c_2^{-1} T^{1/3}$.
\end{itemize}
\end{assumption}

\begin{thm}\label{ThmOnePointSituationBis}
Let $f$ satisfy Assumption~\ref{AssumptionOnePointBis} and recall that $x^f(t)$
is the TASEP with the step initial condition and with blocking wall moving
according to $t\mapsto f(t)$. Then
\begin{equation}\label{eqStartingPointBBis}
\lim_{T\to\infty}\Pb(x^f_{\alpha T}(T)\geq \xi T-S \tilde c_1
T^{1/3})=\Pb\Big(\sup_{\tau\in \R_+}\{{\cal A}_2(\tau) -g(\tau)\}\leq S\Big),
\end{equation}
with $\tilde c_1=\frac{(\sqrt{\alpha_0}-\sqrt{\alpha})^{2/3}\alpha_0^{1/6}}{\alpha^{1/6}}$.
\end{thm}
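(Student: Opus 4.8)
The plan is to run the same scheme as for Theorem~\ref{ThmOnePointSituation}, the only structural change being that in the boundary window $\alpha_0=1$ the rescaled time $\tau$ ranges over $\R_+$ rather than $\R$: since $t\le T$ we have $T-t\ge 0$, so only $\tau\ge 0$ occurs. First apply Proposition~\ref{PropStartingIdentity} with $n=\alpha T$ and $s=\xi T-S\tilde c_1 T^{1/3}$ to rewrite the left-hand side of \eqref{eqStartingPointBBis} as $\Pb\big(x_{\alpha T}(t)>s-f(T-t)\text{ for all }t\in[0,T]\big)$. In the critical window near $t=T$ we parameterize $t=T-\tilde c_2\tau T^{2/3}$ with $\tau\in[0,\e\tilde c_2^{-1}T^{1/3}]$; combining the definition \eqref{eqRescStep} of $\widetilde X_T$ at $\alpha_0=1$ with the representation \eqref{eq1.69Bis} of $f$ (and $\xi=1-2\sqrt\alpha$), a one-line computation shows that $x_{\alpha T}(t)>s-f(T-t)$ is equivalent to $\widetilde X_T(\tau)+\tau^2-g_T(\tau)<S$. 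So it suffices to prove that the probability that $\widetilde X_T(\tau)+\tau^2-g_T(\tau)<S$ holds for all relevant $t$ converges to $\Pb\big(\sup_{\tau\in\R_+}\{{\cal A}_2(\tau)-g(\tau)\}\le S\big)$.

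Next, the contribution of $t$ with $T-t>\e T$ is negligible. Indeed, by \eqref{eqF0bis} the quantity $\xi T-Tf_0((T-t)/T)$ equals the law of large numbers location of $x_{\alpha T}(t)$ given by \eqref{eqLLN}, so Assumption~\ref{AssumptionOnePointBis}(a) forces $s-f(T-t)$ to lie at least $K(\e)T+o(T)$ to the left of the typical position of $x_{\alpha T}(t)$. A union bound over a fixed mesh of checkpoints in $t$ — interpolating between consecutive checkpoints using that both $t\mapsto x_{\alpha T}(t)$ and $t\mapsto s-f(T-t)$ are nondecreasing — together with the one-point (large-deviation) tail estimates underlying Lemma~\ref{lemLocalStep}, shows that with probability $1-o(1)$ the constraint holds on all of $\{t:\,T-t>\e T\}$. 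Hence the left-hand side of \eqref{eqStartingPointBBis} equals $\Pb\big(\widetilde X_T(\tau)+\tau^2-g_T(\tau)<S\text{ for all }\tau\in[0,\e\tilde c_2^{-1}T^{1/3}]\big)+o(1)$.

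It remains to study the supremum of $\widetilde X_T(\tau)+\tau^2-g_T(\tau)$ over $\tau\in[0,\e\tilde c_2^{-1}T^{1/3}]$, which I would split at a large fixed $L$. On $[0,L]$: Corollary~\ref{CorWeakConvergence}, which holds for $\alpha_0\in(\alpha,1]$ and in particular for $\alpha_0=1$, gives $\widetilde X_T\Rightarrow{\cal A}_2(\cdot)-(\cdot)^2$ weakly in $C([0,L])$; since $g_T\to g$ uniformly on $[0,L]$ and $g$ is bounded there, the continuous-mapping theorem yields $\Pb\big(\sup_{[0,L]}\{\widetilde X_T+\tau^2-g_T\}<S\big)\to\Pb\big(\sup_{[0,L]}\{{\cal A}_2-g\}\le S\big)$, where continuity in $S$ of the limit (or a squeeze over $S\pm\delta$) reconciles $<$ with $\le$. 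On $[L,\e\tilde c_2^{-1}T^{1/3}]$: the bound $g_T(\tau)\ge\tau^2/2-M$ from \eqref{eq1.72Bis} reduces the constraint to $\widetilde X_T(\tau)<S-M-\tau^2/2$; since the rarefaction-fan profile $m(\beta)=\beta-2\sqrt{\alpha\beta}$ has negative third derivative, the macroscopic trajectory $t\mapsto m(t/T)T$ stays above its osculating parabola at $t=T$, which is exactly $\tilde\mu(\tau,T)+\tilde c_1\tau^2T^{1/3}$, so throughout this window the typical value of $\widetilde X_T(\tau)$ is $\le-\tau^2$. Thus the bad event is a one-point lower-tail event whose probability, summed over a mesh (again interpolated by monotonicity in $\tau$), gives a contribution that tends to $0$ as $L\to\infty$ uniformly in $T$. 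Letting $T\to\infty$ and then $L\to\infty$ — using ${\cal A}_2(\tau)-g(\tau)\to-\infty$ a.s., hence $\sup_{[0,L]}\{{\cal A}_2-g\}\uparrow\sup_{\R_+}\{{\cal A}_2-g\}$, and the continuity in $S$ of the limit, which also makes the passage from $x^f_{\alpha T}(T)>s$ to $x^f_{\alpha T}(T)\ge s$ harmless — yields \eqref{eqStartingPointBBis}. The main obstacle, as in the non-boundary case, is controlling the process on the regions where $\tau$ grows with $T$ (both $T-t>\e T$ and $\tau\in[L,\e\tilde c_2^{-1}T^{1/3}]$): one needs a deterministic, sufficiently strong (here quadratic-in-$\tau$) lower bound for the gap between the tagged particle and the barrier that is uniform over these growing windows, so that the one-point large-deviation estimates become summable, and this is precisely where the sign of the third derivative of the rarefaction profile and the hypothesis $g_T(\tau)\ge\tau^2/2-M$ are used.
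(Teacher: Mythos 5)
Your overall architecture is the one the paper intends: apply Proposition~\ref{PropStartingIdentity} to convert the wall constraint into a barrier condition for the free step-initial TASEP, parameterize the critical window near $t=T$ by $\tau\ge 0$, invoke Corollary~\ref{CorWeakConvergence} on a bounded window $[0,L]$, kill the macroscopic region $T-t>\e T$ by Assumption~\ref{AssumptionOnePointBis}(a), control the moderate range $\tau\in[L,\e\tilde c_2^{-1}T^{1/3}]$, and then send $T\to\infty$, $L\to\infty$. This matches the paper's own indication that the proof is the one-sided version of the argument for Theorem~\ref{ThmOnePointSituation}. The macroscopic region is handled correctly: there the gap $x_{\alpha T}(t)-(s-f(T-t))$ is of order $K(\e)T$, so a monotonicity interpolation on a mesh of macroscopically small spacing, combined with the one-point tail bounds, works.

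The genuine gap is in the moderate-deviation regime. There you write that the bad event reduces, by ``interpolation by monotonicity in $\tau$'', to a one-point lower-tail event whose probabilities sum to something small. This is where the argument breaks. Both $t\mapsto x_{\alpha T}(t)$ and $t\mapsto s-f(T-t)$ are indeed nondecreasing, so on an interval $[t_{j+1},t_j]$ the constraint follows from $x_{\alpha T}(t_{j+1})>s-f(T-t_j)$; but in the $\widetilde X_T$ scaling this one-point inequality picks up the drift $\tilde\mu(\tau_{j+1},T)-\tilde\mu(\tau_j,T)=-2\tilde c_1^2(\tau_{j+1}-\tau_j)T^{2/3}$, i.e.\ a loss of $2\tilde c_1(\tau_{j+1}-\tau_j)T^{1/3}$ at the $T^{1/3}$ fluctuation scale. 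With a mesh of $O(1)$ spacing in $\tau$, the interpolated condition becomes $\widetilde X_T(\tau_{j+1})\lesssim -cT^{1/3}$, which is typically \emph{false} for $\tau_{j+1}$ of order $L$ (since $\widetilde X_T(\tau)\approx-\tau^2$, and we only have $\tau^2\gg T^{1/3}$ when $\tau\gg T^{1/6}$). The interpolation therefore gives no lower bound on the probability in the range $L\le\tau\lesssim T^{1/6}$. Refining the mesh to kill the drift loss (spacing $o(T^{-1/3})$ in $\tau$) produces of order $T^{2/3}$ mesh points, and then $\sum_j Ce^{-c\tau_j^2}\approx T^{1/3}e^{-cL^2}$ is not $o(1)$ uniformly in $T$. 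The missing ingredient is exactly the content of Lemma~\ref{lemModerate}: one must center by the particle's mesoscopic drift and control the resulting \emph{fluctuations} of $\widetilde X_T$ on $\tau$-windows of width $\varkappa$, and this is done by the comparison with stationary TASEPs (Theorem~\ref{thmComparison}, localized by Propositions~\ref{propComparison1}--\ref{propComparison}), which sandwiches the increments between two Poisson processes and then applies a Doob-type maximal inequality. Monotonicity alone cannot see the drift cancellation and is not a substitute for this step.

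A secondary, smaller point: you invoke ``one-point large-deviation tails'' to interpolate between checkpoints in the macroscopic region, while the paper's Lemma~\ref{lemmaMacroPoints} uses an $O(1)$ mesh in $t$ and bounds the intra-mesh increment of $x_{\alpha T}$ by a Poisson random variable. Your monotonicity argument is fine there (the gap is macroscopic so the loss is negligible), but it should be stated as monotonicity of both $x_{\alpha T}(\cdot)$ and $s-f(T-\cdot)$ on each intra-mesh interval, not as a large-deviation interpolation; the large deviations are needed only at the mesh points themselves.
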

The proof of Theorem~\ref{ThmOnePointSituationBis} is a slight simplification of the one of Theorem~\ref{ThmOnePointSituation}. The simplification is that, for the in the r.h.s.~of \eqref{eqStartingPoint}, instead of controlling the bound away from the macroscopic time $\alpha_0 T$ on both sides, here we have only one side.

\subsection{Special case: constant speed}\label{secConstantSpeed}
In this section we deal with a special case of the wall moving with a constant speed $v$. Let $\mathrm{F}_2$ and $\mathrm{F}_1$ be the GUE- and GOE-Tracy-Widom distributions, and let $\mathrm{F}_{2 \to 1;0}$ be the crossover distribution given by the time distribution of the $\mathcal{A}_{2 \to 1}$ at time $0$ introduced in~\cite{BFS07}.
For such a choice of the behaviour of the wall, the limiting distributions of particles are the following.
\begin{prop}\label{PropLinearCase}
Let $f(0)=0$ and $f(t)=cT + v t$ for  $t>0$, with $c \ge 0$, $v\in (0,1)$. \\
(a) For $c<1-v$ and $\alpha<(1-v-\sqrt{c(1-v)})^2$, we have
\begin{equation}
\lim_{T \to \infty} \Pb\left(x_{\alpha T}^f (T)\geq
\Big(v+c-\frac{\alpha}{1-v}\Big) T-S \tilde c_1 T^{1/3} \right) =
\mathrm{F}_1(2^{2/3} S),
\end{equation}
with $\tilde c_1=\alpha^{1/3}v^{2/3}/(1-v)$.\\
(b) For $c=0$ and $\alpha=(1-v)^2$ we have
\begin{equation}
\lim_{T \to \infty}  \Pb\left(x_{\alpha T}^f (T) \geq (1-2 \sqrt{\alpha}) T -S \tilde c_1 T^{1/3}\right)= \mathrm{F}_{2 \to 1;0}(S),
\end{equation}
with $\tilde c_1=v^{2/3}/(1-v)^{1/3}$.\\
(c) For  $c\geq 0$ and $\alpha>(1-v-\sqrt{c(1-v)})^2$ we have
\begin{equation}
\lim_{T \to \infty} \Pb\left(x_{\alpha T}^f (T) \geq (1-2 \sqrt{\alpha}) T -S \tilde c_1 T^{1/3}\right) = \mathrm{F}_2(S),
\end{equation}
with $\tilde c_1=v^{2/3}/(1-v)^{1/3}$.
\end{prop}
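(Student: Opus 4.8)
The plan is to read off the limiting barrier function $g$ that the linear wall $f(t)=cT+vt$ produces in Theorems~\ref{ThmOnePointSituation}--\ref{ThmOnePointSituationBis}, and then to recognize the resulting variational formula; parts (a) and (b) are genuine applications of those theorems, whereas in part (c) the wall turns out to be irrelevant at the $T^{1/3}$ scale. Throughout write $\beta=t/T$ and $\gamma=1-\beta$.

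For part (a) I would take $\alpha_0=\alpha/(1-v)^2$ and $\xi=v+c-\alpha/(1-v)$; under $\alpha<(1-v-\sqrt{c(1-v)})^2\le(1-v)^2$ this gives $\alpha_0\in(\alpha,1)$ and realizes the macroscopic tangency relation \eqref{eq3.11}. The first task is to verify Assumption~\ref{AssumptionOnePoint}. Completing the square, its part (a) becomes the identity
\begin{equation}
\frac{f(t)}{T}-f_0\big(t/T\big)=\begin{cases}\big(\sqrt{(1-v)\gamma}-\sqrt{\alpha/(1-v)}\,\big)^{2}, & \gamma>\alpha,\\ v\big(\alpha/(1-v)-\gamma\big), & \gamma\le\alpha,\end{cases}
\end{equation}
whose two branches are nonnegative and vanish only at the single value $\gamma=\alpha_0$, together with the single extra inequality $f(0)/T-f_0(0)=((1-v)-\sqrt\alpha)^2/(1-v)-c>0$ at $t=0$, which is precisely the hypothesis $\alpha<(1-v-\sqrt{c(1-v)})^2$ (the condition $c<1-v$ is what keeps this range of $\alpha$ nonempty). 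Part (b) of the assumption is then automatic: parameterizing $T-t=(1-\alpha_0)T+\tilde c_2\tau T^{2/3}$, both $f(T-t)$ and $\tilde\mu(\tau,T)$ are affine in $\tau$, the choice of $\xi$ makes the constant coefficient of $f(T-t)-(\xi T-\tilde\mu(\tau,T))$ vanish, and the choice $\alpha_0=\alpha/(1-v)^2$ makes its $\tau T^{2/3}$ coefficient vanish, so $g_T(\tau)\equiv\tau^2$ and \eqref{eq1.72} is trivial. Theorem~\ref{ThmOnePointSituation} then yields the limit $\Pb\big(\sup_{\tau\in\R}\{{\cal A}_2(\tau)-\tau^2\}\le S\big)$, which equals $\mathrm F_1(2^{2/3}S)$ by Johansson's variational identity for $\sup_\tau({\cal A}_2(\tau)-\tau^2)$. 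Part (b) is the identical computation with $\alpha_0=1$, $\alpha=(1-v)^2$, $\xi=1-2\sqrt\alpha$, $c=0$, now invoking Theorem~\ref{ThmOnePointSituationBis} and Assumption~\ref{AssumptionOnePointBis}: one again finds $g_T(\tau)\equiv\tau^2$, this time on $\R_+$, and $\Pb\big(\sup_{\tau\in\R_+}\{{\cal A}_2(\tau)-\tau^2\}\le S\big)$ is by definition $\mathrm F_{2\to1;0}(S)$, the variational description of the Airy$_{2\to1}$ process at the origin.

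For part (c), when $\alpha>(1-v-\sqrt{c(1-v)})^2$ the tagged particle sits macroscopically strictly to the left of the density-$(1-v)$ plateau that the wall builds up behind itself, i.e.\ inside the unperturbed rarefaction fan, so I expect no effect at scale $T^{1/3}$. To make this rigorous I would use Proposition~\ref{PropStartingIdentity}: with $s=(1-2\sqrt\alpha)T-S\tilde c_1 T^{1/3}$ the event $\{x^f_{\alpha T}(T)>s\}$ equals $\{x_{\alpha T}(T)>s\}\cap\{x_{\alpha T}(t)>s-f(T-t)\ \forall\,t<T\}$. The same square completion as above now gives $f(t)/T-f_0(t/T)>0$ for every $t<T$, with a uniform gap on each set $\{t\le(1-\delta)T\}$; hence, combining the law of large numbers \eqref{eqLLN} and the exponential one-point deviation bounds (cf.\ Lemma~\ref{lemLocalStep}) with the tightness of the rescaled trajectory near $t=T$ (Proposition~\ref{propTightness}, Corollary~\ref{CorWeakConvergence}), the second event occurs with probability $1-o(1)$ given the first. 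Therefore $\lim_T\Pb(x^f_{\alpha T}(T)\ge s)=\lim_T\Pb(x_{\alpha T}(T)\ge s)=\mathrm F_2(S)$, the usual GUE one-point limit for the step initial condition.

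The step I expect to be the main obstacle is the verification of Assumption~\ref{AssumptionOnePoint}(a) (and its analogue in part (c)): showing that the straight line $t\mapsto cT+vt$ dominates the concave profile $t\mapsto Tf_0(t/T)$ with a uniform gap off the critical window is exactly where the threshold $(1-v-\sqrt{c(1-v)})^2$ and the condition $c<1-v$ get pinned down, and it rests on the completion of the square $(1-v)\gamma-2\sqrt{\alpha\gamma}+\alpha/(1-v)=(\sqrt{(1-v)\gamma}-\sqrt{\alpha/(1-v)})^2$ together with careful bookkeeping of the constant jump at $t=0$ coming from $f(0)=0$. Everything else — that $g_T\equiv\tau^2$, and the identification of the three variational formulas with $\mathrm F_1$, $\mathrm F_{2\to1;0}$ and $\mathrm F_2$ — is routine once the assumptions are checked and the quoted theorems are invoked.
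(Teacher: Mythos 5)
Your proposal is correct and follows essentially the same route as the paper's own proof: parts (a) and (b) are read off from Theorems~\ref{ThmOnePointSituation} and~\ref{ThmOnePointSituationBis} after choosing $\alpha_0=\alpha/(1-v)^2$ and $\xi=v+c-\alpha/(1-v)$ (so that $g_T\equiv\tau^2$), and part (c) is handled outside those theorems by combining Proposition~\ref{PropStartingIdentity} with the law of large numbers and a uniform-in-time estimate as in Lemma~\ref{lemmaMacroPoints}. The only real difference is that you carry out explicitly the completion-of-the-square check of Assumption~\ref{AssumptionOnePoint}(a), which the paper leaves implicit; this is a useful sanity check but does not change the argument.
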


\begin{remark}
The case $\alpha=(1-v-\sqrt{c(1-v)})^2$ for $c>0$ corresponds to a shock-type situation, which we do not address in this paper. The appearance of the $\mathrm{F}_{2 \to 1;0}$ distribution in case (b) of Proposition~\ref{PropLinearCase} can be understood by comparing the characteristic lines, see Figure~\ref{FigCharacteristics}.
\end{remark}

\begin{figure}[t!]
\begin{center}
\psfrag{x}[lc]{$x$}
\psfrag{t}[lb]{$t$}
\psfrag{a}[c]{(a)}
\psfrag{b}[c]{(b)}
\includegraphics[height=4cm]{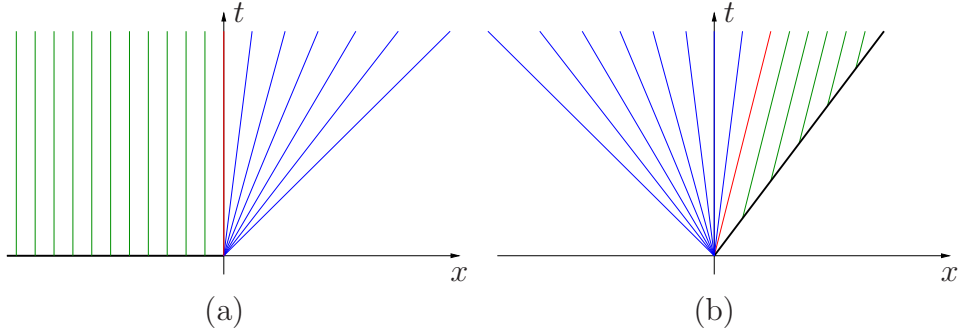}
\caption{Characteristic lines for: (a) half-flat initial condition: $x_k(0)=-2k$, $k\geq 0$, and (b) step initial condition with wall starting at the origin with constant speed smaller than $1$, i.e., case (b) of Proposition~\ref{PropLinearCase}. One sees that geometrically are very similar and indeed the fluctuations around the red line are both given in terms of the Airy$_{2\to 1}$ process.}
\label{FigCharacteristics}
\end{center}
\end{figure}

\begin{proof}[Proof of Proposition~\ref{PropLinearCase}]
Linearly expanding $x_{\alpha T}(t)$ and $\xi T-f(T-t)$ around \mbox{$t=\alpha_0 T$}L we get that, c.f.\ \eqref{eq3.11},
\begin{equation}
v=1-\sqrt{\alpha/\alpha_0},\quad \xi=c+v-\alpha/(1-v).
\end{equation}
Since the function $f$ is increasing and $f(t)\geq 0$ for all time, we necessarily have to take $v\geq 0$ and $c\geq 0$. Furthermore, $v>0$ comes from the requirement that $\alpha_0>\alpha$.

Case (a), $\alpha<(1-v-\sqrt{c(1-v)})^2$, corresponds to the case $\alpha_0<1$ together with the requirement that $\xi< 1-2\sqrt{\alpha}$. In this case Assumption~\ref{AssumptionOnePoint} is satisfied with $\xi = v+c-\frac{\alpha}{1-v}$, $\alpha_0=\alpha/(1-v)^2$, and $g_T(\tau) = \tau^2$. Therefore, applying Theorem~\ref{ThmOnePointSituation} we obtain
\begin{equation}
\lim_{T\to\infty}\Pb \left(x^f_{\alpha T}(T)\geq \Big(v+c-\frac{\alpha}{1-v}\Big) T-S \tilde c_1 T^{1/3} \right)=\Pb\Big(\sup_{\tau\in \R}\{{\cal A}_2(\tau) -\tau^2 \}\leq S\Big)
\end{equation}
It is known that the distribution given by the right-hand side is the Tracy-Widom $\mathrm{F}_1$ distribution rescaled by $2^{1/3}$. This result was first obtained in~\cite[Corollary~1.3]{Jo03b}.

Case (b), $\alpha=(1-v)^2$ and $c=0$ corresponds to $\alpha_0=1$ and thus $\xi=1-2\sqrt{\alpha}$. Assumption~\ref{AssumptionOnePointBis} is satisfied with $g_T(\tau) = \tau^2$. Therefore, we can apply Theorem~\ref{ThmOnePointSituationBis} and obtain
\begin{equation}
\lim_{T\to\infty}\Pb\left(x^f_{\alpha T}(T)\geq (1-2 \sqrt{\alpha}) T-S \tilde c_1 T^{1/3}\right)=\Pb\Big(\sup_{\tau\in \R_+}\{{\cal A}_2(\tau) -\tau^2 \}\leq S\Big),
\end{equation}
The limiting distribution in the right-hand side was shown to be equal to $\mathrm{F}_{2 \to 1;0}$ in~\cite{QR13b}.

Case (c), $\alpha>(1-v-\sqrt{c(1-v)})^2$ is not covered by the above theorems. However, this case is much easier than the previous two since it corresponds to the region whose limit behavior is not affected by the moving wall. We have not included a general theorem for this situation, so let us present a brief argument in this special case. Applying \eqref{eqStartingPoint} and the law of large numbers \eqref{eqLLN} we obtain that in this case the critical constraint in estimating
\begin{equation}
\label{eqFirstRepeat}
\Pb(x_{\alpha T} (t)> s-f(T-t)\textrm{ for all }t\in [0,T])
\end{equation}
comes from the case $t=T$ (recall that $f(0)=0$ and the function $f$ is not continuous at $0$ if $c >0$), while for other $t \in [0,T)$ and $s= (1-2\sqrt{\alpha})T + S \tilde c_1 T^{1/3}$ the condition $x_n(t)> s-f(T-t)$ is satisfied for every single moment of time with a macroscopic difference between the two sides with probability exponentially close to 1, and this can be extended to an estimate which is uniform in $t$ in the same way as in Lemma~\ref{lemmaMacroPoints} below. Thus, the limit of probability from \eqref{eqFirstRepeat} is equal to the limit of
\begin{equation}
\Pb(x_{\alpha T} (T)> s),
\end{equation}
which is given by the Tracy-Widom $\mathrm{F}_2$ distribution via a classical result of Johansson~\cite{Jo00b} (which is a single-point version of \eqref{eq1.62} above).
\end{proof}

\begin{remark}
Modifying (b) slightly we can get the distribution of the Airy$_{2\to 1}$ at a different time than $0$. For instance, take $c=0$, $\xi=1-2\sqrt{\alpha}$, $\alpha_0=1$ but now $v=1-\sqrt{\alpha}+\delta (1-\sqrt{\alpha})^{1/3}\alpha^{1/6} T^{-1/3}$ and $g_T(\tau)=\tau^2-2\delta \tau$ for some $\delta\geq 0$. Then we get from Theorem~\ref{ThmOnePointSituationBis}
\begin{equation}
\begin{aligned}
\lim_{T\to\infty}\Pb\left(x^f_{\alpha T}(T)\geq (1-2 \sqrt{\alpha}) T-S \tilde c_1 T^{1/3}\right)&=\Pb\Big(\sup_{\tau\in \R_+}\{{\cal A}_2(\tau) -\tau^2+2\delta\tau \}\leq S\Big)\\
&=\Pb\Big(\sup_{u\leq \delta}\{{\cal A}_2(u) -u^2 \}\leq S-\delta^2\Big),
\end{aligned}
\end{equation}
where we made the change of variable $\tau = \delta-u$ and used the stationarity of the Airy$_2$ process, namely ${\cal A}_2(\delta-u)\stackrel{(d)}{=}{\cal A}_2(u)$.
By Theorem~1 of~\cite{QR13b} we have
\begin{equation}
\begin{aligned}
\Pb\Big(\sup_{u\leq \delta}\{{\cal A}_2(u) -u^2 \}\leq S-\delta^2\Big) &= \Pb({\cal A}_{2\to 1}(\delta)\leq S-\max\{0,\delta\}^2)\\
&=\mathrm{F}_{2 \to 1;\delta}(S-\max\{0,\delta\}^2).
\end{aligned}
\end{equation}
\end{remark}

\subsection{Asymptotics}
Now we are going to prove Theorem~\ref{ThmOnePointSituation}.
\begin{proof}[Proof of Theorem~\ref{ThmOnePointSituation}]
Let $n=\alpha T$ and consider $s=\xi T-S \tilde c_1 T^{1/3}$ and $t=\alpha_0 T-\tilde c_2\tau T^{2/3}$ with constants from \eqref{eqConstantsTilde}. Then by Proposition~\ref{PropStartingIdentity} we have
\begin{equation}
\begin{aligned}
&\Pb(x^f_{\alpha T}(T)\geq \xi T-S \tilde c_1 T^{1/3})\\
&=\Pb\big(x_{\alpha T}(t)\geq \xi T-S \tilde c_1 T^{1/3}-f(T-t),\,\forall\, t\in[0,T]\big)\\
&=\Pb\big(x_{\alpha T}(\alpha_0 T-\tilde c_2\tau T^{2/3})\geq \xi T-S\tilde c_1 T^{1/3}-f((1-\alpha_0)T+\tilde c_2\tau T^{2/3}),\, \forall \,\tau\in I_{T}\big)\\
&=\Pb\Big(\widetilde X_T(\tau)\leq S+\frac{\xi T-f((1-\alpha_0)T+\tilde c_2\tau T^{2/3})-\tilde \mu(\tau,T)}{-\tilde c_1 T^{1/3}},\,\forall\, \tau\in I_T\Big),
\end{aligned}
\end{equation}
where $I_T=[-(1-\alpha_0) \tilde c_2^{-1}T^{1/3},\alpha_0 \tilde c_2^{-1}T^{1/3}]$.

Under \eqref{eq1.70} we will have that for $|\tau|\gg 1$ the condition is satisfied with high probability. Under \eqref{eq1.69} we then get that the contribution for $\tau$ of order $1$ will give
\begin{equation}
\Pb(\widetilde X_T(\tau)\leq S-\tau^2+g_T(\tau),\,\forall\,\tau=\Or(1)),
\end{equation}
which due to \eqref{eq1.62} will lead, in the $T\to\infty$ limit, to
\begin{equation}
\Pb({\cal A}_2(\tau)\leq S+g(\tau),\,\forall\, \tau\in\R).
\end{equation}

Let $\ell_0\in \N$ be a number to be chosen later (independently of $T$). Since we assumed that $g_T$ converges to $g$ uniformly on compact sets, for any $\e>0$, $\sup_{\tau\in [-\ell_0 \varkappa,\ell_0 \varkappa]} |g_T(\tau)-g(\tau)|\leq \e$ for all $T$ large enough. Therefore, for any interval $[a,b]$ where $g$ is continuous, as $\widetilde X_T(\tau)$ is tight in the space of continuous functions on it, then also $\widetilde X_T(\tau)+\tau^2-g_T(\tau)$ is tight in the same space as well and, by Corollary~\ref{CorWeakConvergence}, we have
\begin{equation}\label{eq3.28}
\sup_{\tau\in [a,b]}\{\widetilde X_T(\tau)+\tau^2-g_T(\tau)\}\Rightarrow \sup_{\tau\in [a,b]}\{{\cal A}_2(\tau) -g(\tau)\}.
\end{equation}
In our case, for any $\ell_0>0$, the interval $[-\ell_0\varkappa,\ell_0\varkappa]$ can be decomposed into the union of intervals $[a_1,a_2]$, $(a_2,a_3]$, $\ldots$, $(a_{n-1},a_n]$ such that \eqref{eq3.28} holds jointly for all these intervals, since the supremum is a continuous function in its (finitely many) arguments. From this we get
\begin{equation}
\lim_{T\to\infty} \Pb\Big(\sup_{\tau\in [-\ell_0 \varkappa,\ell_0 \varkappa]}\{\widetilde X_T(\tau)+\tau^2-g_T(\tau)\}\leq S\Big) = \Pb\Big(\sup_{\tau\in [-\ell_0 \varkappa,\ell_0 \varkappa]}\{{\cal A}_2(\tau) -g(\tau)\}\leq S\Big).
\end{equation}
The reason to consider a region of size $\ell_0 \varkappa$ and not simply $\varkappa$ is that later we will control the increments in pieces of size $\varkappa$ and apply a rescaled version of Theorem~\ref{thmComparison}. Notation-wise we found it simpler.

Furthermore, if $g(\tau)\geq c \tau^2-M_1$ for some constants $M_1$ and $c\in (1/4,1]$ then by Theorem~3.2 (see also Theorem 1.19) of~\cite{QR16}
\begin{equation}
\lim_{\varkappa\to\infty} \Pb\Big(\sup_{\tau\in [-\ell_0 \kappa,\ell_0 \varkappa]}\{{\cal A}_2(\tau) -g(\tau)\}\leq S\Big) = \Pb\Big(\sup_{\tau\in\R}\{{\cal A}_2(\tau) -g(\tau)\}\leq S\Big).
\end{equation}
Thus, to prove Theorem~\ref{ThmOnePointSituation} we need to show that
\begin{equation}\label{eq1.60}
\lim_{\varkappa\to\infty} \lim_{T\to\infty} \Pb\Big(\sup_{|\tau|\geq \ell_0 \varkappa}\{\widetilde X_T(\tau)+\tau^2-g_T(\tau)\}\leq S\Big)=1,
\end{equation}
which in the unscaled version reads
\begin{equation}
\lim_{\varkappa\to\infty} \lim_{T\to\infty} \Pb\big(x_{\alpha T}(t)\geq \xi T-S \tilde c_1 T^{1/3}-f(T-t),\,\forall\, t\in J_T\big)=1
\end{equation}
where $J_T=[0,T]\setminus[\alpha_0T-\varkappa\ell_0\tilde c_2 T^{2/3},\alpha_0T+\varkappa\ell_0\tilde c_2 T^{2/3}]$.

Combining Lemmas~\ref{lemmaVerySmallTimes},~\ref{lemmaSmallTimes},~\ref{lemmaMacroPoints},~\ref{lemModerate}, to be proven below, we get the following result:
\begin{equation}
\lim_{T\to\infty} \Pb\Big(\sup_{|\tau|\geq \ell_0 \varkappa}\{\widetilde X_T(\tau)+\tau^2-g_T(\tau)\}\leq S\Big)\geq 1-C e^{-c\varkappa}
\end{equation}
for some constants $C,c$. Taking $\varkappa\to\infty$ we reach \eqref{eq1.60}.
\end{proof}

\subsubsection{Bounds for regions macroscopically away from $t=\alpha_0 T$.}

\textbf{Small times easy bound.} Consider first the simple case $t\in [0,\beta_0 T]$ with \mbox{$\beta_0=\min\{\alpha,\alpha_0-\e\}$}. This corresponds to the case where particle $x_{\alpha T}(t)$ did not really move from its initial position $-\alpha T$.
\begin{lem}\label{lemmaVerySmallTimes}
Let $t\in [0,\min\{\alpha,\alpha_0-\e\} T]$. Then, for all $T$ large enough,
\begin{equation}
\Pb(x_{\alpha T}(t)\geq \xi T-S \tilde c_1 T^{1/3}-f(T-t),\,\forall\, 0\leq t\leq \beta_0 T) = 1.
\end{equation}
\end{lem}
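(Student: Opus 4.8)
The plan is to note that this is a \emph{deterministic} statement --- the probability equals $1$ for $T$ large --- so it suffices to check the pointwise inequality $x_{\alpha T}(t)\ge \xi T-S\tilde c_1 T^{1/3}-f(T-t)$ for every realization of the dynamics and every $t\in[0,\beta_0 T]$. The range $t\le\beta_0 T$, with $\beta_0=\min\{\alpha,\alpha_0-\varepsilon\}\le\alpha$, is exactly the one in which the tagged particle has not yet started to move, and this is quantified by the elementary monotonicity of the exclusion dynamics: particles jump only to the right, so $x_{\alpha T}(t)\ge x_{\alpha T}(0)=-\alpha T+1$ for all $t\ge0$. Hence it is enough to prove that
\begin{equation}
f(T-t)\ \ge\ (\xi+\alpha)T-S\tilde c_1 T^{1/3}-1\qquad\text{for all }t\in[0,\beta_0 T].
\end{equation}

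Because $f$ is non-decreasing and $T-t\ge(1-\beta_0)T$ on this range, it suffices to bound $f\big((1-\beta_0)T\big)$ from below, and here the structural observation is that $\beta_0\le\alpha$ forces $1-\beta_0\in[1-\alpha,1]$, the interval on which the threshold function $f_0$ from \eqref{eqF0} is constant and equal to $\xi+\alpha$. So whenever $(1-\beta_0)T$ lies macroscopically outside the critical window $[(\alpha_0-\varepsilon)T,(\alpha_0+\varepsilon)T]$, Assumption~\ref{AssumptionOnePoint}(a) applies directly and gives $f\big((1-\beta_0)T\big)\ge Tf_0(1-\beta_0)+K(\varepsilon)T=(\xi+\alpha)T+K(\varepsilon)T$, which exceeds the required bound for $T$ large. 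Should $(1-\beta_0)T$ instead fall inside that window --- a borderline situation occurring only for $\alpha_0$ in a narrow range --- I would either descend by monotonicity of $f$ to an argument just to the left of $(\alpha_0-\varepsilon)T$, where \eqref{eq1.70} applies (the resulting loss being $O(\varepsilon^2)$ since $1-\alpha$ is a critical point of $f_0$), or, when $(1-\beta_0)T$ is the right endpoint of the window covered by Assumption~\ref{AssumptionOnePoint}(b), read $f$ off of \eqref{eq1.69} at $\tau=\varepsilon\tilde c_2^{-1}T^{1/3}$ using $g_T(\tau)\ge-M+\tau^2/2$; a short computation with the explicit constants $\tilde\mu$, $\tilde c_1$, $\tilde c_2$ then again yields the bound.

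Combining the two estimates, for every realization and every $t\in[0,\beta_0 T]$,
\begin{equation}
x_{\alpha T}(t)+f(T-t)\ \ge\ (-\alpha T+1)+\big((\xi+\alpha)T-S\tilde c_1 T^{1/3}-1\big)\ =\ \xi T-S\tilde c_1 T^{1/3},
\end{equation}
for $T$ large enough, which is the assertion. The only non-routine point is the lower bound on $f$ at the single point $(1-\beta_0)T$ via Assumption~\ref{AssumptionOnePoint} --- trivial away from the borderline case, and a short explicit computation there; everything else is the observation that the tagged particle has not moved together with monotonicity of $f$, so the written proof should take only a few lines.
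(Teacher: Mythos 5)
Your proof is correct and follows essentially the same route as the paper: use the deterministic lower bound $x_{\alpha T}(t)\ge -\alpha T+1$ and the threshold Assumption~\ref{AssumptionOnePoint}(a), which on $[1-\alpha,1]$ forces $f$ to be at least $(\xi+\alpha)T+K(\e)T$, and let the macroscopic gap $K(\e)T$ absorb the $O(T^{1/3})$ correction. The one detour is your worry about a ``borderline'' case where $(1-\beta_0)T$ might sit inside a window of the form $[(\alpha_0-\e)T,(\alpha_0+\e)T]$; this is an artifact of reading \eqref{eq1.70} too literally. The critical window in Assumption~\ref{AssumptionOnePoint} is centered, in the time variable of \eqref{eqStartingPoint}, at $t=\alpha_0 T$ (so in the argument of $f$ it is centered at $(1-\alpha_0)T$, as part~(b) makes explicit and as the derivation of $f_0$ via \eqref{eq3.2}--\eqref{eq3.3} shows). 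Since every $t\in[0,\beta_0T]$ with $\beta_0=\min\{\alpha,\alpha_0-\e\}$ satisfies $\alpha_0 T-t\ge\e T$, part~(a) applies at every such $t$ directly, and no fallback through monotonicity of $f$ past the window, let alone through part~(b) and the constants $\tilde\mu,\tilde c_1,\tilde c_2$, is needed. The paper simply applies~(a) pointwise at each $t=\beta T$; your variant that first lowers to $t=\beta_0 T$ by monotonicity of $f$ and invokes~(a) once is an equally valid and marginally tidier packaging of the same idea.
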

\begin{proof}
We need to bound from below the probability
\begin{equation}
\begin{aligned}
&\Pb(x_{\alpha T}(t)\geq \xi T-S \tilde c_1 T^{1/3}-f(T-t),\,\forall\, 0\leq t\leq \beta_0 T)\\
&=1-\Pb(x_{\alpha T}(t)< \xi T-S \tilde c_1 T^{1/3}-f(T-t)\,\textrm{for some}\, 0\leq t\leq \beta_0 T).
\end{aligned}
\end{equation}
Let us denote $t=\beta T$. Then, \eqref{eq1.70} and \eqref{eqF0} give
\begin{equation}
\xi T-S \tilde c_1 T^{1/3}-f(T-t)\leq -\alpha T-K(\e) T-S \tilde c_1 T^{1/3},
\end{equation}
which for all fixed $\e>0$ (and thus fixed $K(\e)$) and $S$, is strictly smaller than $-\alpha T$ for all $T$ large enough. But $x_{\alpha T}(t)\geq -\alpha T$ for all times. This implies our claim.
\end{proof}

\medskip
\noindent \textbf{Small time interval when particle number $\alpha T$ could move.} Consider now slightly larger times, for which we can not yet use the asymptotic bounds from Lemma~\ref{LemOnePointBounds}.
\begin{lem}\label{lemmaSmallTimes}
Let us consider $\delta>0$ defined by $f_0(1-\alpha)-f_0(1-\alpha-\delta)=K(\e)/2$. Then for all $T$ large enough,
\begin{equation}
\Pb(x_{\alpha T}(t)\geq \xi T-S \tilde c_1 T^{1/3}-f(T-t),\,\forall\, \alpha T\leq t\leq (\alpha+\delta) T) = 1.
\end{equation}
\end{lem}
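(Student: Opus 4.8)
The plan is to establish the inequality in the statement \emph{deterministically} for all large $T$, so that the probability is literally $1$. The only probabilistic input needed is the trivial one: under the step initial condition every particle only moves to the right, so $x_{\alpha T}(s)\ge x_{\alpha T}(0)=-\alpha T+1$ for every $s\ge 0$. Hence it suffices to prove that, for all $T$ large enough,
\begin{equation*}
\xi T-S\tilde c_1 T^{1/3}-f(T-s)<-\alpha T\qquad\text{uniformly for }s\in[\alpha T,(\alpha+\delta)T],
\end{equation*}
i.e.\ that the wall, seen at time $T-s$, stays macroscopically above $(\xi+\alpha)T$ on this range of $s$.

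First I would record the elementary facts about $f_0$ that the choice of $\delta$ encodes. From \eqref{eqF0}, $f_0$ is non-decreasing on $[0,1]$: it is constant, equal to $\xi+\alpha$, on $[1-\alpha,1]$, and on $[0,1-\alpha)$ it equals $\gamma\mapsto\xi-(1-\gamma)+2\sqrt{\alpha(1-\gamma)}$, whose derivative $1-\sqrt{\alpha/(1-\gamma)}$ is $\ge 0$ there. A one-line computation gives
\begin{equation*}
f_0(1-\alpha)-f_0(1-\alpha-\delta)=2\alpha+\delta-2\sqrt{\alpha(\alpha+\delta)}=\bigl(\sqrt{\alpha+\delta}-\sqrt{\alpha}\bigr)^2,
\end{equation*}
so the defining relation for $\delta$ reads $\bigl(\sqrt{\alpha+\delta}-\sqrt{\alpha}\bigr)^2=K(\e)/2$, and in particular $\delta\downarrow 0$ as $K(\e)\downarrow 0$. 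Since \eqref{eq1.70} only becomes stronger when $K(\e)$ is replaced by a smaller positive constant, I may assume, shrinking $K(\e)$ and hence $\delta$ if necessary, that $\alpha+\delta<\min\{1-\alpha,\ \alpha_0-\e\}$: passing to a smaller $\delta$ only shrinks the time window in the statement, the omitted times being covered by Lemma~\ref{lemmaMacroPoints}, and it makes the bound below only easier.

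With this in hand the estimate is immediate. For $s=\beta T$ with $\beta\in[\alpha,\alpha+\delta]$ we have $|s-\alpha_0 T|=(\alpha_0-\beta)T>\e T$, so Assumption~\ref{AssumptionOnePoint}(a) applies at the point $T-s$ and gives $f(T-s)\ge T f_0(1-\beta)+K(\e)T$. Since $1-\beta\in[1-\alpha-\delta,1-\alpha]$ and $f_0$ is non-decreasing,
\begin{equation*}
f_0(1-\beta)\ge f_0(1-\alpha-\delta)=f_0(1-\alpha)-\tfrac12K(\e)=\xi+\alpha-\tfrac12K(\e),
\end{equation*}
whence $f(T-s)\ge(\xi+\alpha)T+\tfrac12K(\e)T$, and therefore
\begin{equation*}
\xi T-S\tilde c_1 T^{1/3}-f(T-s)\le -\alpha T-\tfrac12K(\e)T-S\tilde c_1 T^{1/3}<-\alpha T
\end{equation*}
for all $T$ large enough, since the fixed positive term $\tfrac12K(\e)T$ dominates $S\tilde c_1 T^{1/3}$ as $T\to\infty$, uniformly in $\beta$. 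Combined with $x_{\alpha T}(s)\ge -\alpha T+1>-\alpha T$, this yields the pointwise inequality for every $s$ in the window, so the probability in the statement equals $1$.

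I do not expect a genuine obstacle here: this is a deterministic, law-of-large-numbers comparison, and it is in fact \emph{cruder} than in the critical window, because on $[\alpha T,(\alpha+\delta)T]$ particle $\alpha T$ has barely moved, so the crude bound $x_{\alpha T}(s)\ge -\alpha T$ already suffices. The only points requiring a moment's care are the monotonicity of $f_0$, the identity $f_0(1-\alpha)-f_0(1-\alpha-\delta)=(\sqrt{\alpha+\delta}-\sqrt\alpha)^2$, and the bookkeeping ensuring that the window stays at macroscopic distance from $\alpha_0 T$ so that Assumption~\ref{AssumptionOnePoint}(a) is applicable. Conceptually, the mechanism is simply that the $+K(\e)T$ slack built into \eqref{eq1.70} is twice the $\tfrac12 K(\e)$ by which the threshold $f_0$ drops once one leaves its flat part.
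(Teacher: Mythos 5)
Your proof is correct and follows essentially the same line as the paper's: invoke Assumption~\ref{AssumptionOnePoint}(a) to lower-bound $f(T-t)$ by $T f_0(1-\beta)+K(\e)T$, use the monotonicity of $f_0$ and the defining relation for $\delta$ to get $f(T-t)\geq(\xi+\alpha)T+\tfrac12K(\e)T$, and then observe that the resulting barrier $\xi T-S\tilde c_1T^{1/3}-f(T-t)$ lies strictly below the deterministic floor $-\alpha T$ once $T$ is large, so the event holds with probability one.

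The one place where you are more explicit than the paper is in worrying whether Assumption~\ref{AssumptionOnePoint}(a) is actually applicable on the window $[\alpha T,(\alpha+\delta)T]$, i.e.\ whether $\alpha+\delta<\alpha_0-\e$. The paper does not comment on this and tacitly assumes it (the decomposition in Lemma~\ref{lemmaMacroPoints} with $J=[(\alpha+\delta)T,(\alpha_0-\e)T]\cup\cdots$ implicitly requires $\alpha+\delta<\alpha_0-\e$ to be non-degenerate). Your device of shrinking $K(\e)$ --- and hence $\delta$ --- is a legitimate way to enforce this, but note it technically changes the statement being proved: the lemma fixes $\delta$ via $f_0(1-\alpha)-f_0(1-\alpha-\delta)=K(\e)/2$ with the $K(\e)$ of the assumption, so proving the claim for a smaller $\delta'$ is a priori a weaker conclusion. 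This is harmless for the downstream use in Theorem~\ref{ThmOnePointSituation}, since Lemma~\ref{lemmaMacroPoints} covers $[(\alpha+\delta')T,(\alpha_0-\e)T]$ just as well; but if you want to be pedantic about proving the lemma as literally stated, the cleaner fix is to note that the constraint $\alpha+\delta<\alpha_0-\e$ is an implicit standing hypothesis (as the paper's interval decomposition makes clear) rather than to alter $K(\e)$. The explicit identity $f_0(1-\alpha)-f_0(1-\alpha-\delta)=(\sqrt{\alpha+\delta}-\sqrt\alpha)^2$ you compute is correct and a nice check, though the paper only uses monotonicity of $f_0$, which suffices.
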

\begin{proof}
Since $f_0$ is monotone, the assumption implies that for all $y\in [0,\delta]$, \mbox{$f_0(1-\alpha)-f_0(1-\alpha-y)\leq K(\e)/2$}. Then, for $t=\beta T$ with $\beta\in [\alpha,\alpha+\delta]$, we have
\begin{equation}
\xi T-S \tilde c_1 T^{1/3}-f(T-t)\leq -\alpha T-K(\e) T/2-S \tilde c_1 T^{1/3},
\end{equation}
and by the same argument as above we get for all $T$ large enough
\begin{equation}
\Pb(x_{\alpha T}(t)\geq \xi T-S \tilde c_1 T^{1/3}-f(T-t),\,\forall\, \alpha T\leq t\leq (\alpha+\delta) T) = 1.
\end{equation}
Notice that this case is needed only if $\alpha_0\geq \alpha+\delta$.
\end{proof}

\medskip
\noindent \textbf{Time interval $\beta T$ with $\beta\in[\alpha+\delta,\alpha_0-\e]\cup [\alpha_0+\e,1]$.}
Now we consider the rest of the time intervals which are macroscopically away from $\alpha_0 T$.
\begin{lem}\label{lemmaMacroPoints}
Denote $J=[(\alpha+\delta)T,(\alpha_0-\e)T]\cup[(\alpha_0+\e)T,T]$. Then
\begin{equation}\label{eq1.87}
\Pb(x_{\alpha T}(t)\geq \xi T-S \tilde c_1 T^{1/3}-f(T-t),\,\forall\, t\in J) \geq 1-C e^{-c\min\{K(\e)T^{2/3},T^{1/3}\}}
\end{equation}
for constants $C,c>0$ and all $T$ large enough.
\end{lem}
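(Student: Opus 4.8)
The plan is to turn the all‑times statement into boundedly many one‑point estimates, using the macroscopic lower bound of Assumption~\ref{AssumptionOnePoint}(a) together with the fact that, after the reduction, both sides of the inequality in question are monotone in $t$.

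First I would record the deterministic consequence of Assumption~\ref{AssumptionOnePoint}(a). For $t\in J$ one has $|t-\alpha_0 T|>\e T$ (the endpoints, where equality may hold, can be absorbed by shrinking $\e$ slightly), so, exactly as in the proofs of Lemmas~\ref{lemmaVerySmallTimes} and \ref{lemmaSmallTimes}, part (a) gives $f(T-t)\ge T f_0((T-t)/T)+K(\e)T$. Since $J\subset((\alpha+\delta)T,T]$ we have $t/T>\alpha$, hence $(T-t)/T<1-\alpha$, so the first branch of \eqref{eqF0} is in force and a direct computation gives $\xi-f_0((T-t)/T)=\sqrt{t/T}\,(\sqrt{t/T}-2\sqrt\alpha)=\bar x(t)/T$, where $\bar x(t)$ denotes the law‑of‑large‑numbers location of $x_{\alpha T}(t)$ from \eqref{eqLLN}. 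Consequently
\[
\xi T-S\tilde c_1 T^{1/3}-f(T-t)\ \le\ \bar x(t)-K(\e)T-S\tilde c_1 T^{1/3}\ \le\ \bar x(t)-\tfrac12 K(\e)T
\]
for all $t\in J$ and all $T$ large, so it suffices to bound $\Pb\big(x_{\alpha T}(t)<\bar x(t)-\tfrac12 K(\e)T\ \text{for some }t\in J\big)$.

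Next I would discretize, exploiting monotonicity. The map $t\mapsto\bar x(t)$ is $1$‑Lipschitz on $[0,T]$ (its derivative equals $1-\sqrt{\alpha T/t}\in[0,1)$ for $t\ge\alpha T$ and $0$ otherwise), so I can cut each of the two intervals comprising $J$ into consecutive subintervals $[t_i,t_{i+1}]\subset J$ of length at most $\tfrac14 K(\e)\,T$; there are only $\Or(1)$ of them, and on each $\bar x$ varies by at most $\tfrac14 K(\e)T$. Since $t\mapsto x_{\alpha T}(t)$ is non‑decreasing and $t\mapsto \xi T-S\tilde c_1 T^{1/3}-f(T-t)$ is non‑decreasing as well ($f$ is non‑decreasing), the bad event on the $i$‑th subinterval is contained in $\{x_{\alpha T}(t_i)<\bar x(t_i)-\tfrac14 K(\e)T\}$. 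For those $i$ with $\bar x(t_i)-\tfrac14 K(\e)T<-\alpha T$ this probability is $0$, because $x_{\alpha T}(t)\ge-\alpha T$ for every $t$; it is precisely the choice of $\delta$ in Lemma~\ref{lemmaSmallTimes} that ensures the remaining grid points $t_i$ all lie at times with $\alpha T/t_i$ in a fixed compact subset of $(0,1)$, where the bulk one‑point asymptotics hold with uniform constants.

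Finally, for each remaining grid point the one‑point lower‑deviation estimate (the step‑initial‑condition bound behind \eqref{eq1.16}, which with the same proof holds uniformly over $t_i$ with $\alpha T/t_i$ in a compact subset of $(0,1)$; cf.\ Lemma~\ref{LemOnePointBounds}) yields $\Pb\big(x_{\alpha T}(t_i)<\bar x(t_i)-\tfrac14 K(\e)T\big)\le C e^{-c K(\e)T^{2/3}}$, and summing over the $\Or(1)$ subintervals gives $\Pb(\exists\,t\in J:\dots)\le C e^{-cK(\e)T^{2/3}}$; since this is in particular at most $C e^{-c\min\{K(\e)T^{2/3},\,T^{1/3}\}}$, the bound \eqref{eq1.87} follows. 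The only genuinely delicate point is the deterministic reduction in the second paragraph — matching $f_0$ with the shape function $\bar x$ uniformly on all of $J$, including the corner $t\to T$, where the assumed inequality $f(T-t)\ge Tf_0((T-t)/T)+K(\e)T$ is weaker than the bare bound $f\ge0$ but (thanks to $K(\e)\le -f_0(0)$, i.e.\ $\xi\le 1-2\sqrt\alpha-K(\e)$) still supplies the required $K(\e)T$ macroscopic gap; once that gap is in hand, everything reduces to a union bound over boundedly many monotone events and the standard one‑point tail estimates already used in the paper.
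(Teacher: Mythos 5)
Your proposal is correct, and it proves the lemma by a route that is genuinely different from the paper's. The paper cuts $J$ into $\Or(T)$ unit-length time boxes; on each box it controls how much $x_{\alpha T}$ can move by a Poisson tail bound (a TASEP particle jumps at rate at most $1$, so the chance of moving $\gtrsim T^{1/3}$ in unit time is $\lesssim e^{-T^{1/3}}$), notes that $f$ changes by $\Or(1)$ across the box, applies the one-point lower-tail bound from Lemma~\ref{LemOnePointBounds} at the left endpoint, and then union-bounds over the $\Or(T)$ boxes; this is what produces the $\min\{K(\e)T^{2/3},\,T^{1/3}\}$ in the exponent. You instead observe that both $t\mapsto x_{\alpha T}(t)$ and $t\mapsto \xi T-S\tilde c_1 T^{1/3}-f(T-t)$ are non-decreasing (the latter because $f$ is assumed non-decreasing), so after the deterministic reduction to $\bar x(t)-K(\e)T/2$ it suffices to cut $J$ into $\Or(K(\e)^{-1})=\Or(1)$ subintervals of macroscopic length $\tfrac14 K(\e)T$ and test the inequality at a single grid point per piece, using the $1$-Lipschitz bound on $\bar x$. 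This entirely avoids the Poisson increment estimate and the $\Or(T)$-fold union bound, and as a side effect yields the cleaner bound $Ce^{-cK(\e)T^{2/3}}$ without the $T^{1/3}$ cap; the stated estimate \eqref{eq1.87} follows a fortiori. Your handling of the boundary issues (grid points with $\alpha T/t_i$ in a compact subset of $(0,1)$, the endpoints where $|t-\alpha_0 T|=\e T$, and the consistency condition $K(\e)\le -f_0(0)$ at $t=T$) is also sound. The only thing worth flagging is that your argument leans on the monotonicity of $f$, whereas the paper's unit-box argument technically only needs $f$ to have $\Or(1)$ variation across unit intervals; since $f$ is in fact assumed non-decreasing throughout the paper, this is not a restriction here.
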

\begin{proof}
We compute the complementary probability and divide the time interval into pieces of length $1$. We need to find an upper bound for
\begin{equation}\label{eq1.88}
\Pb(\exists t\in[\beta T,\beta T+1] \,|\, x_{\alpha T}(t)< \xi T-S \tilde c_1 T^{1/3}-f(T-t)).
\end{equation}
In a time interval of order $1$ the evolution of any given particle is stochastically bounded by a Poisson process with intensity $1$, due the fact that it can not move faster than the case where the blocking interactions with other particles are removed. Therefore, the probability of making at least $k$ steps is bounded by $\Pb(\Xi_{\rm Poi}\geq k)$ where $\Xi_{\rm Poi}$ has Poisson distribution with parameter $1$. As $\Pb(\Xi_{\rm Poi}\geq k)\leq 4 e^{-k}$ for all $k\geq 0$,
\begin{equation}
\Pb(|x_{\alpha T}(\beta T+1)-x_{\alpha T}(\beta T)|\geq T^{1/3})\leq 4 e^{-T^{1/3}}
\end{equation}
Also, $|f(T-t)-f(T-\beta T)|=\Or(1)$ for $t\in[\beta T,\beta T+1]$. This means that
\begin{equation}\label{eq1.85}
\begin{aligned}
\eqref{eq1.88}&\leq 4e^{-T^{1/3}}+\Pb(x_{\alpha T}(\beta T)< \xi T-(S \tilde c_1-1) T^{1/3}-f(T-\beta T))\\
&\leq 4 e^{-T^{1/3}}+\Pb(x_{\alpha T}(\beta T)< \sqrt{\beta}(\sqrt{\beta}-2\sqrt{\alpha})T-K(\e)T-(S \tilde c_1-1) T^{1/3})\\
&\leq 4 e^{-T^{1/3}}+\Pb(x_{\alpha T}(\beta T)< \sqrt{\beta}(\sqrt{\beta}-2\sqrt{\alpha})T-K(\e)T/2)
\end{aligned}
\end{equation}
for all $T$ large enough, where in the first inequality we used the assumption \eqref{eq1.70}, while the second inequality holds for all $T$ large enough since $K(\e)>0$.
Applying the upper tail bound in Lemma~\ref{LemOnePointBounds} with the replacements $t\to \beta T$ and $\alpha\to \alpha/\beta$ we obtain
\begin{equation}
\eqref{eq1.85}\leq 4 e^{-T^{1/3}}+ C e^{-c K(\e) T^{2/3}}
\end{equation}
for some new constants $C,c$ (which can be chosen uniformly for $\beta\in [\alpha+\delta,1]$). Since the total number of size $1$ segments in our decomposition is $\Or(T)$, the complementary probability of \eqref{eq1.87} is bounded by $T (4 e^{-T^{1/3}}+ C e^{-c K(\e) T^{2/3}})$. Thus, for a new constant $c$ the claimed result holds true.
\end{proof}

\subsubsection{Bound in moderate deviation regimes.}
Now we turn to the delicate bounding for times in an $\e T$ neighborhood of $\alpha_0 T$. The strategy is to divide the time interval into pieces of size $\varkappa$ (in the scaled variable) and bound the process in this part by controlling the starting point as well as the fluctuations. The latter are controlled using the comparison with stationarity from Section~\ref{SectTightness}.
\begin{lem}\label{lemModerate}
There exists an $\e_0>0$ such that for all $\e\in (0,\e_0]$ and $T$ large enough,
\begin{equation}
\begin{aligned}
&\Pb(x_{\alpha T}(t)\geq \xi T-f(T-t)-S \tilde c_1 T^{1/3},\,\forall t\in[\alpha_0 T-\e T,\alpha_0 T-\tilde c_2 \ell_0 \varkappa T^{2/3}])\\
&\geq 1-C e^{-c\varkappa\ell_0^2},
\end{aligned}
\end{equation}
where $\ell_0$ is a positive number depending only on $\alpha,\alpha_0$, and $\varkappa$ is any number in $[1,\e T^{1/3}]$. The coefficients $C,c$ are uniform in $T$. Similarly,
\begin{equation}
\begin{aligned}
&\Pb(x_{\alpha T}(t)\geq \xi T-f(T-t)-S \tilde c_1 T^{1/3},\,\forall t\in[\alpha_0 T+\tilde c_2 \ell_0 \varkappa T^{2/3},\alpha_0 T+\e T])\\
&\geq 1-C e^{-c\varkappa \ell_0^2}.
\end{aligned}
\end{equation}
\end{lem}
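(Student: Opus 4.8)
The plan is to pass to the rescaled process $\widetilde X_T$ of \eqref{eqRescStep}, cut the time window into blocks of rescaled length $\varkappa$, and on each block control separately the value of $\widetilde X_T$ at the block's initial time (by a one-point estimate) and the oscillation of $\widetilde X_T$ over the block (by the comparison with stationary TASEPs of Section~\ref{SectTightness}).

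First I would reformulate the event. Writing $T-t=(1-\alpha_0)T+\tilde c_2\tau T^{2/3}$, Assumption~\ref{AssumptionOnePoint}(b) gives $\xi T-f(T-t)=\tilde\mu(\tau,T)+\tilde c_1(\tau^2-g_T(\tau))T^{1/3}$, so $\{x_{\alpha T}(t)\ge \xi T-f(T-t)-S\tilde c_1T^{1/3}\}$ becomes $\{\widetilde X_T(\tau)\le S+g_T(\tau)-\tau^2\}$; since $g_T(\tau)\ge -M+\tau^2/2$ throughout the range, it suffices to prove $\Pb(\widetilde X_T(\tau)\le S-M-\tau^2/2\text{ for all }\tau\in[\ell_0\varkappa,\e\tilde c_2^{-1}T^{1/3}])\ge 1-Ce^{-c\varkappa\ell_0^2}$ for the first interval, and the analogous bound on $[-\e\tilde c_2^{-1}T^{1/3},-\ell_0\varkappa]$ for the second. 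Next I would record how much room there is: a Taylor expansion of the deterministic profile $(1-2\sqrt{\alpha T/t})\,t$ about $t=\alpha_0 T$ shows that the deterministic part of $\widetilde X_T(\tau)$ equals $-\tau^2$ plus a correction of order $|\tau|^3/T^{1/3}$ which is $\le 0$ for $\tau\ge 0$ (so it only helps on the first interval) and $\ge 0$ for $\tau\le 0$. Choosing $\e_0$ small, depending only on $\alpha,\alpha_0$, this correction is at most $\tau^2/4$ in absolute value on $|\tau|\le \e\tilde c_2^{-1}T^{1/3}$ when $\e\le\e_0$; hence, with $\ell_0$ large (depending only on $\alpha,\alpha_0$, the fixed $S,M$ being absorbed into $C,c$ so that the statement is anyway close to vacuous for the small $\varkappa$ where it would fail), the barrier $S-M-\tau^2/2$ exceeds the deterministic part of $\widetilde X_T(\tau)$ by at least $\tau^2/8$ for all $\tau$ in the range. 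This is where $\e_0$ is used, and the only place the two intervals differ.

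Then comes the block argument: partition $[\ell_0\varkappa,\e\tilde c_2^{-1}T^{1/3}]$ into $I_\ell=[\ell\varkappa,(\ell+1)\varkappa]$, $\ell=\ell_0,\dots,L_T$ with $L_T=\Or(T^{1/3}/\varkappa)$, and union-bound over $\ell$. On $I_\ell$, by the previous paragraph a failure forces $\widetilde X_T$ to exceed its deterministic part by at least $c(\ell\varkappa)^2$ at some $\tau\in I_\ell$; writing this as the value at $\tau=\ell\varkappa$ plus the block-increment, failure forces either (i) $x_{\alpha T}(t_\ell)$ with $t_\ell=\alpha_0 T-\tilde c_2\ell\varkappa T^{2/3}$ to lie $\gtrsim(\ell\varkappa)^2T^{1/3}$ to the left of its law-of-large-numbers position, or (ii) $\sup_{\tau\in I_\ell}\bigl(\widetilde X_T(\tau)-\widetilde X_T(\ell\varkappa)\bigr)\ge c(\ell\varkappa)^2$. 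Event (i) has probability $\le Ce^{-c(\ell\varkappa)^2}$ by the one-point tail estimates (the analogue of Lemma~\ref{lemLocalStep} / Lemma~\ref{LemOnePointBounds} with reference time $\alpha_0 T$ in place of $T$). For (ii) I would split on the size of $\ell^2\varkappa$ relative to $T^{1/3}$. When $\ell^2\varkappa$ is large compared with $T^{1/3}$, so the gap $(\ell\varkappa)^2$ dominates the free-particle displacement $\Or(\varkappa)$ over a block, the crude bound $\widetilde X_T(\tau)-\widetilde X_T(\ell\varkappa)\le \mathrm{Poi}(\tilde c_2\varkappa T^{2/3})/(\tilde c_1T^{1/3})$ (a particle jumps at rate at most $1$) already gives a tail at level $c(\ell\varkappa)^2$ of order $Ce^{-c(\ell\varkappa)^2T^{1/3}}\le Ce^{-c\ell^2\varkappa}$. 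When $\ell^2\varkappa$ is small compared with $T^{1/3}$, I would apply Theorem~\ref{thmComparison} (through Propositions~\ref{propComparison1}--\ref{propComparison} and Lemma~\ref{lemGlobalEvents}) localized to $I_\ell$ about time $t_\ell$, with reference density $\sqrt{\alpha T/t_\ell}$ and comparison densities $\sqrt{\alpha T/t_\ell}\pm\kappa_\ell T^{-1/3}$ where $\kappa_\ell=\delta_0\ell^2\varkappa$ for a small $\delta_0=\delta_0(\alpha,\alpha_0)$: in this regime the densities stay in a fixed compact subset of $(0,1)$, the corresponding good event has complement of probability $\le Ce^{-c\kappa_\ell}$, and on it the block-increment of $\widetilde X_T$ is dominated by that of a rescaled stationary (Poisson) process, whose drift is $\Or(\kappa_\ell\varkappa)\le c(\ell\varkappa)^2/2$ (taking $\delta_0$ small) and whose centred part, handled by Doob's inequality and a Poisson deviation bound, exceeds $c(\ell\varkappa)^2/2$ with probability at most $Ce^{-c(\ell\varkappa)^4/\varkappa}$. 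Summing (i) and (ii) over $\ell\ge\ell_0$, the dominant contribution is $\sum_{\ell\ge\ell_0}Ce^{-c\ell^2\varkappa}\le Ce^{-c\ell_0^2\varkappa}$ since $\varkappa\ge1$, which is the claimed estimate; the second interval is treated in exactly the same way.

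The step I expect to be the main obstacle is running the stationary comparison uniformly over the $\Or(T^{1/3}/\varkappa)$ blocks with total error $e^{-c\ell_0^2\varkappa}$: enlarging $\kappa_\ell$ shrinks the per-block comparison error $e^{-c\kappa_\ell}$ (which must decay like $e^{-c\ell^2\varkappa}$ to make the sum converge to the required bound), but $\kappa_\ell$ must stay $\ll T^{1/3}$ for the perturbed density to be admissible, so $\kappa_\ell\asymp\ell^2\varkappa$ cannot be maintained once $\ell^2\varkappa$ reaches $T^{1/3}$, and one must also keep the comparison's own induced drift $\Or(\kappa_\ell\varkappa)$ below the barrier gap. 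The resolution is precisely the regime split above: by the time the fine comparison breaks down, the barrier gap $(\ell\varkappa)^2$ has already outgrown the $\Or(\varkappa T^{1/3})$ scale of the free displacement, so a crude drift-free Poissonian bound suffices. The only other delicate point, which is more routine, is getting the sign of the subleading $|\tau|^3/T^{1/3}$ term in the law-of-large-numbers expansion right, as this dictates the choice of $\e_0$ on the second time interval.
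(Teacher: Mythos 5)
Your proposal is correct and follows the paper's general scheme: reparametrize via $\widetilde X_T$, use $g_T(\tau)\ge-M+\tau^2/2$ to reduce to a parabolic barrier, tile $[\ell_0\varkappa,\varepsilon\tilde c_2^{-1}T^{1/3}]$ into blocks of width $\varkappa$, and on each block split the failure event into a one-point deviation of $\widetilde X_T$ at a block endpoint (handled via Lemma~\ref{LemOnePointBounds}) and a block-oscillation (handled via the stationary comparison of Theorem~\ref{thmComparison} plus Doob/Chernoff for the resulting Poisson process); you also correctly identify the sign of the sublinear $|\tau|^3/T^{1/3}$ correction and that $\varepsilon_0$ is needed to tame it on the $\tau<0$ side only. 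The paper uses the far endpoint $\tau=\varkappa(\ell+1)$ as the block reference (which matches the directionality of Theorem~\ref{thmComparison} directly) rather than your near endpoint $\ell\varkappa$, but this is purely notational.

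The one place you genuinely deviate is the regime split on $\ell^2\varkappa$ versus $T^{1/3}$, which the paper does not make explicit. In the paper's version, the comparison parameter $\kappa$ is taken $\asymp\varkappa(\ell+1)^2$ for \emph{all} blocks up to $\ell\lesssim\varepsilon\tilde c_2^{-1}T^{1/3}/\varkappa$; plugging in gives $\kappa t^{-1/3}\asymp\varkappa\ell^2/T^{1/3}$, and for small $\varkappa$ and $\ell$ near its maximum this is $\asymp T^{1/3}/\varkappa\gg1$, so the perturbed densities $\rho_\pm=\rho_0\pm\kappa t^{-1/3}$ leave $(0,1)$ and the stationary comparison is not applicable as stated. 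Your split — fine stationary comparison with $\kappa_\ell=\delta_0\ell^2\varkappa$ when $\ell^2\varkappa\lesssim T^{1/3}$, and a one-sided crude bound $\sup_{\tau\in I_\ell}\bigl(\widetilde X_T(\tau)-\widetilde X_T(\ell\varkappa)\bigr)\le\mathrm{Poi}(\tilde c_2\varkappa T^{2/3})/(\tilde c_1T^{1/3})$ with a linear-regime Poisson tail $\le Ce^{-c(\ell\varkappa)^2T^{1/3}}\le Ce^{-c\ell^2\varkappa}$ when $\ell^2\varkappa\gtrsim T^{1/3}$ — closes this and does so exactly at the crossover between the Gaussian and linear regimes of the Poisson tail, which is the natural place. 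This makes your argument slightly more robust than the paper's as written, at the cost of one additional (elementary) case.
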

\begin{proof}
Let us bound the first probability
\begin{equation}\label{eq1.88bis}
\Pb(x_{\alpha T}(t)\geq \xi T-f(T-t)-S \tilde c_1 T^{1/3},\,\forall t\in[\alpha_0 T-\e T,\alpha_0 T-\tilde c_2 \ell_0 \varkappa T^{2/3}]).
\end{equation}
The bound of
\begin{equation}\label{eq1.88b}
\Pb(x_{\alpha T}(t)\geq \xi T-f(T-t)-S \tilde c_1 T^{1/3},\,\forall t\in[\alpha_0 T+\tilde c_2 \ell_0 \varkappa T^{2/3},\alpha_0 T+\e T])
\end{equation}
follows along the same steps and therefore we skip the details for that case.

Using the parametrization $t=\alpha_0 T-\tilde c_2\tau T^{2/3}$, $f$ as given in \eqref{eq1.69}, and $\widetilde X_T$ as defined in \eqref{eq1.62}, we obtain
\begin{equation}
\begin{aligned}
\eqref{eq1.88bis} &= \Pb(\widetilde X_T(\tau)\leq S-\tau^2+g_T(\tau),\,\forall\tau\in[\ell_0\varkappa,\e \tilde c_2^{-1}T^{1/3}])\\
&=1-\Pb\Big(\sup_{\tau \in[\ell_0 \varkappa,\e \tilde c_2^{-1}T^{1/3}]} [\widetilde X_T(\tau)+\tau^2-g_T(\tau)]>S\Big).
\end{aligned}
\end{equation}
We decompose the time interval into pieces of width $\varkappa$ and get
\begin{equation}\label{eq1.90}
1-\eqref{eq1.88bis}\leq \sum_{\ell=\ell_0}^{\e\tilde c_2^{-1}T^{1/3}/\varkappa} \Pb\Big(\sup_{\tau\in [\varkappa\ell,\varkappa (\ell+1)]}[\widetilde X_T(\tau)+\tau^2-g_T(\tau)]>S\Big).
\end{equation}
Next we use the assumption \eqref{eq1.72} on $g_T$, namely $-g_T(\tau)\leq -\frac12\tau^2 +M$. Then we get
\begin{equation}\label{eq3.45}
\begin{aligned}
&\Pb\Big(\sup_{\tau\in [\varkappa\ell,\varkappa (\ell+1)]}[\widetilde X_T(\tau)+\tau^2-g(\tau)]>S\Big)\\
&\leq\Pb\Big(\sup_{\tau\in [\varkappa\ell,\varkappa (\ell+1)]}[\widetilde X_T(\tau)+\tfrac12\tau^2]>S-M\Big)\\
&\leq\Pb\Big(\sup_{\tau\in [\varkappa\ell,\varkappa (\ell+1)]}\widetilde X_T(\tau)> S-M-\tfrac12\varkappa^2(\ell+1)^2\Big)\\
&\leq\Pb\left(\widetilde X_T(\varkappa (\ell+1))> S-M-\tfrac34\varkappa^2(\ell+1)^2\right)\\
&\quad+\Pb\Big(\sup_{\tau\in [\varkappa\ell,\varkappa (\ell+1)]}[\widetilde X_T(\tau)-\widetilde X_T(\varkappa(\ell+1))]>\tfrac14 \varkappa^2(\ell+1)^2\Big).
\end{aligned}
\end{equation}
To estimate $\Pb(\widetilde X_T(\varkappa (\ell+1))> S-M-\tfrac34\varkappa^2(\ell+1)^2)$ we apply Lemma~\ref{LemOnePointBounds} with $t=\alpha_0 T-\tilde c_2\varkappa\ell T^{2/3}$ and $\alpha\to \tilde\alpha=\alpha T/t$. A computation gives
\begin{equation}
(1-2\sqrt{\tilde \alpha}) t = \tilde\mu(\varkappa(\ell+1),T)+\tilde c_1 \varkappa^2(\ell+1)^2 T^{1/3}(1+\Or(\varkappa\ell)).
\end{equation}
Therefore
\begin{equation}
\Pb(\widetilde X_T(\varkappa (\ell+1))> S-M-\tfrac34\varkappa^2(\ell+1)^2) = \Pb(x_{\tilde\alpha t}(t)< (1-2\sqrt{\tilde\alpha})t-\tilde s \tilde c_1 t^{1/3})
\end{equation}
with
\begin{equation}
\tilde s= S-M-\tfrac34\varkappa^2(\ell+1)^2+\varkappa^2(\ell+1)^2\left(1+\Or(\varkappa\ell t^{-1/3})\right).
\end{equation}
Since $\ell\leq  \e\tilde c_2^{-1}T^{1/3}/\varkappa$, where $\e>0$ can be chosen as small as desired (but independent of $T$), we take first $\e$ small enough so that \begin{equation}
-\tfrac34\varkappa^2(\ell+1)^2+\varkappa^2(\ell+1)^2\left(1+\Or(\varkappa\ell t^{-1/3})\right)\geq \tfrac18\varkappa^2(\ell+1)^2> \tfrac18\varkappa^2\ell^2.
\end{equation}
Then Lemma~\ref{LemOnePointBounds} gives
\begin{equation}
\Pb(\widetilde X_T(\varkappa (\ell+1))> S-M-\tfrac34\varkappa^2(\ell+1)^2) \leq C e^{-c S} e^{-c\varkappa^2\ell^2/8}.
\end{equation}

The increments over a time span $\varkappa$ of $\widetilde X_T$ (the final term of \eqref{eq3.45}) are controlled using the comparison with stationary as follows. We apply Theorem~\ref{thmComparison} with the following change of variables
\begin{equation}
\begin{aligned}
T &\to \alpha_0 T-\tilde c_2\varkappa\ell T^{2/3},\\
t=T-\varkappa T^{2/3} & \to \alpha_0 T-\tilde c_2 \varkappa (\ell+1)T^{2/3},\\
\rho_0=\sqrt{\alpha T/t}& \to \rho_0=\sqrt{\alpha T/(\alpha_0 T-\tilde c_2 \varkappa (\ell+1)T^{2/3})}.
\end{aligned}
\end{equation}
A computation gives
\begin{equation}
\rho_0=\frac{\sqrt{\alpha}}{\sqrt{\alpha_0}}+\frac12\tilde c_2 (\ell+1)\frac{\sqrt{\alpha}}{\alpha_0^{3/2}}\varkappa T^{-1/3}+\Or(\ell^2\varkappa^2 T^{-2/3}).
\end{equation}
Therefore we consider stationary TASEP with densities $\rho_\pm$ given by
\begin{equation}
\rho_\pm=\rho_0 \pm \kappa t^{-1/3} =
\frac{\sqrt{\alpha}}{\sqrt{\alpha_0}}+\Big(\frac12\tilde c_2 (\ell+1)\frac{\sqrt{\alpha}}{\alpha_0^{3/2}}\varkappa\pm \frac{\kappa}{\alpha_0^{1/3}}\Big)T^{-1/3} +\Or(\ell^2\varkappa^2,\ell \kappa)T^{-2/3}.
\end{equation}
Then, Theorem~\ref{thmComparison} tells us that for all $t_1,t_2$ such that $\alpha_0 T-\tilde c_2 \varkappa (\ell+1) T^{2/3}\leq t_1<t_2\leq \alpha_0 T-\tilde c_2 \varkappa \ell T^{2/3}$, with  probability at least $1-C e^{-c\kappa}$,
\begin{equation}
x^{\rho_+}_M(t_2)-x^{\rho_+}_M(t_1)\leq x_N(t_2)-x_N(t_1)\leq x^{\rho_-}_P(t_2)-x^{\rho_-}_P(t_1).
\end{equation}
Recall that the rescaled processes are divided by a negative number, see \eqref{eqRescStep} and \eqref{eqRescStat}. Therefore we get
\begin{multline}\label{eq1.103}
\Pb\Big(\sup_{\tau\in [\varkappa\ell,\varkappa (\ell+1)]}[\widetilde X_T(\tau)-\widetilde X_T(\varkappa(\ell+1))]>\tfrac14 \varkappa^2(\ell+1)^2\Big)
\leq C e^{-c\kappa}\\
 + \Pb\Big(\sup_{\tau\in [\varkappa\ell,\varkappa (\ell+1)]}[\widetilde B^{\rho_+}_T(\tau)-\widetilde B^{\rho_+}_T(\varkappa(\ell+1))]>\tfrac14 \varkappa^2(\ell+1)^2\Big).
\end{multline}

Let $\tilde\tau=\varkappa(\ell+1)-\tau \in [0,\varkappa]$. Then we need to bound
\begin{equation}
\sup_{\tilde \tau\in [0,\varkappa]}[\widetilde B^{\rho_+}_T(\varkappa (\ell+1)-\tilde\tau)-\widetilde B^{\rho_+}_T(\varkappa(\ell+1))].
\end{equation}
The increments of $x^{\rho_+}_M(\tau)-x^{\rho_+}_M(\varkappa(\ell+1))$ form a Poisson process with intensity $1-\rho_+$. By Lemma~\ref{LemPPtoBM} we know that $\widetilde B^{\rho_+}_T(\tau)-\widetilde B^{\rho_+}_T(\varkappa(\ell+1))$ converges to a Brownian motion with drift. A computation gives that the diffusion coefficient is $2$, while the drift is given by
\begin{equation}
\begin{aligned}
v&=\frac{\tilde\mu(\varkappa(\ell+1)-\tilde\tau,T)-\tilde\mu(\varkappa(\ell+1),T)+(1-\rho_+)\tilde c_2\tilde \tau T^{2/3}}{-\tilde c_1 T^{1/3}}\\
&=\frac{\tilde c_2}{\tilde c_1\alpha_0^{1/3}}\left(\kappa + \frac{2 (\sqrt{\alpha_0}-\sqrt{\alpha})^{2/3}\varkappa(\ell+1)}{\alpha^{1/3}}\right)+\Or(\varkappa\kappa\ell,\varkappa^2\ell^2)T^{-1/3}.
\end{aligned}
\end{equation}
Therefore we get
\begin{equation}\label{eq1.106}
\begin{aligned}
&\Pb\Big(\sup_{\tau\in [\varkappa\ell,\varkappa (\ell+1)]}[\widetilde B^{\rho_+}_T(\tau)-\widetilde B^{\rho_+}_T(\varkappa(\ell+1))]>\tfrac14 \varkappa^2(\ell+1)^2\Big)\\
&\leq \Pb\Big(\sup_{\tilde \tau\in [0,\varkappa]}[\widetilde B^{\rho_+}_T(\varkappa (\ell+1)-\tilde\tau)-\widetilde B^{\rho_+}_T(\varkappa(\ell+1))-v\tilde \tau]>\tfrac14 \varkappa^2(\ell+1)^2-v\varkappa\Big)\\
&=\Pb\Big(\sup_{\tilde \tau\in [0,\varkappa]}\frac{x_N^{\rho_+}(\tilde c_2\tilde \tau T^{2/3})-x_N^{\rho_+}(0)-(1-\rho_+)\tilde c_2\tilde \tau T^{2/3}}{-\tilde c_1 T^{1/3}}>\tfrac14 \varkappa^2(\ell+1)^2-v\varkappa\Big),
\end{aligned}
\end{equation}
where in the last step we used the stationarity of $\widetilde B^{\rho_+}$, which is given in terms of $x_N^{\rho_+}$.

Now recall that $\ell\lesssim \e T^{1/3}$ for an $\e>0$ which we could take arbitrarily small (but independent of $T$). Thus, by setting $\e$ small enough and then $T$ large enough, we get
\begin{equation}
v\leq \frac{2\tilde c_2}{\tilde c_1\alpha_0^{1/3}}\Big(\kappa + \frac{2 (\sqrt{\alpha_0}-\sqrt{\alpha})^{2/3}\varkappa(\ell+1)}{\alpha^{1/3}}\Big)\leq \tfrac18 \varkappa (\ell+1)^2
\end{equation}
by choosing $\kappa=\varkappa (\ell+1)^2 \tilde c_1\alpha_0^{1/3}/(32\tilde c_2)$ and taking $\ell\geq \ell_0$ for some $\ell_0$ large enough (depending only on $\alpha,\alpha_0$). Thus
\begin{equation}
\eqref{eq1.106}\leq \Pb\Big(\sup_{\tilde \tau\in [0,\varkappa]}\frac{x_N^{\rho_+}(\tilde c_2\tilde \tau T^{2/3})-x_N^{\rho_+}(0)-(1-\rho_+)\tilde c_2\tilde \tau T^{2/3}}{-\tilde c_1 T^{1/3}}>\tfrac18 \varkappa^2(\ell+1)^2\Big).
\end{equation}
Set $W(\tau)=\frac{x_N^{\rho_+}(\tilde c_2\tilde \tau T^{2/3})-x_N^{\rho_+}(0)-(1-\rho_+)\tilde c_2\tilde \tau T^{2/3}}{-\tilde c_1 T^{1/3}}$ and $C=\tfrac18 \varkappa^2(\ell+1)^2$. Then $W(\tau)$ is a martingale and for $\lambda>0$, $e^{\lambda W(\tau)}$ a positive submartingale. Thus, for $\lambda>0$, we have
\begin{equation}
\Pb\Big(\max_{\tilde \tau\in [0,\varkappa]} W(\tau)\geq C\Big)= \Pb\Big(\max_{\tilde \tau\in [0,\varkappa]} e^{\lambda W(\tau)}\geq e^{\lambda C}\Big)
\leq \frac{\E\big(e^{\lambda W(\varkappa)}\big)}{e^{\lambda C}}.
\end{equation}
As this holds for any $\lambda>0$, we get
\begin{equation}
\Pb\Big(\max_{\tilde \tau\in [0,\varkappa]} W(\tau)\geq C\Big)\leq \inf_{\lambda>0}\frac{\E\big(e^{\lambda W(\varkappa)}\big)}{e^{\lambda C}}.
\end{equation}
The computation of $\E\big(e^{\lambda W(\varkappa)}\big)$ is elementary as $W(\varkappa)$ comes from centering and scaling a Poisson distribution. Optimizing over $\lambda>0$ one finally gets a Gaussian decay of the probability we are considering: for all $T$ large enough,
\begin{equation}
\eqref{eq1.106}\leq C e^{-c \ell^2 \varkappa}
\end{equation}
for some new constants $C,c>0$ (depending on $\alpha$ but not on $T$ and $\varkappa$). Plugging this into \eqref{eq1.103} with $\kappa$ chosen as just mentioned, we finally get
$\eqref{eq1.103}\leq C e^{-c \varkappa \ell^2}$ for some new constants $C,c>0$.

With this bound, we can go back to our estimate \eqref{eq1.90} and obtain the following: there exists an $\e_0>0$, an $\ell_0\in\N$ (depending only on $\alpha,\alpha_0$) such that for all $\e\in (0,\e_0]$
\begin{equation}
\Pb\Big(\sup_{\tau \in[\ell_0 \varkappa,\e \tilde c_2^{-1}T^{1/3}]} (\widetilde X_T(\tau)+\tau^2-g_T(\tau))>S\Big)\leq C e^{-c\varkappa\ell_0^2}
\end{equation}
uniformly for all $T$ large enough.
Similarly we get
\begin{equation}
\Pb\Big(\sup_{\tau \in[-\e \tilde c_2^{-1}T^{1/3},-\ell_0 \varkappa]} (\widetilde X_T(\tau)+\tau^2-g_T(\tau))>S\Big)\leq C e^{-c\varkappa\ell_0^2}
\end{equation}
uniformly for all $T$ large enough.
\end{proof}

\section{Second class particle in a TASEP with a moving
wall}\label{SecSecondClass}

In this section we consider a TASEP with one second class particle. It has the following initial configuration: all negative positions are filled by first class particles, position 0 is filled by a second class particle, and all positive positions are empty (holes). In addition, we consider a moving wall which starts at position $c T>0$ and moves to the right with speed $v\geq 0$ (as in Section~\ref{secConstantSpeed}), i.e., at time $t \in [0,T]$ it is at position $cT + v t$. All particles jump with rate 1, as before. We denote the position of the second class particle at time $t$ by $\mathfrak{f} (t)$.

Let $\mathrm{Unif}(a,b)$ be the uniform distribution on a segment $(a,b) \subset \R$, and denote by $a \delta(b)$ the atomic measure of weight $a$ at the point $b$. The main result of this section is the following.
\begin{thm}
\label{th:sec-class}
Assume that $0 \le v < 1$, and $c>0$. \\
(a) If $v+c \le 1$, then we have
\begin{multline}
\lim_{T \to \infty} \frac{\mathfrak{f} (T)}{T} \stackrel{(d)}{=} \frac{1}{2} \mathrm{Unif} \left( -1, -1+2v + 2 \sqrt{c (1-v)} \right) \\ + \left( 1 - v -\sqrt{c(1-v)} \right) \delta \left( -1+2v + 2 \sqrt{c(1-v)} \right).
\end{multline}
(b) If  $v+c \geq 1$, then we have
\begin{equation}
\lim_{T \to \infty} \frac{\mathfrak{f} (T)}{T}  \stackrel{(d)}{=} \frac{1}{2} \mathrm{Unif}(-1, 1).
\end{equation}
\end{thm}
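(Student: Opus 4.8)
The strategy is to realise $\mathfrak{f}$ as the discrepancy between two single-species TASEPs that see the same wall, to determine the hydrodynamic profile of such a process, and then to combine this with the classical principle that a second class particle placed at the origin of a rarefaction fan follows a uniformly chosen characteristic and is absorbed by a shock as soon as it meets one.

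First I would set up the coupling. Let $\eta^A$ be the single-species TASEP with the step initial condition (particles on $\Z_{\le0}$) and wall at $w(t)=cT+vt$, and let $\eta^B$ be the same process with the particle at $0$ removed (particles on $\Z_{\le-1}$), again with wall $w(t)$; couple them by the basic coupling, each process suppressing the jumps of its own right-most particle across $w(t)$. Since the wall only suppresses jumps and the right-most particle of $\eta^A$ stays weakly to the right of that of $\eta^B$, the usual attractiveness argument gives $\eta^A\ge\eta^B$ pointwise at all times and shows that the two configurations differ at exactly one site, which is precisely $\mathfrak{f}(t)$. Hence, writing $\#_A(z,t):=\#\{k:\,x^A_k(t)\ge z\}$ and $\#_B$ analogously,
\[
\Pb(\mathfrak{f}(T)\ge z)=\E\big[\#_A(z,T)-\#_B(z,T)\big].
\]
In case (b) the constraint is vacuous on $[0,T]$: for $t\le T$ one has $w(t)-t=cT-(1-v)t\ge(c+v-1)T\ge0$, so the right-most particle of either process never reaches the wall before time $T$, both evolve as unconstrained step TASEPs, and Ferrari--Kipnis~\cite{FK95} gives $\mathfrak{f}(T)/T\Rightarrow\tfrac12\,\mathrm{Unif}(-1,1)$. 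From now on I treat case (a).

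Next I would describe the macroscopic profile of $\eta^A$. The wall starts acting only at time $t_\ast=\tfrac{c}{1-v}T$, when the front of the step profile reaches it, and by finite speed of propagation its influence at time $t\le T$ is confined to a neighbourhood of the wall. The limiting density profile then consists of the block $\rho\equiv1$ for $x\le-t$, the rarefaction fan $\rho(x,t)=\tfrac12(1-x/t)$ for $-t<x<s(t)$, a ``jam'' of density $1-v$ (the density at which the particle speed $1-\rho$ equals the wall speed $v$) for $s(t)<x<w(t)$, and vacuum beyond $w(t)$, where $s(t)$ is a shock born at $(t_\ast,t_\ast)$. The Rankine--Hugoniot condition $\dot s=\frac{j(1-v)-j(\rho_-)}{(1-v)-\rho_-}$ with $j(\rho)=\rho(1-\rho)$ and $\rho_-=\tfrac12(1-s(t)/t)$ reduces, writing $s(t)=u(t)t$, to $\dot u\,t=-\tfrac12(u-2v+1)$, whence $u(t)-(2v-1)=A\,t^{-1/2}$; imposing $u(t_\ast)=1$ gives $A=2\sqrt{c(1-v)}\,\sqrt{T}$, so $s(T)/T=\zeta_s:=-1+2v+2\sqrt{c(1-v)}$ (one checks $\zeta_s<1$ precisely when $c<1-v$, the nondegenerate part of case (a)). I would make this rigorous either by invoking hydrodynamics of TASEP with a blocking barrier or, in the form needed here, from Proposition~\ref{PropStartingIdentity} (and the remark after it, which allows $f(0)=cT>0$) combined with the law of large numbers for the step initial condition as in Lemma~\ref{lemLocalStep}; this also supplies the quantitative estimates on $\#_A,\#_B$ used below.

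Finally I would read off the law of $\mathfrak{f}(T)/T$. Since $\eta^B$ with wall $w$ has the distribution of the unit left translate of $\eta^A$ with wall $w+1$, the identity for $\Pb(\mathfrak{f}(T)\ge z)$ telescopes to $\E[\eta^A(T,z)]$ plus a correction term comparing the processes $\eta^A$ with walls $w$ and $w+1$; by finite speed of propagation of the wall's influence and conservation of particles this correction vanishes in the limit for $z=\zeta T$ with $\zeta<\zeta_s$, so $\Pb(\mathfrak{f}(T)\ge\zeta T)\to\tfrac12(1-\zeta)$ for $\zeta\in(-1,\zeta_s)$ (and $\to1$ for $\zeta<-1$) by the profile above. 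It then remains to prove $\Pb(\mathfrak{f}(T)\ge\zeta T)\to0$ for $\zeta>\zeta_s$, i.e.\ that at time $T$ the second class particle is not strictly inside the jam. This is the heart of the argument: a second class particle cannot remain in the density-$(1-v)$ jam, since there it drifts left at asymptotic speed $1-2(1-v)=2v-1$, strictly slower than the local shock speed $\dot s(t)=(2v-1)+\tfrac{A}{2}t^{-1/2}$, so it is overtaken by $s(t)$ and, meeting the shock from its dense side, is absorbed and pinned to $s(T)=\zeta_sT+o(T)$; on the fan side the Ferrari--Kipnis argument applies because the region left of the shock is untouched by the wall. Consequently the limiting law $\mu$ of $\mathfrak{f}(T)/T$ obeys $\mu([\zeta,\infty))=\tfrac12(1-\zeta)$ on $(-1,\zeta_s)$ and $\mu([\zeta,\infty))=0$ for $\zeta>\zeta_s$; equivalently $\mu$ has density $\tfrac12$ on $(-1,\zeta_s)$, of total mass $v+\sqrt{c(1-v)}$, and an atom of mass $\tfrac12(1-\zeta_s)=1-v-\sqrt{c(1-v)}$ at $\zeta_s$, which is the asserted mixture. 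The main obstacle is precisely this last step: establishing rigorously, in the absence of translation invariance, that the fan part of the limit is the Ferrari--Kipnis uniform law truncated at $\zeta_s$ while the remaining mass is carried by the shock. For this I would use the comparison-with-stationarity estimates of Section~\ref{SectTightness}, now applied on the jam region, together with the standard monotone couplings for a second class particle meeting a shock.
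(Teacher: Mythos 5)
Your proposal takes a genuinely different route from the paper, and it contains an acknowledged gap at a point the paper's argument entirely avoids.

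You represent $\mathfrak f(T)$ as the discrepancy between two wall-constrained TASEPs $\eta^A,\eta^B$ under basic coupling, describe the hydrodynamic profile of the constrained system (fan, Rankine--Hugoniot shock, jam of density $1-v$, vacuum), and then invoke the Ferrari--Kipnis picture of a second class particle tracing a uniform characteristic and being absorbed by the shock. The paper takes the opposite tack: it never analyses the hydrodynamics of the \emph{constrained} process at all. Instead it applies Proposition~\ref{prop:inverTime} (the color-position / time-inversion symmetry of the multi-species TASEP) to convert $\Pb(\mathfrak f(T)\ge s)$ into $\Pb(\exists k:\ \tilde x_k(T)=s)$, where $\tilde x$ is a step TASEP whose wall now approaches from the \emph{left}. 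Coupling $\tilde x$ with the unconstrained step TASEP $x$, the paper observes that $\tilde x_k(T)=x_k(T)$ unless the macroscopic trajectory of $x_k$ dips below the left barrier, and the two cases $\hat s\lessgtr\zeta_s$ reduce to elementary comparisons of the law-of-large-numbers trajectory $t\mapsto\sqrt{\alpha_0}(\sqrt{\alpha_0}-2\sqrt{\alpha})T$ against the line $\hat s T-cT-v(T-t)$ (together with the density identity $\Pb(\exists k: x_k(T)=\lfloor\hat s T\rfloor)\to(1-\hat s)/2$). Nothing about constrained hydrodynamics, jams, or shock absorption is needed.

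The gap in your argument is exactly where you locate it, and it is more serious than a missing estimate. To conclude $\Pb(\mathfrak f(T)\ge\zeta T)\to 0$ for $\zeta>\zeta_s$ you need that the discrepancy particle is pinned at the (curved, time-inhomogeneous) shock $s(t)=(2v-1)t+2\sqrt{c(1-v)T}\,\sqrt t$ with overwhelming probability, and for $\zeta<\zeta_s$ you need that the wall's influence (and the related ``correction'' term $\E[\#_{A,w}(z+1,T)-\#_{A,w+1}(z+1,T)]$) is $o(1)$. The existing rigorous shock-absorption arguments you cite are built around stationary or translation-invariant two-density profiles; adapting them to a shock whose strength $\rho_+-\rho_-$ and speed both vary in time, fed on one side by a rarefaction fan and on the other by a wall-induced jam, is a significant piece of work, not a routine adaptation of Section~\ref{SectTightness}. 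This is precisely what the color-position symmetry buys: it trades the shock problem for a first-passage condition on an ordinary step TASEP, where only one-point law of large numbers bounds (Lemma~\ref{lemLocalStep}, uniformized as in Lemma~\ref{lemmaMacroPoints}) are needed. If you want to salvage the discrepancy approach you should be prepared to prove a quantitative shock-localization statement for this specific non-stationary profile; otherwise I would suggest adopting the reduction via Proposition~\ref{prop:inverTime}.
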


\begin{remark}
The second case of Theorem~\ref{th:sec-class} corresponds to the situation in
which the evolving particles
are essentially not affected by the wall; thus, the limiting distribution is
the same as in the classical
result of Ferrari-Kipnis~\cite{FK95}. In the first case, we see that the second
class particle is ``repelled''
by the wall, which creates an atom at the shock point in the limiting
distribution. The macroscopic shock
position is easy to derive: denote by $\xi T$ the position of the shock at time
$t=T$. Since the speed of
the wall is $v$ and the speed of particles with density $\rho$ is $1-\rho$ (in
the law of large numbers approximation), the particle density in the
shock region is going to be $1-v$. Then $\xi$ is determined by the requirement
that the particles that would be to the
right of $\xi T$ in the system without wall, a total of $(1-\xi)^2T/4$ of
them, now form a constant density
region between $\xi T$ and $(v+c)T$, i.e., $(1-\xi)^2 T/4=(1-v)(v+c-\xi)T$.
This gives
$\xi=-1+2v + 2 \sqrt{c(1-v)}$.
\end{remark}

Theorem~\ref{th:sec-class} is proved in Section~\ref{sec:proof2cl} below.

\subsection{Distribution of the second class particle}\label{sec:2class-distrEq}
In this section we relate the distribution of the second class particle in the TASEP with a moving wall to an observable of the single-species TASEP.

Consider the TASEP with (first class) particles and holes only. Its initial configuration is given by $\tilde x_k (0) = -k+1, \ k \ge 1$ (step initial configuration). Assume that at time $t$ the jumps are allowed to happen only at positions $\ge (s- cT - v(T-t))$; the jumps to the \textit{left} of such a moving wall are suppressed. Here $s$ is an arbitrary integer.

\begin{prop}
One has
\begin{equation}
\label{eq:interm2cl}
\Pb\left( \mathfrak{f} (T) \ge s \right) = \Pb\left( \mbox{there exists $k \ge 1$ such that $\tilde x_k (T)=s$} \right)
\end{equation}
\end{prop}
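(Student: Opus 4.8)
The plan is to set up a color-position (multi-species) coupling analogous to the one used in Proposition~\ref{PropStartingIdentity}, but now tailored to the second class particle rather than to a tagged first class particle. First I would represent the TASEP-with-second-class-particle-and-wall as a three-species system: give the particles at negative sites color $0$ (or colors $\le 0$, it does not matter as they are all indistinguishable for the purpose of tracking the second class particle), the second class particle at the origin color $1$, and the holes at positive sites color $+\infty$. Under the basic coupling, the position of the second class particle $\mathfrak{f}(t)$ is the location of the unique color-$1$ particle. The key elementary observation is the standard second class particle identity: $\{\mathfrak{f}(T) \ge s\}$ is exactly the event that, in the single-species TASEP obtained by treating colors $\le 1$ as particles, the $s$-th site from the right is occupied, \emph{and} in the single-species TASEP obtained by treating only colors $\le 0$ as particles, the same site is empty. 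Equivalently, $\{\mathfrak{f}(T)\ge s\}$ is the event that the positions of the ``$\le 0$'' TASEP and of the ``$\le 1$'' TASEP disagree at site $s$, which — because the second class particle lies weakly to the left in the coarser coupling — is precisely the event that the $\le 1$ TASEP has a particle at $s$ that the $\le 0$ TASEP lacks.

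Next I would translate the wall constraint. In the original process the wall starts at $cT$ and moves right at speed $v$, blocking the right-most color-$0$ particle; in the single-species reductions it blocks particles from moving past $cT + vt$. I want to compare this with the system $\tilde x$ of the Proposition, which has step initial data and, at time $t$, suppresses jumps \emph{to the left of} the moving wall $s - cT - v(T-t)$. The step initial configuration $\tilde x_k(0) = -k+1$ is exactly the ``all colors $\le 0$'' part after a particle-hole involution and a reflection $z \mapsto s - z$: under this reflection, ``there is a particle of the fine system at site $s$ that the coarse system lacks'' becomes ``particle $k$ of $\tilde x$ sits at site $0$ after reflection'', i.e. $\tilde x_k(T) = s$ for some $k$. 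The time-reversal of the wall is not needed here (unlike in Proposition~\ref{PropStartingIdentity} we are not inverting a permutation), but the \emph{reflection} of the wall profile is, and that is where the $s - cT - v(T-t)$ shows up: the wall at absolute position $cT + vt$ becomes, after $z \mapsto s - z$, the barrier $s - cT - vt$, and after relabeling $t \leftrightarrow T-t$ to match the step initial data it is $s - cT - v(T-t)$; the suppression of jumps ``past the wall'' turns into suppression of jumps ``to the left of the wall'' under the reflection, as claimed.

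Concretely the steps are: (i) write the multi-species realization with the wall and record $\mathfrak{f}(T) = $ position of color $1$; (ii) express $\{\mathfrak f(T)\ge s\}$ via the two single-species projections (colors $\le 0$ versus colors $\le 1$) and the basic-coupling monotonicity, obtaining $\{\mathfrak f(T)\ge s\} = \{$the $\le 1$ projection has a particle at some site $\ge s$ while the $\le 0$ projection has a hole there$\}$ — and, because there is exactly one extra particle in the fine projection, this is exactly $\{$the fine projection has a particle at $s\}\setminus\{$the coarse projection has a particle at $s\}$, which by monotonicity is simply $\{$fine has a particle at $s$, coarse does not$\}$; (iii) apply particle-hole duality and the coordinate change $z\mapsto s-z$ to both projections, checking that the fine projection becomes a step-initial-condition single-species TASEP with jumps suppressed to the left of $s - cT - v(T-t)$ — this is the process $\tilde x$ — and that ``fine has a particle at $s$'' becomes ``$\tilde x_k(T) = s$ for some $k\ge 1$'', while ``coarse has a particle at $s$'' is automatically ruled out on this event by the presence of the wall (exactly as in case (b) of the proof of Proposition~\ref{PropStartingIdentity}: once a $\tilde x$-particle would be blocked it stays weakly left of the wall forever, so it cannot be the right-most discrepancy). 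Step (iii) is the delicate bookkeeping step and the main obstacle: one must be careful that the reflection swaps ``left of the wall'' with ``right of the wall'', that the wall's affine profile transforms correctly under both the reflection and the $t\mapsto T-t$ relabeling, and that the initial condition really becomes the clean step configuration $\tilde x_k(0)=-k+1$ rather than something shifted by $s$ or by $cT$. Everything else is the by-now-standard second class particle / color-position dictionary.
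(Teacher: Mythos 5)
You have the right high-level framework (the multi-species coupling, the particle-hole involution, the spatial reflection), but two of your steps break down.

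First, your claim that ``the time-reversal of the wall is not needed here (unlike in Proposition~\ref{PropStartingIdentity} we are not inverting a permutation)'' is incorrect, and the subsequent ``relabeling $t \leftrightarrow T-t$'' is quietly doing precisely the work you say you do not need. The position of the second class particle is $\mathfrak{f}(T) = \mathrm{inv}\bigl(\hat\eta_{T;f}\bigr)(0)$, the inverse permutation evaluated at $0$, so you \emph{are} inverting a permutation. One cannot swap $t$ with $T-t$ in a TASEP realization as a free bookkeeping move: TASEP is not reversible, and the process run with wall profile $f(T-t)$ has a genuinely different law, path by path, from the one run with profile $f(t)$. What makes the distributional identity $\mathfrak{f}(T) \stackrel{d}{=} \eta_{T;f,T}(0)$ true is exactly Proposition~\ref{prop:inverTime}, the color-position symmetry; if you skip it, the wall profile $s - cT - v(T-t)$ appearing in the definition of $\tilde x$ has no derivation (without time reversal the reflected wall is $s - cT - vt$, not $s - cT - v(T-t)$).

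Second, your step (ii) collapses $\{\mathfrak{f}(T) \ge s\}$ to ``the fine projection has a particle at $s$ and the coarse one does not.'' Those are different events: the latter is $\{\mathfrak{f}(T) = s\}$, since the unique discrepancy site between the two coupled single-species projections is $\mathfrak{f}(T)$ itself. On the event $\{\mathfrak{f}(T) \ge s\}$ this discrepancy can lie anywhere in $[s,\infty)$; ``a discrepancy somewhere weakly right of $s$'' does not reduce to ``a discrepancy at $s$.'' Quantitatively $\Pb(\mathfrak{f}(T) \ge s)$ is of order one while $\Pb(\mathfrak{f}(T) = s)$ is of order $1/T$, and the right-hand side of \eqref{eq:interm2cl} is of order one. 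The paper's proof sidesteps the issue entirely: after Proposition~\ref{prop:inverTime}, the quantity of interest is the event $\{\eta_{T;f,T}(0) \ge s\}$ --- ``the color at position $0$ is at least $s$'' --- which under particle-hole involution and the shift $z \mapsto s - z$ translates directly into ``site $s$ is occupied in $\tilde x$,'' with no comparison of two projections at a single site required.
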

\begin{proof}

For the proof, we will use multi-species processes introduced in Section~\ref{proofStartingIdentity}, with a particular choice of the function $f(t) := cT+ vt$. We couple the process $\{\hat \eta_{t;f} (z) \}$ with a TASEP with one second class particle via identifying all particles with negative colors with the first class particles, the particle of color 0 with the second class particle, and all particles with positive colors with holes. Using Proposition~\ref{prop:inverTime} and this coupling, we obtain
\begin{equation}
\label{eq:2classInvers}
\Pb\left( \mathfrak{f} (T) \ge s \right) = \Pb\left( \eta_{T;f,T} (0) \ge s \right).
\end{equation}
Recall that in the process $\eta_{T;f,T}$ the wall at time $t \in [0,T]$ is at position $cT + v (T-t)$. In the right-hand side, we identify colors $\ge s$ with holes, and $<s$ with the first class particles, do the particle-hole involution and shift the coordinate axis by $s$. This provides a coupling with the process $\{ \tilde x_k (t) \}$ and completes the proof of the proposition.
\end{proof}

\newpage
\subsection{Asymptotics}\label{sec:proof2cl}
In this section we provide a proof of Theorem~\ref{th:sec-class}.
\begin{proof}[Proof of Theorem~\ref{th:sec-class}]
First, let us relate the process $\tilde x_k (t)$ from Section~\ref{sec:2class-distrEq} with the standard (without a moving wall) single-species TASEP $x_k(t)$ which starts from the same step initial configuration. Let us use the basic coupling for these two processes. We claim that if for a fixed $k \ge 1$ one has
\begin{equation}
x_k(t) \ge s- cT - v(T-t)  \mbox{ for all $t \in [0,T]$,}
\end{equation}
then $x_k (t) = \tilde x_k (t)$, for all $t$. Indeed, in this case the particle position $\tilde x_k (t)$ depends only on the non-suppressed jumps, and they coincide for both processes. In the opposite case, let
\begin{equation}
\tau_k:= \sup_{\tau \in \R_{\ge 0}} \left\{ x_k(t) \ge s- cT - v(T-t) \mbox{ for all $t \in [0,\tau]$}  \right\}.
\end{equation}
In words, $\tau_k$ is the moment when the particle $x_k (t)$ is ``caught'' by the wall moving from the left, and we have
\begin{equation}
\label{eq:caughtWall}
\tilde x_k (t) = s- cT - v(T-\tau_k) \mbox{ for all $t \in [\tau_k,T]$},
\end{equation}
since the particle $\tilde x_k (t)$ does not move after that moment. Note that all particles are split in two groups: There exists a (random) integer $L$ such that the particles $ \tilde x_1,  \tilde x_2, \dots, \tilde x_L$ are not ''caught'' by the wall and thus coincide with $x_1, x_2, \dots, x_L$ at all moments of time, while all particles with larger than $L$ labels satisfy \eqref{eq:caughtWall}.

Our goal is to analyse the limit behavior of the right-hand side of equation \eqref{eq:interm2cl}. Note that the particles in the process $\{ \tilde x_k \}$ which satisfy \eqref{eq:caughtWall} cannot occupy position $s$ since the wall is always to the left of $s$ during the time interval $[0,T]$. Thus, we have $\tilde x_k (T) = s$ if and only if
\begin{equation}
x_k (T) = s, \quad \mbox{and} \quad x_k(t) \ge s- cT - v(T-t) \mbox{ for all $t \in [0,T]$}.
\end{equation}
By the hydrodynamical (law of large numbers) limit for the TASEP with step initial condition (first obtained in~\cite{R81}), we have
\begin{equation}
\label{eq:2clHydro}
\lim_{T \to \infty} \Pb \left( \mbox{there exists $k \ge 1$ such that $x_k (T) = \lfloor \hat s T \rfloor$} \right) = \frac{1-\hat s}{2}, \qquad \hat s \in [-1,1].
\end{equation}
Set
\begin{equation}
\mathfrak{a}:= \left( \frac{1-\hat s}{2} \right)^2.
\end{equation}
Note that by \eqref{eqLLN} one has $x_{\lfloor \mathfrak{a} T \rfloor} (T) \approx \hat s T$ (here and below we use $\approx$ for denoting the asymptotic equivalence in the $T \to \infty$ limit). Furthermore, by \eqref{eqLLN} one has
\begin{equation}
x_{\lfloor \mathfrak{a} T \rfloor} (\alpha_0 T) \approx \sqrt{\alpha_0} \left( \sqrt{\alpha_0} - 1 + \hat s \right) T, \qquad \mbox{for $\alpha_0 \in [\mathfrak{a},1]$}.
\end{equation}
In words, with non-negligible probability the position $\lfloor \hat s T \rfloor$ can be occupied only by a particle with a number that asymptotically behaves as $\mathfrak{a} T$.
The asymptotic behavior of \eqref{eq:interm2cl} depends on whether the particles with such numbers were caught by the moving wall or not.

Let us consider two cases.

\medskip

\textbf{Case 1:} Assume that $-1 < \hat s < -1+2v + 2 \sqrt{c (1-v)}<1$. By straightforward calculus, one has
\begin{equation}
\alpha_0 - (1-\hat s) \sqrt{\alpha_0} > \hat s - c - v (1-\alpha_0) \quad \mbox{for all }\alpha_0 \in [\mathfrak{a},1].
\end{equation}
This implies that for such values of parameters there exist $\eps, \delta>0$ such that
\begin{equation}
\Pb\left(  x_{ \lfloor (\mathfrak{a}+\eps)T \rfloor} ( \alpha_0 T) > (\hat s T - cT - v(T-\alpha_0 T)) + \delta T \mbox{ for all }\alpha_0 \in [0,1] \right) = 1 -o(1).
\end{equation}
Analogously to Lemma~\ref{lemmaMacroPoints}, one can extend this to a uniform in time estimate
\begin{equation}
\Pb\left( x_{\lfloor (\mathfrak{a}+\eps)T \rfloor} (t) > (\hat s T - cT - v(T-t)) + \delta T \mbox{ for all }t \in [0,T] \right) = 1 -o(1).
\end{equation}
Therefore, the particle $x_{\lfloor (\mathfrak{a}+\eps)T \rfloor}$ is not caught by the moving wall with probability close to 1, which implies that all particles with numbers $\approx \mathfrak{a} T$ will not be caught by the moving wall  with probability close to 1. Thus, the limit of the probability in the right-hand side of \eqref{eq:interm2cl} is given by \eqref{eq:2clHydro}. We obtain
that for $-1 < \hat s < -1+2v + 2 \sqrt{c (1-v)}<1$, one has
\begin{equation}
\lim_{T \to \infty} \Pb\left( \frac{\mathfrak{f} (T)}{T} \ge \hat s \right) = \frac{1-\hat s}{2}.
\end{equation}

\medskip

\textbf{Case 2:} Assume that $1 > \hat s > -1+2v + 2 \sqrt{c (1-v)}> -1$. Then there exists $\alpha_0 \in [\mathfrak{a},1)$ such that
\begin{equation}
\alpha_0 - (1-\hat s) \sqrt{\alpha_0} < \hat s - c - v (1-\alpha_0).
\end{equation}
By the law of large numbers, this implies that for a sufficiently small $\eps>0$ the particle $x_{(\mathfrak{a} - \eps) T}$ will be caught by the moving wall with probability close to 1, which implies that all particles with numbers $\approx \mathfrak{a} T$ will be caught by the wall with overwhelming probability. Therefore, one has
\begin{equation}
\lim_{T \to \infty} \Pb\left( \frac{\mathfrak{f} (T)}{T} \ge \hat s \right) = 0,
\end{equation}
in this case.

This concludes the proof of Theorem~\ref{th:sec-class}.
\end{proof}

\newpage
\appendix

\section{Known results and estimates}

Let us start with a well-known result on the convergence of Poisson processes to the Brownian motion.
\begin{lem}\label{LemPPtoBM}
Let $Z(t)$ be the number of particles in $[0,t]$ of a Poisson process with intensity $\lambda$. Then
\begin{equation}
\lim_{t\to\infty}\frac{Z(\tau t)-\lambda\tau t}{\sqrt{t}} = B(\lambda \tau),
\end{equation}
in the sense of weak convergence in the sup-norm on finite intervals, where $B$ is a standard Brownian motion.
\end{lem}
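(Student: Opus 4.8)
The plan is to reduce the statement to the classical Donsker invariance principle. The process $Z$ has stationary and independent increments, and if we sample it at integer times we obtain a random walk: $Z(n)=\sum_{i=1}^n\xi_i$, where $\xi_i:=Z(i)-Z(i-1)$ are i.i.d.\ with the Poisson$(\lambda)$ law, hence with mean $\lambda$ and variance $\lambda$. Donsker's theorem then gives, for every fixed $M>0$, the weak convergence
\begin{equation}
\Big(\tfrac{Z(\lfloor t\sigma\rfloor)-\lambda t\sigma}{\sqrt t}\Big)_{0\le\sigma\le M}\ \Longrightarrow\ \big(\sqrt\lambda\,W(\sigma)\big)_{0\le\sigma\le M}
\end{equation}
in the Skorokhod space $D[0,M]$, where $W$ is a standard Brownian motion. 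Since the limiting process is a.s.\ continuous, convergence in $D[0,M]$ upgrades to weak convergence in $C[0,M]$ with the uniform topology, and $\sigma\mapsto\sqrt\lambda\,W(\sigma)$ has the same law as $\sigma\mapsto B(\lambda\sigma)$, matching the variance normalization $\operatorname{Var}(Z(t\sigma)-\lambda t\sigma)=\lambda t\sigma$.

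It remains to pass from the interpolated walk $Z(\lfloor t\sigma\rfloor)$ to $Z(t\sigma)$ itself, uniformly on $[0,M]$. For this I would bound
\begin{equation}
\sup_{0\le\sigma\le M}\big|Z(t\sigma)-Z(\lfloor t\sigma\rfloor)\big|\ \le\ \max_{0\le k\le\lceil tM\rceil}\big(Z(k+1)-Z(k)\big),
\end{equation}
and control the right-hand side using the Poisson tail bound $\Pb(\xi_1\ge m)\le e^{-cm}$ for $m$ large, together with a union bound over $O(tM)$ terms: this shows $\max_{0\le k\le\lceil tM\rceil}(Z(k+1)-Z(k))=O(\log t)$ with probability tending to $1$, hence is $o(\sqrt t)$. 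Therefore the interpolation error is negligible on the $\sqrt t$ scale uniformly on compacts, and Slutsky's lemma (in the functional form) completes the argument.

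As an alternative self-contained route, one can verify the two ingredients of weak convergence directly: finite-dimensional convergence follows from the multivariate central limit theorem applied to the independent increments $Z(t\sigma_{j+1})-Z(t\sigma_j)$, each of which is Poisson with mean $\lambda t(\sigma_{j+1}-\sigma_j)$; and tightness follows from a fourth-moment estimate, using $\E\big[(Z(t\sigma)-Z(t\tau)-\lambda t(\sigma-\tau))^4\big]=\lambda t|\sigma-\tau|+3\lambda^2t^2|\sigma-\tau|^2$, so that after dividing by $t^2$ one gets the bound needed for the Kolmogorov--Chentsov criterion (the linear term being harmless since the jumps have size $1/\sqrt t\to0$, one may equivalently invoke the $D[0,M]$ tightness criterion and then use continuity of the limit). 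The only mildly delicate point is this uniform (rather than one-point) control, but it is entirely routine; everything else is standard.
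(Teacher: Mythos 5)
The paper does not give a proof of Lemma~\ref{LemPPtoBM}; it is quoted in the appendix as a ``well-known result'' and used as a black box. Your argument is correct and is the standard route: reduce to Donsker's invariance principle for the embedded i.i.d.\ Poisson$(\lambda)$ increment walk, use Brownian scaling to identify $\sqrt{\lambda}\,W(\sigma)\stackrel{d}{=}B(\lambda\sigma)$, and control the interpolation error $\sup_{\sigma\le M}|Z(t\sigma)-Z(\lfloor t\sigma\rfloor)|$ by the maximal unit-step increment, which is $O(\log t)=o(\sqrt t)$ by a Poisson tail bound and a union bound over $O(t)$ intervals; also note that replacing $\lambda\lfloor t\sigma\rfloor$ by $\lambda t\sigma$ changes the centering by at most $\lambda$, which is negligible on the $\sqrt t$ scale. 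The only caveat, which you correctly flag yourself, is that in the alternative moment-based sketch the fourth-moment bound $\lambda t|\sigma-\tau|+3\lambda^2t^2|\sigma-\tau|^2$, after dividing by $t^2$, contains a term of order $|\sigma-\tau|/t$ which does \emph{not} satisfy the $|\sigma-\tau|^{1+b}$ form required by the classical Kolmogorov--Chentsov criterion; one must either restrict to mesh $|\sigma-\tau|\gtrsim 1/t$ and use the vanishing jump size, or work with a $D$-space tightness criterion and invoke continuity of the limit, as you indicate. So the second route is sound only with that extra care; the first route via Donsker is clean and complete.
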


To localize the starting points of the backwards paths we use estimates for the one-point distribution of tagged particles for step and stationary initial conditions.
\begin{lem}\label{LemOnePointBounds}
For any $\alpha\in (0,1)$,
\begin{equation}\label{eqOnePointLimitLaw}
\lim_{T\to\infty}\Pb(x_{\alpha t}(t)\geq (1-2\sqrt{\alpha})t-s c_1(\alpha) t^{1/3}) = F_{\rm GUE}(s),
\end{equation}
with $c_1(\alpha)=\frac{(1-\sqrt{\alpha})^{2/3}}{\alpha^{1/6}}$. There exists constants $C,c>0$ such that, uniformly for all $t$ large enough,
\begin{equation}\label{eqOnePointUpperTail}
\Pb(x_{\alpha t}(t)\leq (1-2\sqrt{\alpha})t-s c_1(\alpha) t^{1/3}) \leq  C e^{-c s},\quad s>0,
\end{equation}
and
\begin{equation}\label{eqOnePointLowerTail}
\Pb(x_{\alpha t}(t)\geq (1-2\sqrt{\alpha})t-s c_1(\alpha) t^{1/3}) \leq  C e^{-c |s|^{3/2}},\quad -o(t^{2/3})\lesssim s<0.
\end{equation}
The constants $C,c$ can be chosen uniformly for $\alpha$ in a closed subset of $(0,1)$.
\end{lem}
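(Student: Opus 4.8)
The plan is to transfer the statement to exponential last passage percolation (LPP) via the classical dictionary between the step-initial-condition TASEP and the corner-growth model, and then invoke known asymptotics and deviation bounds. Recall that if $L(a,b)$ denotes the last passage time from $(1,1)$ to $(a,b)$ in $\Z_{\ge 1}^2$ with i.i.d.\ $\mathrm{Exp}(1)$ weights (with the convention $L(a,b)=0$ when $a\le 0$ or $b\le 0$), then $\Pb(x_N(t)\ge p)=\Pb\big(L(N,N+p-1)\le t\big)$ for every $N\ge 1$ and $p\in\Z$ (a quick sanity check: for $N=1$ this is the Poisson--Gamma duality, since the first particle moves as a rate-$1$ Poisson process and $L(1,p)$ is a sum of $p$ independent $\mathrm{Exp}(1)$'s). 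I would set $N=\lfloor\alpha t\rfloor$ and $p=\lfloor(1-2\sqrt\alpha)t-s\,c_1(\alpha)t^{1/3}\rfloor$; then the relevant box has side lengths $\asymp\alpha t$ and $\asymp(1-\sqrt\alpha)^2 t$, its deterministic shape value $(\sqrt a+\sqrt b)^2$ equals $t$ to leading order, and a short Taylor expansion shows that shifting the target by $s\,c_1(\alpha)t^{1/3}$ shifts the typical value of $L$ by $\tfrac{s\,c_1(\alpha)}{1-\sqrt\alpha}t^{1/3}+o(t^{1/3})$, with $L$-fluctuations of order $t^{1/3}$. Carrying out this expansion is exactly what pins down the constant $c_1(\alpha)=(1-\sqrt\alpha)^{2/3}\alpha^{-1/6}$ and the $F_{\rm GUE}$ scaling.

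With the dictionary in place, \eqref{eqOnePointLimitLaw} is Johansson's one-point limit theorem~\cite{Jo00b} for $L$ over a box whose aspect ratio is bounded. For the tails, one notes that under the dictionary the event in \eqref{eqOnePointUpperTail} (with $s>0$) translates into an \emph{upper}-tail event for $L$ --- a passage time exceeding its typical value by $\Theta(s)$ units of the $t^{1/3}$ fluctuation scale --- while the event in \eqref{eqOnePointLowerTail} (with $s<0$) translates into a \emph{lower}-tail event, a passage time falling short of its typical value by $\Theta(|s|)$ such units. I would then quote the standard uniform moderate- and large-deviation estimates for exponential LPP (see e.g.~\cite{Jo00b,BFP09,FO17} and references therein): the upper tail is bounded by $Ce^{-cr}$ for all $r\ge 0$ (and by $Ce^{-cr^{3/2}}$ in the moderate regime $r=o(t^{2/3})$), and the lower tail is bounded by $Ce^{-cr^{3/2}}$ for $r=o(t^{2/3})$ (in fact by $Ce^{-cr^{3}}$). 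Substituting $r=\Theta(s)$ then yields \eqref{eqOnePointUpperTail} and \eqref{eqOnePointLowerTail}; the exponents $1$ and $3/2$ appearing in the lemma are deliberately weaker than optimal, chosen so that the bounds hold with uniform constants over the whole stated range of $s$ (respectively over all $s>0$ and over $-o(t^{2/3})\lesssim s<0$), which is all the applications require.

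The uniformity in $\alpha$ over closed subsets of $(0,1)$ comes for free: there $\sqrt\alpha$ stays bounded away from $0$ and $1$, so the box aspect ratio $\alpha/(1-\sqrt\alpha)^2$ stays bounded away from $0$ and $\infty$, and the constants in the cited LPP estimates can be taken uniform over such a family of boxes. I do not expect any substantive obstacle; the only real work is bookkeeping --- writing the particle--LPP identity with the correct off-by-one conventions, performing the Taylor expansion that identifies $c_1(\alpha)$ and matches both the mean and the fluctuation scale, and quoting the LPP deviation estimates in the precise uniform-in-parameters form asserted here. The mildly delicate point, if any, is ensuring the moderate-deviation bounds being invoked are stated for $r$ growing all the way up to $o(t^{2/3})$, which is exactly the range in which \eqref{eqOnePointLowerTail} is claimed.
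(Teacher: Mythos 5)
Your proposal is correct and follows essentially the same route as the paper: translate the tagged-particle event to exponential last passage percolation via the standard identity $\Pb(x_n(t)\ge m-n)=\Pb(L_{m,n}\le t)$ (equation \eqref{eq4} of the paper), quote Johansson's theorem for the Tracy--Widom limit, and quote uniform moderate/large-deviation bounds for the LPP upper and lower tails, noting the weaker advertised exponents and the $o(t^{2/3})$ restriction needed to keep the box aspect ratio bounded. The paper cites \cite{BBP06}, \cite{FN13}, \cite{BFP12} and \cite{FO17} for precisely the tail inputs you invoke, so aside from the admitted off-by-one bookkeeping in the particle--LPP dictionary, there is no substantive difference.
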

Similar statements have been derived in the framework of the (directed) last passage percolation in a quadrant with exponential weights. If we denote by $L_{m,n}$ the last passage time from the origin to the point $(m,n)$, then to translate the results to the TASEP particle positions we use the relation
\begin{equation}\label{eq4}
\Pb(x_n(t)\geq m-n)=\Pb(L_{m,n}\leq t).
\end{equation}
\eqref{eqOnePointLimitLaw} was proven in Theorem~1.6 of~\cite{Jo00b}. Since the distribution function \eqref{eq4} is given by a Fredholm determinant, the upper tail \eqref{eqOnePointUpperTail} is easily obtained from the exponential tail of the kernel either in the TASEP representation, or in the LPP representation. For a statement for the upper tail in LPP one can go back for instance to the work on Laguerre ensembles~\cite{BBP06}; for explicit statements on the tails see e.g.\ Section~4.1 of~\cite{FN13}, Lemma~1 of~\cite{BFP12}. The lower tail in LPP was proven in~\cite{BFP12} (Proposition~3 in combination with (56)). Applying \eqref{eq4} with $n=\alpha t$ and $m=(1-\sqrt{\alpha})^2 t-s\tilde c_1(\alpha) t^{1/3}$ we get the result. The condition $-s=o(t^{2/3})$ is to ensure that $\eta=n/m$ stays bounded away from $0$ and $\infty$. Presumably, using the approach of~\cite{BFP12} directly with the kernel of TASEP particles, this restriction would not appear. However since the bound is sufficient for our purpose, we do not investigate this further.

\begin{lem}\label{LemOnePointStat}
Consider the stationary TASEP with density $\rho$, with right-to-left labeling $x_{n+1}(t)<x_n(t)$ such that at time $t=0$, $x_1(0)<0\leq x_0(0)$.
Let $N=\rho^2 t-2 w \rho \chi^{1/3} t^{2/3}$, with $\chi=\rho(1-\rho)$. Then
\begin{equation}\label{eqOnePointLimitLawStat}
\lim_{t\to\infty} \Pb\left(x^{\rho}_N(t)\geq (1-2\rho)+2 w \chi^{1/3} t^{2/3}-(1-\rho)\chi^{-1}s t^{1/3}\right)= F_{{\rm BR},w}(s),
\end{equation}
where $F_{{\rm BR},w}$ is the Baik-Rains distribution function with parameter $w$. Furthermore, there exists constants $C,c>0$ such that
\begin{equation}\label{eqOnePointUpperTailStat}
\Pb(x^{\rho}_N(t)\leq (1-2\rho)+2 w \chi^{1/3} t^{2/3}-(1-\rho)\chi^{-1}s t^{1/3}) \leq  C e^{-c s},\quad s>0,
\end{equation}
and
\begin{equation}\label{eqOnePointLowerTailStat}
\Pb(x^{\rho}_N(t)\geq (1-2\rho)+2 w \chi^{1/3} t^{2/3}-(1-\rho)\chi^{-1}s t^{1/3}) \leq C e^{-c |s|^{3/2}},\quad -o(t^{2/3})\lesssim s<0
\end{equation}
uniformly for all $t$ large enough. The constants $C,c$ can be chosen uniformly for $\rho$ in a closed subset of $(0,1)$.
\end{lem}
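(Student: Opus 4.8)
The plan is to transfer everything to last passage percolation (LPP) with boundary sources, exactly as in the proof of Lemma~\ref{lemLocalStat}, and then to combine a known limit theorem with a comparison to the step initial condition. Let $L^{\rho}_{m,n}$ denote the LPP time from the origin to $(m,n)$ with weights $\mathrm{exp}(1)$ on $\{(i,j)\colon i,j\ge 1\}$, $\mathrm{exp}(1-\rho)$ on $\{(i,0)\colon i\ge 1\}$, $\mathrm{exp}(\rho)$ on $\{(0,j)\colon j\ge 1\}$ and $\omega_{0,0}=0$; for the stationary TASEP with density $\rho$ and the labelling of the statement one has $\Pb(x^{\rho}_n(t)\ge m-n)=\Pb(L^{\rho}_{m,n}\le t)$, up to a harmless $\Or(1)$ shift of indices in the definition of $L^{\rho}$. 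With $N=\rho^2 t-2w\rho\chi^{1/3}t^{2/3}$ and the level appearing in \eqref{eqOnePointLimitLawStat}, the corresponding point $(m,N)$ lies on the characteristic line through the origin up to an $\Or(t^{2/3})$ transversal displacement parametrised by $w$, and then \eqref{eqOnePointLimitLawStat} is precisely Theorem~1.6 of~\cite{BFP09}. For a general label $N$ the statement follows from the centred case by translation invariance of the stationary measure, which only shifts the target point.

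For the lower tail \eqref{eqOnePointLowerTailStat} I would use monotonicity in the boundary weights: dropping them can only decrease passage times, so $L^{\rho}_{m,n}\ge L_{m,n}$ with $L_{m,n}$ the step LPP time of \eqref{eq4}, and hence $\Pb(L^{\rho}_{m,n}\le t)\le \Pb(L_{m,n}\le t)$. Translating back via \eqref{eq4}, the event on the left-hand side of \eqref{eqOnePointLowerTailStat} is contained in $\{x_n(t)\ge(1-2\sqrt{n/t})\,t+\Or(t^{1/3})-(1-\rho)\chi^{-1}s\,t^{1/3}\}$ for the step TASEP with $n=N$ (so $n/t\to\rho^2$), and this is exactly of the type bounded by \eqref{eqOnePointLowerTail}. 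A one-line computation using $c_1(\rho^2)=(1-\rho)^{2/3}\rho^{-1/3}$ shows that the coefficient $(1-\rho)\chi^{-1}=\rho^{-1}$ of the $t^{1/3}$ term turns the deviation $|s|$ into $\chi^{-2/3}|s|\ge|s|$ standard deviations on the step side, so \eqref{eqOnePointLowerTail} gives the claimed bound $C e^{-c|s|^{3/2}}$, with constants that stay bounded for $\rho$ in a closed subset of $(0,1)$; the restriction $-s=o(t^{2/3})$ is inherited from \eqref{eqOnePointLowerTail}.

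For the upper tail \eqref{eqOnePointUpperTailStat} I would use the exit-point decomposition of $L^{\rho}_{m,n}$: it equals the maximum, over the exit point $i$ (resp.\ $j$) from the $x$-axis (resp.\ $y$-axis), of the boundary partial sum $\sum_{k\le i}\omega_{k,0}$ (resp.\ $\sum_{k\le j}\omega_{0,k}$) plus the step-type passage time from $(i,1)$ (resp.\ $(1,j)$) to $(m,n)$, together with a union bound over the exit point. For exit points within $\Or(t^{2/3})$ of the characteristic one, one combines Gaussian concentration of the boundary partial sums with the right tail of step LPP (the LPP form of \eqref{eqOnePointLowerTail}). For exit points macroscopically away, or mesoscopically but $\gg t^{2/3}$ away, the law of large numbers yields a deficit of order $t$, respectively of order $i^2/t$, between the mean of boundary-sum-plus-passage-time and $t$; summing the $\Or(t)$ contributions produces the exponential bound $C e^{-cs}$. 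Alternatively one may simply quote the finite-time exit-point estimates for stationary LPP already recorded in~\cite{BFP12} and~\cite{FN13} (ultimately resting on the Laguerre/spiked asymptotics of~\cite{BBP06}). In every case the constants depend continuously on $\rho$ and $w$, hence can be taken uniform on compact subsets, and the general-$N$ case follows by translation invariance as above. The main obstacle is the near-characteristic regime of the exit-point argument: since the typical exit point sits on the very boundary of the admissible range, one must pin down the location of the maximising exit point at the $t^{2/3}$ scale and control the bulk passage time from this random point uniformly — this is where attractiveness/comparison together with the superexponential step-LPP tails are genuinely needed, and it is the only delicate point; the rest is bookkeeping.
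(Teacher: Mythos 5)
Your proposal follows essentially the same route as the paper: the limit law is quoted from Theorem~1.6 of~\cite{BFP09}, the lower tail is reduced to the step-initial-condition bound \eqref{eqOnePointLowerTail} via the monotonicity $L^{\rho}_{m,n}\ge L_{m,n}$ (equivalently, the basic-coupling inequality $x_N(t)\ge x_N^{\rho}(t)$ the paper uses, with the same $s\mapsto w^2+s\chi^{-2/3}$ rescaling you compute, up to the harmless $w^2$ shift you omit), and the upper tail is decomposed by exit side/point and discharged by citing the stationary-LPP estimates in~\cite{FO17} and~\cite{BBP06}. The extra exit-point sketch you give before falling back on the citation is a more explicit unpacking of what the cited references do, but does not change the logical structure.
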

Again the results have been proven already in the last passage percolation framework and mapped back to TASEP. \eqref{eqOnePointLimitLawStat} was proven in Theorem~1.6 of~\cite{BFP09}. \eqref{eqOnePointLowerTailStat} follows from \eqref{eqOnePointLowerTail}, because if we couple the step and stationary initial condition through the basic coupling, then $x_N(t)\geq x_N^\rho(t)$. Thus
\begin{equation}
\Pb(x_N^\rho(t)\geq x)\leq \Pb(x_N(t)\geq x).
\end{equation}
With the choice $x=(1-2\rho)+2 w \chi^{1/3} t^{2/3}-(1-\rho)\chi^{-1}s t^{1/3}$ and $N=\rho^2 t-2 w \rho \chi^{1/3} t^{2/3}$, this corresponds to taking in Lemma~\ref{eqOnePointLimitLaw}, $\alpha=\rho^2-2w\rho \chi^{1/3} t^{-1/3}$ and $s\to w^2+s/\chi^{2/3}$. The shift of $w^2$ and the scaling $\chi^{2/3}$ just lead to different constants.

Finally, to get \eqref{eqOnePointUpperTailStat} one can do the following: consider $x^{\rho,\rm right}$ starting with density $1$ on $\Z_-$ and density $\rho$ (Bernoulli) on $\N$ and $x^{\rho,\rm left}$ starting with density $\rho$ on $\Z_-$ and empty on $\N$. Then $x_n^\rho(t)=\min\{x_n^{\rho,\rm left}(t),x_n^{\rho,\rm right}(t)\}$, cf.\ \eqref{eqLeftRightDecomposition}, so that \mbox{$\Pb(x_n^\rho(t)\leq x)\leq \Pb(x_n^{\rho,\rm left}(t)\leq x)+\Pb(x_n^{\rho,\rm right}(t)\leq x)$}. These last two distribution functions can be written as Fredholm determinants, and the analysis of the kernels would give the desired result. For those initial conditions, the bounds for the last passage percolation model have been already proven by using the estimates of the kernels provided in~\cite{BBP06}, see Lemma~3.3 of~\cite{FO17}.


\end{document}